\documentclass[11pt]{amsart}
\usepackage{amsmath}
\usepackage{amsfonts}
\usepackage{latexsym}
\usepackage{amssymb}
\usepackage[dvips]{graphics}

\newcommand{\Z}{{\mathbb Z}}

\newcommand{\R}{{\mathbb R}}

\newcommand{\Sp}{{\mathbf S}}

\newcommand{\kk}{{\mathbf k}}
\newcommand{\cat}{{\rm {cat }}}
\newcommand{\Cat}{{\rm {Cat}}}

\newcommand{\Hom}{{\rm Hom}}
\newcommand{\im}{\rm im}

\newcommand{\comment}[1]{}


\def\ker{{\rm Ker }}
\def\im{{\rm Im }}

\def\diam{{\rm diam }}

\def\liminv{\lim\limits_\leftarrow}
\def\limone{{\lim\limits_\leftarrow}^1}

\newtheorem{theorem}{Theorem}
\newtheorem{proposition}{Proposition}
\newtheorem{lemma}[proposition]{Lemma}
\newtheorem{corollary}[proposition]{Corollary}

\newtheorem*{theorem*}{Theorem}
\theoremstyle{definition}
\newtheorem{definition}{Definition}
\newtheorem{example}{Example}
\newtheorem{remark}{Remark}

\parindent0em
\parskip0.5ex
\begin{document}
\title{Closed 1-forms in topology and geometric group theory}

\author{Michael Farber, Ross Geoghegan and Dirk Sch\"utz}
\address{Department of Mathematics, University of Durham, Durham DH1 3LE, UK}
\email{michael.farber@durham.ac.uk}

\address{Department of Mathematics, SUNY Binghamton, NY 13902-6000, USA}
\email{ross@math.binghamton.edu}

\address{Department of Mathematics, University of Durham, Durham DH1 3LE, UK}
 \email{dirk.schuetz@durham.ac.uk}



\date{\today}


\begin{abstract}
In this article we describe relations of the topology of closed 1-forms to the group theoretic invariants of Bieri-Neumann-Strebel-Renz. Starting with a survey, we extend these Sigma invariants to finite CW-complexes and show that many properties of the group theoretic version have analogous statements. In particular we show the relation between Sigma invariants and finiteness properties of certain infinite covering spaces. We also discuss applications of these invariants to the Lusternik-Schnirelmann category of a closed 1-form and to the existence of a non-singular closed 1-form in a given cohomology class on a high-dimensional closed manifold.
\end{abstract}

\maketitle

\begin{center}
{\it To S.P. Novikov on the occasion of his 70-th birthday}
\end{center}
\vskip 1cm

\section*{Introduction}
The last three decades have seen a growing interest in the topology of closed 1-forms ever since S.P. Novikov \cite{N1, noviko} introduced Morse theoretic techniques to study classical problems in mathematical physics. In analogy to the Morse-Smale complex of an ordinary Morse function on a closed manifold, he constructed a chain complex, now called the Novikov complex, associated to a closed 1-form whose singularities are non-degenerate in the sense of Morse. While Novikov's interest was to study such problems as Kirchhoff type equations \cite{noviko,N4,NSm}, other applications in different areas of mathematics would soon become apparent. Gradient vector fields of closed 1-forms, for example, give rise to fascinating results in dynamical systems, and some of the topics arising this way have been covered in a recent monograph \cite{farbook} and survey article \cite{fasc}.

Another quite surprising link has been made to geometric group theory, when it became clear that the work of Sikorav \cite{sikora}, who applied Novikov theory to symplectic topology, was closely related to the work of Bieri, Neumann, Strebel and Renz \cite{binest,bieren}. More specifically, the geometric invariants $\Sigma^k(G)$ of a group $G$, which contain important information on the finiteness properties of certain subgroups of $G$ and whose definition is recalled in Section \ref{basic}, can be described in terms of vanishing results of a generalized Novikov homology.

The feature that combines these areas is that closed 1-forms represent cohomology classes $\xi\in H^1(X;\R)$. If $X$ is a smooth manifold this is a special case of de Rham theory, and the first-named author has developed a theory of closed 1-forms on topological spaces for which this result holds more generally, see \cite{farber}. Now if $X$ is connected, $H^1(X;\R)$ can be identified with the set of homomorphisms ${\rm Hom}(\pi_1(X),\R)$, where $\R$ is considered as a group with the usual addition. Indeed, the invariants $\Sigma^k(G)$ can be viewed as subsets of the unit sphere in ${\rm Hom}(G,\R)$.

One purpose of this article is to describe these relations in a general setting, and to develop the theory of Bieri-Neumann-Strebel-Renz using the language of the topology of closed 1-forms. In particular, we extend the notion of Sigma invariants to finite CW-complexes. These invariants $\Sigma^k(X)$ for $k\geq 1$ are defined as generalizations of the group theoretic versions, and they have similar properties. This generalization is motivated by the fact that the group theoretic invariants regularly occur in the topology of closed 1-forms. For example, the condition $\xi\in \Sigma^2(\pi_1(M))$ appears implicitly in the work of Latour \cite{latour} as a necessary condition for the existence of a non-singular closed 1-form in $\xi\in H^1(M;\R)$, where $M$ is a high-dimensional closed manifold ($\dim M\geq 6$). Another necessary condition of \cite{latour}, which in fact implies $\xi\in \Sigma^2(\pi_1(M))$, is the contractibility of a certain function space. It turns out that contractibility of this space is equivalent to $\xi\in \Sigma^k(M)$ for all $k\geq 1$. We remark that for compact 3-manifolds the condition $\xi\in \Sigma^1(\pi_1(M))$ is sufficient for the existence of a non-singular closed 1-form representing $\xi$, see \cite{binest}.

Another property of these new Sigma invariants is that they reflect finiteness properties of infinite abelian covering spaces $q:\overline{X}\to X$. By an abelian covering we mean a regular covering with $\pi_1(X)/\pi_1(\overline{X})$ an abelian group. Denote
\begin{eqnarray*}
S(X,\overline{X})&=&\left\{0\not=\xi:\pi_1(X)\to\R\,\left|\,\xi|_{\pi_1(\overline{X})}=0\right.\right\}.
\end{eqnarray*}

\begin{theorem*}\label{maintheorem}
Let $X$ be a finite connected CW-complex and $q:\overline{X}\to X$ a regular covering  space with $\pi_1(X)/\pi_1(\overline{X})$ abelian. For $k\geq 1$, the following properties are equivalent.
\begin{enumerate}
\item $\overline{X}$ is homotopy equivalent to a CW-complex with finite $k$-skeleton.
\item $S(X,\overline{X})\subset \Sigma^k(X)$.
\end{enumerate}
Furthermore, if $S(X,\overline{X})\subset\Sigma^{\dim X}(X)$, then $\overline{X}$ is finitely dominated.
\end{theorem*}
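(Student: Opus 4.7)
The strategy is to translate everything into properties of the cellular chain complex $C_*(\overline{X})$, regarded as a chain complex of free $\Z[H]$-modules where $H=\pi_1(X)/\pi_1(\overline{X})$ is finitely generated abelian, and then follow the outline of the Bieri--Neumann--Strebel--Renz proof for groups. Every $\xi\in S(X,\overline{X})$ factors uniquely as $\xi=\bar\xi\circ p$ with $\bar\xi\in \Hom(H,\R)$ nonzero, and the Novikov completion used to define $\Sigma^k(X)$ restricts, on $C_*(\overline{X})$, to the abelian Novikov completion $\widehat{\Z[H]}_{\bar\xi}$. Under this dictionary, the condition $\xi\in\Sigma^k(X)$ becomes the vanishing of $H_i(C_*(\overline{X})\otimes_{\Z[H]}\widehat{\Z[H]}_{\bar\xi})$ for $i\le k$, and both implications of the theorem become statements about $C_*(\overline{X})$ over $\Z[H]$.

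For (1)$\Rightarrow$(2), I would replace $\overline{X}$ up to homotopy by a CW-complex $Y$ with finite $k$-skeleton. Then $C_*(Y)$ is a chain equivalent model for $C_*(\overline{X})$ that is finitely generated free through degree $k$. A standard Novikov acyclicity argument, exploiting the fact that $\bar\xi$-positive elements of $\Z[H]$ become units in $\widehat{\Z[H]}_{\bar\xi}$, produces the required vanishing up to degree $k$, so $\xi\in \Sigma^k(X)$.

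The heart of the proof is (2)$\Rightarrow$(1), which I would deduce from a chain-level analogue of the Bieri--Renz criterion: a chain complex of free $\Z[H]$-modules is chain equivalent through degree $k$ to one that is finitely generated in those degrees if and only if $H_i(C_*\otimes_{\Z[H]}\widehat{\Z[H]}_{\bar\xi})=0$ for $i\le k$ and every nonzero $\bar\xi\in \Hom(H,\R)$. The delicate part, which I expect to be the main obstacle, is the nontrivial direction of this criterion. For each $\bar\xi$, the Novikov vanishing gives a \emph{local} factorization certifying partial finite generation in a neighborhood of $\bar\xi$ on the character sphere $S(H)\subset\Hom(H,\R)\setminus\{0\}$; compactness of $S(H)$ reduces the problem to finitely many such local witnesses, which one then patches into a globally finite $k$-skeleton. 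This compactness-plus-patching step is the chain-level reflection of the BNSR argument, and getting it to work over the ring $\Z[H]$ rather than in a group resolution is the main technical point.

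For the final statement, apply (2)$\Rightarrow$(1) with $k=\dim X$ to obtain a CW-complex $Y\simeq\overline{X}$ whose $(\dim X)$-skeleton is finite. Since $\overline{X}$ is a CW-complex of dimension $\dim X$, its cellular chain complex $C_*(\overline{X})$ is concentrated in degrees $\le \dim X$; truncating the chain equivalent model $C_*(Y)$ at dimension $\dim X$ and comparing with $C_*(\overline{X})$, the kernel of the truncated top differential is a finitely generated projective $\Z[H]$-module, and one obtains a finite complex of finitely generated projective $\Z[H]$-modules chain equivalent to $C_*(\overline{X})$. Wall's finiteness obstruction theorem then realizes $\overline{X}$ as a retract of a finite CW-complex, i.e., as finitely dominated.
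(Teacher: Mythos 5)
Your reduction sets up the wrong dictionary, and the error is fatal rather than cosmetic. You propose to work with $C_*(\overline{X})$ as a chain complex of free $\Z[H]$-modules, $H=\pi_1(X)/\pi_1(\overline{X})$, and to encode condition (1) as finite generation of this complex through degree $k$. But since $X$ is a finite complex and $\overline{X}\to X$ is regular with deck group $H$, the complex $C_*(\overline{X})$ is \emph{already} finitely generated free over $\Z[H]$ in \emph{every} degree, so the criterion you want to prove is vacuous at this level and carries no information about whether $\overline{X}$ has a finite $k$-skeleton up to homotopy. The correct chain-level formulation, and the one the paper actually uses (Theorem \ref{dominator}), concerns $C_*(\tilde{X})$ as a $\Z G$-complex, $G=\pi_1(X)$ and $\tilde{X}$ the universal cover: condition (1) corresponds, via Wall's theorems, to $C_*(\tilde{X})$ being of finite $k$-type over $\Z N$ with $N=\pi_1(\overline{X})$ (together with finite presentability of $N$ when $k\geq 2$), while $\Sigma^k(X,\Z)$ is about finite $k$-type over $\Z G_\xi$, equivalently vanishing of $H_i(X;\widehat{\Z G}_{-\xi})$ by Proposition \ref{sigmanov}. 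The $\widehat{\Z[H]}_{\bar\xi}$-homology of $C_*(\overline{X})$ that you propose to use is the ``abelian'' Novikov homology treated in Section \ref{movhomnotion}; it is a genuinely different and weaker invariant, and its vanishing does not control $\Sigma^k(X)$.

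A second, independent gap is that you conflate the homotopical $\Sigma^k(X)$ with the homological $\Sigma^k(X,\Z)$. The theorem is stated for $\Sigma^k(X)$, which is defined by movability of skeleta (Definition \ref{sigmaX}), not by Novikov vanishing; the two agree in degree $1$, but the inclusion $\Sigma^k(X)\subset\Sigma^k(X,\Z)$ can be strict for $k\geq 2$ (Corollary \ref{homvshom} and the Bestvina--Brady remark following it), so no purely chain-level Novikov criterion can detect $\Sigma^k(X)$ by itself. The paper's actual route is: prove the chain-level theorem over $\Z G$ (Theorem \ref{dominator}), with a double-complex argument over a Bieri--Renz resolution of $\Z$ by $\Z Q_\xi$-modules for (1)$\Rightarrow$(2), and a compactness-on-$S(G,N)$ plus Lebesgue-number plus explicit norm-estimate argument for (2)$\Rightarrow$(1), yielding a $\Z N$-finite domination of $C_*(\tilde{X})$; then upgrade to the homotopical statement via Wall's CW-finiteness theorems, using $\Sigma^k(X)\subset\Sigma^2(X)\subset\Sigma^2(\pi_1(X))$ and the BNSR theory to supply finite presentability of $\pi_1(\overline{X})$. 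Your compactness-and-patching instinct is the right mechanism for the hard direction, but the ring and the chain complex you apply it to are the wrong ones, and the homotopical-versus-homological gap is not addressed at all.
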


The proof of this theorem is given in Section \ref{mainth}.

A common feature in the definitions and techniques is the notion of movability of subsets of a space $X$. Here movability is meant with respect to a closed 1-form $\omega$ representing a cohomology class $\xi\in H^1(X;\R)$. Roughly, movability of a set $A\subset X$ means that there is a homotopy $H$ of $A$ into $X$ starting with the inclusion and such that for every point $a\in A$ the integral of $\omega$ along the path $t\mapsto H_t(a)$ is large. While homological versions, using chain homotopies, of this appeared already in Bieri and Renz \cite[Thm.C]{bieren}, a topological version was formulated in a quite different context in developing a Lusternik-Schnirelmann theory for closed 1-forms, see \cite{farber,farbe4,farbook,farkap,fasc}. Due to the similar nature one expects a closer relation, which we derive in Section \ref{category}. A movability notion for homology classes is developed in Section \ref{movhomnotion}, which has applications to cup-length estimates for the Lusternik-Schnirelmann theory of a closed 1-form.

This article is written as a companion to the recent survey article \cite{fasc} which was focussing on applications of closed 1-forms in dynamical systems. The present paper contains a significant amount of new material, although parts of it are also meant as a survey.

\section{Bieri-Neumann-Strebel-Renz Invariants}\label{basic}
Let $G$ be a finitely generated group. We want to recall the definition of the Bieri-Neumann-Strebel-Renz invariants $\Sigma^k(G;\Z)$, introduced in \cite{binest,bieren}. We denote
\begin{eqnarray*}
S(G)&=& (\Hom(G,\R)-\{0\})/\R_+,
\end{eqnarray*}
that is, we identify nonzero homomorphisms, if one is a positive multiple of the other. This is a sphere of dimension $r-1$, where $r$ denotes the rank of the abelianization of $G$. We will identify $S(G)$ with the unit sphere in $\Hom(G,\R)$ (after choosing an inner product on the latter) and simply write $\xi\in S(G)$.

Given $\xi\in S(G)$, we denote
\begin{eqnarray*}
\Z G_\xi&=&\left\{\left.\sum_{g\in G} n_gg\in \Z G \,\right|\, n_g=0 \mbox{ for }\xi(g)<0\right\},
\end{eqnarray*}
a subring of $\Z G$.

We say that the trivial $\Z G$-module $\Z$ is of type $FP_k$ over $\Z G_\xi$, if there exists a resolution
\begin{equation}\label{resol}
 \ldots \longrightarrow F_i\longrightarrow F_{i-1} \longrightarrow \ldots \longrightarrow F_0\longrightarrow \mathbb{Z}\longrightarrow 0
\end{equation}
of $\Z$ by free $\Z G_\xi$-modules with each $F_i$ finitely generated for $i\leq k$.

\begin{definition}
 The \em Bieri-Neumann-Strebel-Renz invariants \em are now defined as
\begin{eqnarray*}
 \Sigma^k(G;\Z)&=&\{\xi\in S(G)\,|\,\Z \mbox{ is of type }FP_k\mbox{ over }\Z G_\xi\}.
\end{eqnarray*}
\end{definition}

The power of these invariants lies in the fact that they are closely related to the finiteness properties of subgroups of $G$. Let us recall the relevant finiteness properties.

\begin{definition}
For $k\geq 1$ a group $G$ is of type $FP_k$, if there is a resolution (\ref{resol}) of $\Z$ by free $\Z G$-modules with each $F_i$ finitely generated for $i\leq k$. Also, we say that $G$ is of type $F_k$, if there is an Eilenberg-MacLane space for $G$ with finite $n$-skeleton.
\end{definition}

We get that $G$ of type $F_k$ implies type $FP_k$ by looking at the cellular chain complex of the universal cover of the Eilenberg-MacLane space, and type $FP_1$ is equivalent to type $F_1$ which simply means finitely generated. But type $FP_2$ does not imply finitely presented, as the examples of Bestvina and Brady show \cite{besbra}. For more information on these finiteness properties see Brown \cite{brown} and Geoghegan \cite{geoghe}.

Notice that $\Z G$, when viewed as a $\Z G_\xi$-module for any $\xi$, is a direct limit of free $\Z G_\xi$-modules. It is therefore a flat $\Z G_\xi$-module. Furthermore, for every $\Z G$-module $A$ we have $\Z G\otimes_{\Z G_\xi}A\cong A$. Thus if $\Sigma^k(G;\Z)\not=\emptyset$ for some $k\geq 1$, we can apply $\Z G\otimes_{\Z G_\xi}-$ to the resolution (\ref{resol}) for some $\xi\in \Sigma^k(G;\Z)$, to get that $G$ is itself of type $FP_k$.

The following theorem, which we generalize in Section \ref{mainth}, relates finiteness properties of certain subgroups to the invariants.

\begin{theorem}[Bieri-Renz, \cite{bieren}]\label{bierenth}
 Let $G$ be a group of type $FP_k$, $N$ a subgroup of $G$ containing the commutator subgroup of $G$. Then $N$ is of type $FP_k$ if and only if $\Sigma^k(G;\Z)$ contains the subsphere $S(G,N)=\{\xi\in S(G)\,|\, N\leq \ker \,\xi\}$.
\end{theorem}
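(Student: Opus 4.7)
The approach hinges on the fact that, since $N \supseteq [G,G]$, the quotient $Q = G/N$ is a finitely generated abelian group and $S(G,N)$ is the unit sphere in $\Hom(Q,\R) \subseteq \Hom(G,\R)$. For $\chi \in S(G,N)$, $\chi$ is constant on cosets of $N$, so $\Z N \subseteq \Z G_\chi$, and $\Z G_\chi$ is free as a right $\Z N$-module on one representative from each coset $gN$ with $\chi(g) \geq 0$. This structural observation drives both implications.

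For the forward implication (assume $N$ is of type $FP_k$), fix $\chi \in S(G,N)$ and take a partial free resolution $P_\bullet \to \Z$ over $\Z N$ that is finitely generated through dimension $k$. Inducing along the flat inclusion $\Z N \hookrightarrow \Z G_\chi$ yields a finitely generated partial free $\Z G_\chi$-resolution of $\Z G_\chi \otimes_{\Z N} \Z \cong \Z[Q_\chi]$, where $Q_\chi = \{qN \in Q : \chi(q) \geq 0\}$. Since $Q$ is finitely generated abelian, a Koszul-type construction shows $\Sigma^k(Q;\Z) = S(Q)$, so a finite partial $\Z Q_\chi$-resolution of $\Z$ exists; pulling this back along the ring surjection $\Z G_\chi \twoheadrightarrow \Z Q_\chi$ and splicing with the first resolution produces the required finite partial $\Z G_\chi$-resolution of $\Z$, so $\chi \in \Sigma^k(G;\Z)$.

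For the converse, assume $S(G,N) \subseteq \Sigma^k(G;\Z)$ and aim to show $N$ is of type $FP_k$. Starting from a partial $\Z G$-resolution $F_\bullet \to \Z$ that is finitely generated through dimension $k$ (available since $G$ is $FP_k$), restriction to $\Z N$ gives a resolution whose modules are free over $\Z N$ of generally infinite rank (indexed by $G/N$). The task is to trim this to finite type. I would apply a Brown-style $FP_k$-criterion for $N$, combined with Shapiro's lemma to convert $\Z N$-homology questions into $\Z G$-homology with induced coefficients, and use the $\Sigma^k$-hypothesis at each $\chi \in S(G,N)$ to produce finiteness witnesses over each monoid ring $\Z G_\chi$. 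Compactness of the sphere $S(G,N)$ together with the (known) openness of $\Sigma^k(G;\Z)$ in $S(G)$ allows me to pass to finitely many $\chi_1, \ldots, \chi_m \in S(G,N)$, and a chain-level gluing of the resulting finite partial $\Z G_{\chi_j}$-resolutions should assemble into the required finite $\Z N$-resolution.

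The main obstacle I foresee is this last gluing step. The hypothesis provides a wealth of local-in-$\chi$ finiteness data, one finite resolution per point of $S(G,N)$, and the challenge is to patch these into a single finite-type chain complex over $\Z N$. This is where I expect to need the deepest input from the Bieri-Neumann-Strebel-Renz theory—openness of $\Sigma^k$, compactness of $S(G,N)$, and a careful control of the boundary operators across the finitely many monoid rings $\Z G_{\chi_j}$—to reduce the \emph{a priori} uncountable family of resolutions to a finite collection that can be spliced together consistently.
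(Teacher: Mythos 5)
Your forward direction matches the paper's strategy (the paper proves the chain-complex generalization, Theorem \ref{dominator}, and this is the argument there specialized to a resolution of $\Z$). Briefly: $\Z G_\chi$ is free over $\Z N$ on coset representatives $gN$ with $\chi(g)\geq 0$, so inducing a finite partial $\Z N$-resolution $P_\bullet\to\Z$ up to $\Z G_\chi$ gives a finite partial $\Z G_\chi$-resolution of $\Z Q_\chi$; then one forms a double complex with a finite $\Z Q_\chi$-resolution of $\Z$ (the paper cites Bieri--Renz Lemma~5.2 for this, which is what your ``Koszul-type construction'' amounts to) and takes the total complex. What you call ``splicing'' has to mean exactly this double-complex totalization (the $E_p$, viewed as $\Z G_\chi$-modules, are not projective, so you must resolve each by copies of $\Z G_\chi\otimes_{\Z N}P_\bullet$); spelled out it is precisely the $F^k$, $K^i$, $TP$, $TE$ machinery of the paper.

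The converse is where your proposal genuinely falls short, and the gap is more than a technicality you could patch by ``careful control.'' Your conceptual picture --- one finite partial resolution per $\chi_j$, over the ring $\Z G_{\chi_j}$, which you then glue across rings into a $\Z N$-resolution --- is not what the argument does and, as framed, would not work: a finitely generated $\Z G_{\chi_j}$-module restricted to $\Z N$ is typically infinitely generated, so there is no way to assemble finite-$\Z N$-type output from finite-$\Z G_{\chi_j}$-type pieces by a module-level gluing. The actual mechanism in the paper never leaves the single given complex $C$ (your $F_\bullet$). One puts a norm $\|\cdot\|$ on $Q_\R=\R\otimes(G/N)$ and extends it to $C$ via supports with respect to a fixed $\Z G$-basis. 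The $\Sigma$-hypothesis at each $\chi$ furnishes a $\Z G$-chain map $\varphi^\chi$ homotopic to the identity that increases the $\chi$-valuation by $\geq 2r$; openness plus compactness reduce to finitely many $(\varphi^i,H^i)$ covering $S(G,N)$ as you say. The crucial next step is to \emph{cut and paste these on the $\Z N$-basis $TX$ of $C$}: for a basis element $tx$, set $\psi(tx)=tx$ if $\|tx\|\leq A$ and $\psi(tx)=\varphi^i(tx)$ where $i$ is chosen by the direction of $\pi(\supp\,tx)$ in $S(G,N)$ (the paper's Lemma~\ref{pushin}). This produces a chain map $\psi$ and homotopy $K$ that are only $\Z N$-linear (the choice of $i$ depends on the coset, which is exactly why the linearity degrades from $\Z G$ to $\Z N$), with $\psi$ the identity on a ball and strictly norm-decreasing outside it. Iterating $\psi$ yields a $\Z N$-chain map $\zeta$ homotopic to the identity with image in a \emph{fixed} bounded-norm region (Lemma~\ref{almost}); the subcomplex $D$ of elements with $\|z\|\leq A+nM$ is then finitely generated free over $\Z N$ and dominates $C$, and Wall/Ranicki converts domination into chain homotopy equivalence. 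So the finiteness over $\Z N$ comes from a geometric bound on supports of a single self-map of $C$, not from patching resolutions over the $\Z G_{\chi_j}$. This is the missing idea in your proposal, and it is the heart of the proof.
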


There also exists a version of Theorem \ref{bierenth} which gives a criterion for $N$ to be of type $F_k$, involving a homotopical invariant $\Sigma^k(G)$. We will see more about this invariant below. Another result, proven in \cite{bieren} is that all $\Sigma^k(G;\Z)$ are open subsets of $S(G)$.

In \cite{binest} it was shown that even the particular case $k=1$ has very important applications to group theory.

\begin{theorem}[Bieri-Neumann-Strebel, \cite{binest}]
 Let $G$ be a finitely presented group without non-abelian free subgroups. Then
\begin{eqnarray*}
 \Sigma^1(G;\Z)\cup -\Sigma^1(G;\Z)&=&S(G).
\end{eqnarray*}
\end{theorem}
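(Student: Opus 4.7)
I would prove the contrapositive: assume $G$ is finitely presented and that there exists $\xi \in S(G)$ with neither $\xi$ nor $-\xi$ lying in $\Sigma^1(G;\Z)$, and exhibit a non-abelian free subgroup of $G$. The first step is to replace the module-theoretic definition of $\Sigma^1$ with the equivalent Cayley-graph definition of \cite{binest}: fix a finite generating set $S$ of $G$, let $\Gamma=\Gamma(G,S)$ be the Cayley graph, and recall that $\xi\in\Sigma^1(G;\Z)$ if and only if the full subgraph $\Gamma_\xi$ spanned by the vertices $\{g\in G : \xi(g)\geq 0\}$ is connected. The hypothesis then becomes the concrete statement that both $\Gamma_\xi$ and $\Gamma_{-\xi}$ are disconnected; this combinatorial form is the one amenable to a ping-pong argument.

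Next I would use finite presentability to upgrade the bare disconnections to quantitative Schottky-type barriers. Fix a finite presentation $\langle S \mid R\rangle$ and the associated Cayley 2-complex; since each relator has bounded $\xi$-defect, any loop in the 2-complex dips below height zero by at most a constant depending only on $R$. Combined with the disconnection of $\Gamma_\xi$, this produces vertices arbitrarily high in $\xi$-height whose components in $\Gamma_\xi$ remain separated by a height-zero slice, and not merely by some accidental detour. Translating such witnesses by high powers of an element $t\in G$ with $\xi(t)>0$ yields pairwise disjoint obstruction patches deep inside the positive half-space, and the assumption $-\xi\notin\Sigma^1(G;\Z)$ gives the analogous structure on the negative side.

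Finally I would select two elements $a,b\in G$, built as conjugates of the witnessing elements, whose positive and negative iterates push any vertex into prescribed disjoint regions $A^+,A^-,B^+,B^-$ of $\Gamma$; the classical ping-pong lemma then forces $\langle a,b\rangle$ to be free of rank two, contradicting the hypothesis. The principal obstacle will be arranging genuine disjointness of the four Schottky regions: finite presentability is essential precisely here, because without a uniform bound on relator length arbitrarily long relators could bridge the nominal disconnections and spoil the disjointness of the translated patches. Once $a$ and $b$ are correctly tuned using the height bound from $R$, the ping-pong verification itself is routine.
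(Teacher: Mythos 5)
The paper itself gives no proof here; the theorem is simply cited from \cite{binest}, so the comparison must be against the BNS argument. Your outline takes a genuinely different route and contains a genuine gap, one you yourself flag in the closing paragraph. Passing to the contrapositive and to the Cayley-graph criterion for $\Sigma^1$ is fine (modulo the standard identification $\Sigma^1(G)=\Sigma^1(G;\Z)$ in degree one), and the observation that relators in a finite presentation have uniformly bounded $\xi$-drop is indeed where finite presentability enters. But nothing after that is actually carried out: the elements $a,b$ and the four regions $A^\pm,B^\pm$ are never exhibited, and ``arranging genuine disjointness of the four Schottky regions'' is not a routine tuning step --- it is the whole content of the theorem. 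Disconnection of $\Gamma_\xi$ and $\Gamma_{-\xi}$ is a very weak hypothesis; it does not by itself hand you elements that displace the graph in a ping-pong-compatible way, and translating ``witness'' vertices by powers of a single $t$ with $\xi(t)>0$ gives no control over how arbitrary group elements move points between the putative regions.

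The structural input that is missing, and that BNS supply, is a splitting theorem: for $\chi$ with $\chi(G)\cong\Z$, finite presentability together with $[\chi],[-\chi]\notin\Sigma^1(G)$ forces $G$ to decompose as a non-ascending HNN extension $G=\langle B,t\mid tAt^{-1}=A'\rangle$ over a finitely generated subgroup of $\ker\chi$, with both $A$ and $A'$ proper in $B$. The rank-two free subgroup then falls out of Britton's lemma, i.e.\ ping-pong on the Bass--Serre tree of this splitting rather than on the Cayley graph; the properness of $A$ and $A'$ on both sides is precisely the rigorous substitute for your unconstructed Schottky regions. There is also a separate reduction needed when $\chi$ has non-discrete image, which your outline does not address. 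Without this HNN decomposition --- or some equivalent structural result --- your argument does not close.
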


Here $-\Sigma^1(G)$ denotes the image of $\Sigma^1(G)$ under the antipodal map.

We now want to give a more geometrical interpretation of these invariants, for the moment we will confine ourselves to the case $k=1$ and for simplicity we will assume that $G$ is finitely presented. Let $X$ be a finite CW-complex with $\pi_1(X)\cong G$ and let $q:\overline{X}\to X$ be the universal abelian covering. Given a non-zero homomorphism $\xi:G\to \R$ we can build a map $h:\overline{X}\to \R$ with $h(gx)=\xi(g)+h(x)$ for all $g\in G$ and $x\in \overline{X}$ by induction over the skeleta of $\overline{X}$. Note that $G$ acts on $\overline{X}$ by covering transformations, with the commutator subgroup acting trivially. Write $N=h^{-1}([0,\infty))$, then $N$ need not be connected, but has a unique component on which $h$ is unbounded, see \cite[Lm.5.2]{binest}. In the result below we assume that the basepoint of $N$ is chosen in this component.

\begin{theorem}[Bieri-Neumann-Strebel, \cite{binest}]\label{definv}
We have $\xi\in\Sigma^1(G;\Z)$ if and only if $i_\#:\pi_1(N)\to\pi_1(\overline{X})$ is an epimorphism, where $i:N\to\overline{X}$ is the inclusion.
\end{theorem}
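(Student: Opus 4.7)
The plan is to pass through a finite presentation 2-complex for $G$, construct $h$ explicitly on this model, and reconcile the algebraic definition of $\Sigma^1(G;\Z)$ with the topological condition on $\pi_1(N)$ via the original geometric reformulation of $\Sigma^1$ in \cite{binest}.

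Setup: Take $X$ the presentation 2-complex of a finite presentation $G=\langle S\mid R\rangle$. Then $\overline{X}$ has 0-skeleton canonically $H=G/[G,G]$, 1-skeleton the ``abelianised Cayley graph'' $\overline{\Gamma}$ with an edge labelled $s\in S$ from $\bar{g}$ to $\bar{g}+\bar{s}$ above each $\bar{g}\in H$, and 2-cells obtained by lifting each relator loop to every 0-cell. Define $h:\overline{X}\to\R$ affinely on cells by $h(\bar{g})=\xi(g)$; consistency on 2-cells follows from $\xi$ factoring through $H$ and annihilating each relator. The 1-skeleton of $N=h^{-1}[0,\infty)$ is then exactly the full subgraph $\overline{\Gamma}_\xi\subset\overline{\Gamma}$ on $\{\bar{g}:\xi(g)\geq 0\}$. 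The key bridge I would invoke is the original geometric form of the BNS invariant: $\xi\in\Sigma^1(G;\Z)$ if and only if the full subgraph $\Gamma_\xi\subset\Gamma(G,S)$ spanned by $\{g\in G:\xi(g)\geq 0\}$ is path-connected.

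Forward direction. Assume $\Gamma_\xi$ is connected. Given $\alpha\in\pi_1(\overline{X},x_0)$, represent it by an edge-loop in $\overline{\Gamma}$ via a word $w=s_1^{\eps_1}\cdots s_n^{\eps_n}$ whose image in $G$ lies in $[G,G]$. Conjugate $\alpha$ by (the lift of) a very $\xi$-positive element $h_0$, using connectedness of $\Gamma_\xi$ to realise a positive path from $1$ to $h_0$; the translated loop sits in the high-$h$ region, and a standard combing of $w$ against the fixed positive normal forms $\pi(g_i)$ for the partial products produces an equivalent word (equal to $w$ in $G$) whose partial products are all in $\xi^{-1}[0,\infty)$. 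The equality is witnessed by a van Kampen diagram over $R$ which lifts to a disk of 2-cells in $\overline{X}$ realising a free homotopy, after the initial translation entirely inside $N_+$. Conjugating back along the positive connecting path produces a loop at $x_0$ in $N_+$ representing $\alpha$, so $i_\#$ is epi.

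Reverse direction (contrapositive). Suppose $\Gamma_\xi$ is disconnected, with some $g\in\xi^{-1}[0,\infty)$ separated from $1$ inside $\Gamma_\xi$. I would exhibit an element of $\pi_1(\overline{X})=[G,G]$---naturally a commutator involving $g$ and a carefully chosen partner---whose any positive-prefix representation, once its van Kampen filling is lifted through the covering $\Gamma\to\overline{\Gamma}$, would yield a positive path from $1$ to $g$ in $\Gamma_\xi$, contradicting the separation. Surjectivity of $i_\#$ would supply the positive-prefix loop and hence the contradiction. The delicate and main obstacle here is the lifting step: positive paths in the quotient $\overline{\Gamma}_\xi$ do not automatically lift to positive paths in $\Gamma_\xi$, because the $[G,G]$-action on $\Gamma$ preserves $\xi$-values but reshuffles sequences of partial products. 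The mechanism forcing the lift is the availability of the relator 2-cells required by the supposed homotopy, which translate into rewritings in $G$ via the presentation; executing this rewriting carefully, tracking how each commutator in $G^{\rm ab}$ is forced to be expressible as a commutator of positive elements, is the technical heart of the proof and is the argument carried out in \cite{binest}.
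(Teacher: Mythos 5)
The paper does not prove Theorem \ref{definv}; it is quoted from \cite{binest} without argument, so your proposal must stand on its own. Granting your bridge ($\xi\in\Sigma^1(G;\Z)$ iff $\Gamma_\xi$ is connected --- a nontrivial theorem from the same source but logically independent of the statement at hand), the forward direction you give is more elaborate than necessary and not fully justified as written. If $\alpha\in\pi_1(\overline{X},x_0)$ corresponds to $c\in[G,G]$ under the identification $\pi_1(\overline{X})\cong[G,G]$, then $\xi(c)=0$, so $1$ and $c$ are both vertices of $\Gamma_\xi$; any edge-path between them in $\Gamma_\xi$ projects under $\Gamma\to\overline{\Gamma}$ to a loop at $x_0$ lying in $N$ (indeed in the unbounded component $N'$, since $\overline{\Gamma}_\xi$ is connected and contains $x_0$) and representing $\alpha$, and one is done. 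Your detour through conjugating by a highly positive $h_0$, ``combing'', and a lifted van Kampen filling is unneeded and, as stated, does not clearly close up: conjugation changes the group element unless you also return along a connecting path, and you do not control the partial products on the return leg (one cannot in general choose the path from $1$ to $h_0$ in $\Gamma_\xi$ to have monotone $\xi$-values). You also never attend to the distinction between $N$ and the unbounded component $N'$, which the paper explicitly flags just before the statement.

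The reverse direction is a genuine gap. You correctly identify the obstruction --- positive paths in $\overline{\Gamma}_\xi$ need not lift to positive paths in $\Gamma_\xi$, because the $[G,G]$-action permutes partial products while preserving $\xi$-values --- and then write that resolving it ``is the argument carried out in \cite{binest}.'' But that resolution is precisely the content of the implication ($i_\#$ epi $\Rightarrow$ $\Gamma_\xi$ connected, equivalently $\Rightarrow\xi\in\Sigma^1(G;\Z)$); deferring it to the very reference you are asked to reprove leaves the proposal with one direction established (via an unnecessarily roundabout route) and the other only asserted.
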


This geometric criterion is closely related to a concept of movability of a subset of $X$ with respect to a given $\xi$ and which has recently been studied in connection with a Lusternik-Schnirelmann theory of such $\xi$, see \cite{farber,farbook,fasc}. Let us recall the definition of a closed 1-form on a topological space $X$.

\begin{definition}
 A \em continuous closed 1-form $\omega$ on a topological space $X$ \em is defined as a collection $\{f_U\}_{U\in\mathcal{U}}$ of continuous real-valued functions $f_U:U\to \R$ where $\mathcal{U}=\{U\}$ is an open cover of $X$ such that for any pair $U,V\in\mathcal{U}$ the difference
\[
 f_U|_{U\cap V}-f_V|_{U\cap V}:U\cap V\to \R
\]
is a locally constant function. Another such collection $\{g_V\}_{V\in\mathcal{V}}$ (where $\mathcal{V}$ is another open over of $X$) defines \em an equivalent \em closed 1-form if the union collection $\{f_U,g_V\}_{U\in\mathcal{U},V\in\mathcal{V}}$ is a closed 1-form, i.e., if for any $U\in\mathcal{U}$ and $V\in \mathcal{V}$ the function $f_U-g_V$ is locally constant on $U\cap V$.
\end{definition}

These closed 1-forms behave in the same way as smooth closed 1-forms on manifolds; they can be integrated along paths $\gamma:[a,b]\to X$, and integration along loops defines a homomorphism $\xi_\omega:\pi_1(X)\to\R$. Furthermore, every such homomorphism can be realized by a closed 1-form. See \cite[\S 3]{fasc} for details.

\begin{example}
A continuous function $f:X\to S^1$ determines a closed 1-form in the following way. Think of $S^1$ as $\R/\Z$ and let $p:\R\to\R/\Z$ be the projection. If $I=(a,b)\subset \R$ is an open interval with $b-a\leq 1$, then $I$ is homeomorphic to the open subset $p(I)\subset \R/\Z$ via $p$. The collection 
\begin{eqnarray*}
\omega&=&\left\{(p|_I)^{-1}\circ f|_{f^{-1}(p(I))}:f^{-1}(p(I))\to I\right\}
\end{eqnarray*}
then defines a closed 1-form. Furthermore, $\xi_{\omega}:\pi_1(X)\to\R$ can be identified with $f_\#:\pi_1(X)\to \pi_1(S^1)$, if the standard generator of $\pi_1(S^1)$ is identified with $1\in \R$.
\end{example}

\begin{definition}
 Let $X$ be a finite connected CW-complex, $G=\pi_1(X)$ and $\omega$ a closed 1-form on $X$. A subset $A\subset X$ is called \em $n$-movable with respect to $\omega$ and control $C\geq0$ \em (where $n$ is an integer), if there is a homotopy $H:A\times [0,1]\to X$ such that $H(a,0)=a$ or all $a\in A$, and
\begin{eqnarray*}
 \int_a^{H_1(a)}\omega &\geq & n
\end{eqnarray*}
and
\begin{eqnarray*}
 \int_a^{H_t(a)}\omega &\leq&-C
\end{eqnarray*}
for all $a\in A$ and $t\in[0,1]$. Here the integral is taken over the path $s\mapsto H(a,s)$ for $s\in [0,t]$.
\end{definition}

This notion of movability has its roots in the Lusternik-Schnirelmann theory of a closed 1-form, compare \cite{farber,fasc}. The next Proposition shows that it also gives a criterion for $\Sigma^1(G;\Z)$.

\begin{proposition}\label{movewithcontrol}
 Let $X$ be a finite connected CW-complex, $G=\pi_1(X)$ and $\xi:G\to \R$ a homomorphism which is represented by a closed 1-form $\omega$. Then the following are equivalent.
\begin{enumerate}
\item $\xi\in\Sigma^1(G;\Z)$.
\item There is a $C\geq 0$ such that the 1-skeleton $X^{(1)}\subset X$ is $n$-movable with respect to $\omega$ and control $C$ for every $n>0$.
\end{enumerate}
\end{proposition}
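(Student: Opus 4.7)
The plan is to work on the universal abelian cover $q\colon \overline X \to X$ and apply Theorem~\ref{definv}. Since $[G,G] \leq \ker\xi$, the pullback $q^*\omega$ is exact, so there is a continuous $h\colon \overline X \to \R$ with $q^*\omega = dh$ and $h(g\tilde x) = h(\tilde x) + \xi(g)$; consequently $\int_\gamma \omega = h(\tilde\gamma(1)) - h(\tilde\gamma(0))$ for any lift $\tilde\gamma$ of $\gamma$. Movability of $X^{(1)}$ with parameter $n$ and control $C$ is thus equivalent to the existence of a $G$-equivariant homotopy $\overline H\colon \overline X^{(1)} \times I \to \overline X$ with $\overline H_0 = \mathrm{id}$, $h(\overline H_1(\tilde a)) - h(\tilde a) \geq n$, and $h(\overline H_t(\tilde a)) - h(\tilde a) \geq -C$. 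By Theorem~\ref{definv}, condition $(1)$ translates into surjectivity of $i_\#\colon \pi_1(N) \to \pi_1(\overline X)$, where $N$ is the unbounded component of $h^{-1}[0,\infty)$.

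For $(2) \Rightarrow (1)$: fix a base 0-cell $\tilde x_0 \in N$ with $h(\tilde x_0) \geq C$ (possible since bounded components of $h^{-1}[0,\infty)$ have bounded $h$, while $N$ does not). Represent any $g \in \pi_1(\overline X)$ by a loop $\tilde\gamma$ in $\overline X^{(1)}$ through $\tilde x_0$ via cellular approximation, and apply the lifted movability homotopy for $n > -\min_s h(\tilde\gamma(s))$. The loop $\tilde\gamma' := \overline H_1 \circ \tilde\gamma$ then has $h > 0$, and the basepoint track $\alpha(t) := \overline H_t(\tilde x_0)$ satisfies $h \geq 0$; both stay in $N$ by connectedness to $\tilde x_0$. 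The free homotopy gives $[\tilde\gamma] = [\alpha \cdot \tilde\gamma' \cdot \alpha^{-1}]$ in $\pi_1(\overline X, \tilde x_0)$ with the right-hand loop in $N$, so $i_\#$ is surjective.

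For $(1) \Rightarrow (2)$: the plan is to construct $\overline H$ cellularly and $G$-equivariantly on $\overline X^{(1)}$, separately for each $n$ with a single control constant. On 0-cells, pick one lift $\tilde v_i \in N$ per $G$-orbit (possible as $N$ has $h$-values unbounded above) and a path $\tilde\alpha_i^{(n)}$ in $N$ from $\tilde v_i$ with $h$-gain $\geq n$; since the path stays in $N$ where $h \geq 0$, the required control is $\max_i h(\tilde v_i)$, independent of $n$. Extend equivariantly. For each orbit of 1-cells pick a representative $\tilde e$ with endpoints $\tilde u_j = g_j \tilde v_{i_j}$ ($j=0,1$); the sides of the square $\tilde e \times I$ are then forced by equivariance to be $g_j \tilde\alpha_{i_j}^{(n)}$.

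The hard part will be extending over each 1-cell with uniform $h$-control. The plan is to use the surjectivity of $i_\#$, conjugated to the appropriate basepoint, to select the top of the square as a path in $\overline X$ from $g_0\tilde\alpha_{i_0}^{(n)}(1)$ to $g_1\tilde\alpha_{i_1}^{(n)}(1)$ so that, combined with $\tilde e$ and the two sides, it produces a null-homotopic loop in $\overline X$; concretely, by composing a fixed auxiliary path $\beta_e$ from $\tilde u_0$ to $\tilde u_1$ (one per orbit) with the two sides, and correcting by a loop in a translate of $N$ chosen via the surjection to kill the $\pi_1(\overline X)$-obstruction. The resulting null-homotopy is a disk in $\overline X$ whose compact image has some finite $h$-infimum $-C_e$; taking the maximum of the $C_e$ over the finitely many 1-cell orbits yields a single constant $C$ valid for every $n$, completing the argument.
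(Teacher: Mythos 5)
Your $(2)\Rightarrow(1)$ direction is essentially the paper's argument, just phrased with the basepoint track $\alpha$ instead of the paper's cutoff function $\mu$; both produce a loop in the unbounded component of $h^{-1}([0,\infty))$ representing the given class in $\pi_1(\overline X)$, and Theorem~\ref{definv} then applies. That part is fine.

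The $(1)\Rightarrow(2)$ direction has a real gap in exactly the place you flag as ``the hard part.'' You build a \emph{separate} homotopy for each $n$, with side paths $\tilde\alpha_i^{(n)}$ of $h$-gain $\geq n$, and then for each 1-cell orbit you choose a top path and a filling disk, finally declaring the $h$-infimum of that disk to be a constant $-C_e$ and taking a maximum over orbits. But the boundary loop that the disk must fill changes with $n$ (the two sides get longer and its top endpoints rise by about $n$), so the filling disk changes with $n$, and nothing in your construction bounds its $h$-infimum independently of $n$. Surjectivity of $i_\#$ only tells you the obstruction in $\pi_1(\overline X)$ can be killed by a loop in a translate of $N$; it gives no a priori bound on how far down the eventual null-homotopy dips. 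As written, $C_e = C_e(n)$, and the ``single constant valid for every $n$'' is asserted rather than proved.

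The paper sidesteps this by never trying to hit a target height $n$ directly. It constructs one equivariant \emph{cellular} homotopy $H:X^{(1)}\times[0,1]\to X$ (via $\overline X$) that achieves $1$-movability: $\int_a^{H_1(a)}\omega\geq 1$ and $\int_a^{H_t(a)}\omega\geq -C$ for some fixed $C$. Because $H$ is cellular, $H_1(X^{(1)})\subset X^{(1)}$, so $H$ can be concatenated with itself. A short computation shows that for the $m$-fold iterate $H^{\ast m}$ one still has $\int_a^{H^{\ast m}_t(a)}\omega\geq -C$ (the lower excursion of each later stage is offset by the height already gained), while $\int_a^{H^{\ast m}_1(a)}\omega\geq m$. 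That gives $n$-movability with the \emph{same} control $C$ for all $n$. Two ingredients are therefore missing from your write-up: the homotopy must be made cellular (otherwise the image of $H_1$ need not lie in $X^{(1)}$, and iteration is unavailable), and the passage from a single $1$-movable homotopy to $n$-movability for all $n$ must go through iteration rather than a fresh construction for each $n$.
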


\begin{proof}
 (1) $\Longrightarrow$ (2): Let $\overline{X}$ be the universal abelian cover of $X$ and $h:\overline{X}\to\R$ be obtained from the pullback of $\omega$ to $\overline{X}$. It is easy to see that we have $h(gx)=\xi(g)+h(x)$ for all $x\in \overline{X}$ and $g\in G$. We choose $N=h^{-1}([0,\infty))$, and let $N'\subset N$ be the component such that $h$ is unbounded on $N'$ by \cite[Lemma 5.2]{binest}. For every cell $\sigma$ of $X$ pick a lift $\bar{\sigma}\subset h^{-1}((-\infty,-1])$. If $\sigma$ is a 0-cell, we can find a cellular map $H_\sigma:[0,1]\to\overline{X}$ such that $H_\sigma(0)=\bar{\sigma}$ and $H_\sigma(1)\in N'$. Here $[0,1]$ has the standard cell structure with two 0-cells and one 1-cell. Using equivariance, this gives an equivariant cellular homotopy $H^0:\overline{X}^{(0)}\times [0,1]\to\overline{X}$. Note that $G$ acts on $\overline{X}\times [0,1]$ by $g(x,t)=(gx,t)$, and $H^0$ induces a homotopy on $X^{(0)}$ which gives $1$-movability of the 0-skeleton. As the image of $H^0_1$ is in the 0-skeleton, we can iterate this homotopy to obtain $n$-movability for any $n>0$.

Now pick a cell $\bar{\sigma}\subset h^{-1}((-\infty,-1])$ for every 1-cell $\sigma$ of $X$ and let $u,v$ be the boundary points of $\bar{\sigma}$. By possibly iterating $H^0$, we can assume that $H^0(u,1),H^0(v,)\in N'$ and since $N'$ is connected, we can find a cellular map $H_{u,v}:[0,1]\to N'$ connecting these points.

Note that $H_{u,v}([0,1])$, $H^0(\{u,v\}\times [0,1])$ and $\bar{\sigma}$ combine to a closed loop in $\tilde{X}$ when suitably oriented. But by Theorem \ref{definv} this loop is representable by a loop in $N'$. In other words, by changing the path $H_{u,v}$ suitably in $N'$, we can assume that this loop bounds. Therefore we can extend $H^0$ to $H_\sigma:(\overline{X}^{(0)}\cup\bar{\sigma})\times [0,1]\to \overline{X}$ cellularly such that $H_\sigma(\bar{\sigma},1)=H_{u,v}([0,1])$. Doing this for every 1-cell of $X$ and extending equivariantly gives a cellular and equivariant homotopy $H^1:\overline{X}^{(1)}\times [0,1]\to\overline{X}$ such that $H^1_0$ is inclusion and $h(H^1(x,1))-h(x)\geq 1$.

Since $H^1$ is equivariant we get a homotopy $H:X^{(1)}\times [0,1]\to X$ which shows that $X^{(1)}$ is 1-movable with respect to $\omega$. By compactness there is a $C>0$ such that $X^{(1)}$ is 1-movable with control $C$. Again the image of $H_1$ is in the 1-skeleton of $X$, so we can iterate the homotopy. Note that iterating the homotopy does not increase the control, so we get that $X^{(1)}$ is $n$-movable with respect to $\omega$ with control $C$ for every $n>0$.

(2) $\Longrightarrow$ (1): Let $h:\overline{X}\to\R$ and $N'\subset N=h^{-1}([0,\infty))$ be as above. Pick a basepoint $x_0\in N'$ with $f(x_0)\geq C+1$. Let $\gamma:(S^1,1)\to (\tilde{X},x_0)$ be a loop which can be assumed cellular. By compactness of $S^1$ there is a $K\leq 0$ such that $\gamma(S^1)\subset f^{-1}([K,\infty))$.

By assumption there is a homotopy $\bar{H}:\overline{X}^{(1)}\times [0,1]\to\overline{X}$ with $\bar{H}_0$ is inclusion and
\begin{eqnarray*}
h(\bar{H}_1(x))-h(x)&\geq &C-K\\
h(\bar{H}_t(x))-h(x)&\geq &-C
\end{eqnarray*}
for all $x\in \overline{X}$. Let $\mu:\tilde{X}\to[0,1]$ be a map with $\mu|f^{-1}([C+1,\infty))\equiv 0$ and $\mu|f^{-1}((-\infty,C])\equiv 1$. Now define $A:S^1\times[0,1]\to\tilde{X}$ by $A(x,t)=\bar{H}(\gamma(x),\mu(\gamma(x))\cdot t)$. Then $A(x,0)=\gamma(x)$, $A(x,1)=\bar{H}(\gamma(x),\mu(\gamma(x)))\in N'$ and $A(x_0,t)=x_0$ for all $t\in[0,1]$. Therefore $\xi\in\Sigma^1(G;\Z)$ by Theorem \ref{definv}.
\end{proof}

A criterion analogous to condition (2) of Proposition \ref{movewithcontrol} leads to the homotopical version of the Bieri-Neumann-Strebel-Renz invariants $\Sigma^k(G)$, introduced in \cite{bieren}. For this we assume that $X$ is an Eilenberg-MacLane space with finite $n$-skeleton for some $n\geq 1$.

\begin{definition}\label{sigmaG}
Let $X$ be as above, $\xi\in S(G)$, $\omega$ a closed 1-form on $X$ representing $\xi$ and $k\geq 0$. We say that $\xi\in \Sigma^k(G)$, if there is $\varepsilon>0$ and a cellular homotopy $H:X^{(k)}\times [0,1]\to X$ such that $H(x,0)=x$ for all $x\in X^{(k)}$ and
\begin{eqnarray*}
\int_{\gamma_x}\omega & \geq & \varepsilon
\end{eqnarray*}
for all $x\in X^{(k)}$, where $\gamma_x:[0,1]\to X$ is given by $\gamma_x(t)=H(x,t)$. Here $X^{(k)}$ denotes the $k$-skeleton of $X$.
\end{definition}

The condition that $H$ is cellular ensures that $H_1$ has image in $X^{(k)}$, so that the homotopy can be iterated. As a result we see that $\varepsilon$ can be arbitrarily large which shows that the definition does not depend on the particular $\omega$. These iterations all have the same control $C\geq 0$. Using cellular approximations it is easy to see that condition (2) of Proposition \ref{movewithcontrol} is equivalent to $\xi\in \Sigma^1(G)$.

The above definition is not the usual definition of $\Sigma^k(G)$, but it follows from Proposition \ref{equivdef} below that it agrees with the definition given in Bieri and Renz \cite[\S 6]{bieren}. 

Even though it is generally quite difficult to describe $\Sigma^k(G)$ and $\Sigma^k(G;\Z)$, there are some important classes of groups for which the Sigma invariants can be determined, for example right-angled Artin groups, see \cite{memewy}, and Thompson's group $F$, see \cite{bigeko}. For more information and applications of these invariants see, for example, \cite{bieri,biegeo,binest,bieren,harkoc}.

\section{Sigma invariants for CW-complexes}\label{sigmacw}

There is no particular reason for $X$ to be aspherical in Definition \ref{sigmaG}, so we can extend this definition to more general $X$. For simplicity we will assume that $X$ is a finite connected CW-complex, but it is possible to consider the case where $X$ has finite $n$-skeleton for some $n\geq 1$, in which case we can define $\Sigma^k(X)$ for $k\leq n$. Let us first define
\begin{eqnarray*}
 S(X)&=&(\Hom(H_1(X),\R)-\{0\})/\R_+
\end{eqnarray*}
where we again identify a homomorphism with its positive multiples. Clearly $S(X)=S(G)$ with $G=\pi_1(X)$.

The \em Sigma invariants for CW-complexes \em are now defined in analogy to Definition \ref{sigmaG}.

\begin{definition}\label{sigmaX}
Let $X$ be a finite connected CW-complex, $\xi\in S(X)$, $\omega$ a closed 1-form on $X$ representing $\xi$ and $k\geq 0$. We say that $\xi\in \Sigma^k(X)$, if there is $\varepsilon>0$ and a cellular homotopy $H:X^{(k)}\times [0,1]\to X$ such that $H(x,0)=x$ for all $x\in X^{(k)}$ and
\begin{eqnarray*}
\int_{\gamma_x}\omega & \geq & \varepsilon
\end{eqnarray*}
for all $x\in X^{(k)}$, where $\gamma_x:[0,1]\to X$ is given by $\gamma_x(t)=H(x,t)$.
\end{definition}

Let $p:\tilde{X}\to X$ be the universal covering space. Just as in the smooth manifold case, a closed 1-form $\omega$ pulls back to an exact form on $\tilde{X}$, $p^\ast\omega=dh$ for some $h:\tilde{X}\to\R$ with
\begin{eqnarray}\label{equivheight}
h(gx)&=&h(x)+\xi(g)
\end{eqnarray}
for all $g\in\pi_1(X)$ and $x\in\tilde{X}$. A function $h:\tilde{X}\to \R$ with property (\ref{equivheight}) is called a \em height function\em. Such a height function defines a closed 1-form representing $\xi$.
A subset $N\subset \tilde{X}$ is called a \em neighborhood of $\infty$ with respect to $\xi$\em, if there exists a height function $h_\xi:\tilde{X}\to\R$ and $a \in \R$ such that
\begin{eqnarray*}
h_\xi^{-1}([a,\infty))&\subset &N.
\end{eqnarray*}
It is easy to check that if $N$ is a neighborhood of $\infty$ with respect to $\xi$ for some height function, it is also a neighborhood of $\infty$ with respect to $\xi$ for every other height function.

If a particular height function $h_\xi$ is given, we write $N_i=h_\xi^{-1}([i,\infty))$ for every $i\in \R$.

We can describe $\Sigma^k(X)$ in terms of height functions alone. For this we need one more definition.

\begin{definition}\label{pathtoinfinity}
Let $X$ be a finite connected CW-complex, $h_\xi:\tilde{X}\to\R$ a height function and $\xi\in H^1(X;\R)$ be nonzero. A \em path to $\infty$ with respect to $\xi$ \em is a map $\gamma:[0,\infty)\to \tilde{X}$ such that for every neighborhood $N$ of $\infty$ there is a $T\geq 0$ such that $\gamma(t)\in N$ for all $t\geq T$.
\end{definition}

Given a path $\gamma$ to $\infty$ we can pick points $\gamma(T_N)\in N$ for every neighborhood $N$ of $\infty$ and get an inverse system $\{\pi_\ast(\tilde{X},N,\gamma(T_N))\}$ where the basepoint change is done via $\gamma|[T_N,T_{N'}]$. We will often suppress the basepoints but we want to point out that there is always a path to $\infty$ in the background. The next proposition shows that the basepath is not important for our purposes.

\begin{proposition}\label{equivdef}
Let $X$ be a finite connected CW-complex, $\xi\in H^1(X;\R)$ be nonzero and $h_\xi:\tilde{X}\to\R$ a height function. The following are equivalent.
\begin{enumerate}
\item $\xi\in \Sigma^k(X)$.
\item There is a $\lambda\geq 0$ such that $j_\#:\pi_l(\tilde{X},N_i)\to \pi_l(\tilde{X},N_{i-\lambda})$ is trivial for all $l\leq k$ and every $i\in\R$.
\item For every neighborhood $N$ of $\infty$ with respect to $\xi$, there is another neighborhood $N'\subset N$ such that $j_\#:\pi_l(\tilde{X},N')\to \pi_l(\tilde{X},N)$ is trivial for all $l\leq k$.
\item There is an $\varepsilon >0$ and an equivariant cellular homotopy $\tilde{H}:\tilde{X}^{(k)}\times [0,1]\to \tilde{X}$, such that $\tilde{H}_0$ is inclusion and $h_\xi(\tilde{H}_1(\tilde{x}))-h_\xi(\tilde{x})\geq \varepsilon$ for all $\tilde{x}\in\tilde{X}^{(k)}$.
\end{enumerate}
\end{proposition}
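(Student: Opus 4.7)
The plan is to establish the cycle $(1) \Leftrightarrow (4) \Rightarrow (2) \Rightarrow (3) \Rightarrow (4)$. First I fix a closed 1-form $\omega$ representing $\xi$ and pass to its lift $p^\ast\omega = dh_\xi$ on the universal cover, so that the equivalence $(1) \Leftrightarrow (4)$ becomes a routine covering space translation: any cellular homotopy $H:X^{(k)}\times[0,1]\to X$ with $H_0=\iota$ lifts uniquely to an equivariant cellular $\tilde H:\tilde X^{(k)}\times[0,1]\to\tilde X$ with $\tilde H_0=\iota$, and for any lift $\tilde x$ of $x$ the integral $\int_{\gamma_x}\omega$ equals $h_\xi(\tilde H_1(\tilde x))-h_\xi(\tilde x)$. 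Since this displacement function is $G$-invariant by (\ref{equivheight}), a uniform bound $\geq\varepsilon$ on $X^{(k)}$ is equivalent to the same bound on $\tilde X^{(k)}$.

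For $(4)\Rightarrow(2)$, cellularity of $\tilde H$ ensures $\tilde H_1(\tilde X^{(k)})\subset \tilde X^{(k)}$, so $\tilde H$ may be concatenated with itself $n$ times to produce displacement $\geq n\varepsilon$. The compactness of $X^{(k)}\times[0,1]$ and the $G$-invariance of $h_\xi(\tilde H_t(\tilde x))-h_\xi(\tilde x)$ yield a uniform constant $C\geq 0$ with $h_\xi(\tilde H_t(\tilde x))-h_\xi(\tilde x)\geq -C$, and this bound survives iteration. Taking $\lambda=C$: given a class in $\pi_l(\tilde X,N_i)$ with $l\leq k$, a cellular representative $f:(D^l,S^{l-1})\to(\tilde X,N_i)$ factors through $\tilde X^{(k)}$, and applying the $n$-fold iterate to $f$ for $n$ large provides a homotopy of pairs into $(\tilde X,N_{i-\lambda})$ that deforms $f$ to a map with image in $N_{i-\lambda}$. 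The implication $(2)\Rightarrow(3)$ is immediate: if $N$ is a neighborhood of $\infty$ containing some $N_a$, then $N':=N_{a+\lambda}$ works.

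For $(3)\Rightarrow(4)$, I construct $\tilde H$ inductively over skeleta, fixing one $G$-orbit representative per cell and extending equivariantly. The base case $k=0$ uses connectedness of $\tilde X$ together with $\xi\neq 0$ to produce, for each orbit representative, a cellular path raising $h_\xi$ by at least $1$. For the inductive step, assume $\tilde H^{(k-1)}$ has been constructed with displacement $\geq\varepsilon_{k-1}$ and control $C_{k-1}$. For each orbit representative $\sigma$ of a $k$-cell with characteristic map $\Phi_\sigma:D^k\to\tilde X$ and attaching map $\phi_\sigma$, first iterate $\tilde H^{(k-1)}$ to push $\phi_\sigma$ into a neighborhood $N_{M'}$ with $M'$ arbitrarily large. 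Gluing $\Phi_\sigma$ to this homotopy of $\phi_\sigma$ yields a map of pairs $(D^k,S^{k-1})\to(\tilde X,N_{M'})$; choosing $M'$ large enough, hypothesis (3) forces its image in $\pi_k(\tilde X,N_M)$ to vanish, and the resulting null-homotopy provides the required extension of the partial homotopy on the corner $(D^k\times\{0\})\cup(S^{k-1}\times[0,1])$ over $D^k\times[0,1]$, with the $t=1$ face mapped into $N_M$. Cellular approximation and equivariant extension over the full orbit close the step.

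The main obstacle will be this inductive extension: translating triviality of the relative homotopy class in $\pi_k(\tilde X,N_M)$ into an honest cellular extension of the partial homotopy defined on the corner $(D^k\times\{0\})\cup(S^{k-1}\times[0,1])\subset D^k\times[0,1]$, which requires a careful reparametrization of the pair-map null-homotopy, plus cellular approximation maintained through iteration. One must also check that the finitely many orbit representatives of $k$-cells give rise to a common positive displacement $\varepsilon_k$ for $\tilde H^{(k)}$, so the induction can continue to the next skeleton.
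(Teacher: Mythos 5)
Your proof follows essentially the same route as the paper: the lift/descend translation for $(1)\Leftrightarrow(4)$, the iterate-and-compare argument for $(4)\Rightarrow(2)$, the trivial $(2)\Rightarrow(3)$, and the same skeleton-by-skeleton induction (using the triviality in $(3)$ to fill the glued corner map) for $(3)\Rightarrow(4)$. The one point you leave implicit is in $(4)\Rightarrow(2)$: you produce only a \emph{free} homotopy of pair maps $(D^l,S^{l-1})\to(\tilde{X},N_{i-\lambda})$ carrying $f$ to a map with image in $N_{i-\lambda}$, so to conclude $j_\#[f]=0$ in the based relative homotopy set $\pi_l(\tilde{X},N_{i-\lambda})$ one must still invoke the basepoint-change argument (the basepoint track stays in $N_{i-\lambda}$ since the control is $\lambda=C$, and the $\pi_1(N_{i-\lambda})$-action fixes the trivial class); the paper handles this instead by reparametrizing $\tilde{H}$ on a buffer zone $N_{i-\lambda}\setminus N_i$ so that points of $N_i$, in particular the basepoint, are held fixed throughout, and both devices are valid.
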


\begin{proof}
(1) $\Longleftrightarrow$ (4) as we can lift $H$ to $\tilde{H}$ and $\tilde{H}$ determines $H$.
(2) $\Longrightarrow$ (3) is obvious.
(3) $\Longrightarrow$ (4): We define $H$ by induction on the skeleta of $\tilde{X}$. The homotopy can always be defined on $\tilde{X}^{(0)}$ as $\tilde{X}$ is connected and $\xi$ nonzero. Assume that $H:\tilde{X}^{(k-1)}\times [0,1]\to \tilde{X}$ satisfies the conclusion of (4). Let $N=N_0=h_\xi^{-1}([0,\infty))$ be a neighborhood of $\infty$ with respect to $\xi$ and $N'\subset N$ as in (3), and choose a $\varepsilon>0$. Choose a lift $\tilde{\sigma}\subset \tilde{X}-N_{-\varepsilon}$ for every $k$-cell $\sigma$ of $X$. By iterating $H$ if necessary, we can assume that $H_1(\partial \tilde{\sigma})\subset N'$. Given a characteristic map $\chi_\sigma:(D^k,S^{k-1})\to (\tilde{X}^{(k)},\tilde{X}^{(k-1)})$, we can compose $\chi_\sigma|S^{k-1}$ with $H$ to get an element of $\pi_k(\tilde{X},N')$ which restricts to the trivial element of $\pi_k(\tilde{X},N)$. This gives a homotopy $\chi:D^k\times[0,1]\to\tilde{X}$ with $\chi_0=\chi_\sigma$ and $\chi_1(D^k)\subset N$. We can use this homotopy to extend $H$ equivariantly to the $k$-skeleton such that (4) is satisfied.

To see that (4) $\Longrightarrow$ (2) observe that $H$ can be used to homotop every map $\varphi:(D^k,S^{k-1})\to (\tilde{X},N_i)$ to a map with image in any neighborhood of $\infty$. As the base point should not be moved during the homotopy, we have to modify $H$ on a `buffer zone' $N_{i-\lambda}-N_i$ so that points mapped to $N_i$ will not be changed. Nevertheless we can find $N_{i-\lambda}$ such that every $\varphi$ can be homotoped to a map $(D^k,S^{k-1})\to (N_{i-\lambda})$ which gives (1).
\end{proof}


\begin{remark}\label{arblarge}
Notice that in (4) we can choose $\varepsilon>0$ arbitrary large: as the homotopy is cellular, we can simply iterate it to increase the $\varepsilon$.
\end{remark}

Our next result shows that the Sigma invariants are in fact open subsets of $S(X)$, for the group theoretic version of this statement see \cite{binest,bieren}. For the proof we need a version of an Abel-Jacobi map. Let $q:\overline{X}\to X$ be the universal abelian cover of $X$ and let $r=b_1(X)$, the first Betti number of $X$. Then $H_1(X)$ acts on $\overline{X}$ by covering translations and on $\R^r=H_1(X)\otimes \R$ by translation.

There exists an equivariant map $A:\overline{X}\to \R^r$, canonical up to homotopy, called an \em Abel-Jacobi map\em, see \cite[Prop.1]{farscm}. For a different construction, note that we have a canonical epimorphism $\pi_1(X)\to \Z^r$ factoring through $H_1(X)$. Then $A$ is a lift of the resulting classifying map $X\to (S^1)^r$.

\begin{theorem}
Let $X$ be a finite connected CW-complex. For every $k\geq 0$ the set $\Sigma^k(X)$ is open and $\Sigma^n(X)=\Sigma^{\dim X}(X)$ for $n\geq \dim X$.
\end{theorem}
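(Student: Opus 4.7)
The plan is to prove the two assertions separately, with openness being the substantive one.

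\textbf{Stabilization.} The stabilization statement $\Sigma^n(X)=\Sigma^{\dim X}(X)$ for $n\geq \dim X$ is essentially a tautology: if $k\geq \dim X$, then $X^{(k)}=X$, so the condition in Definition \ref{sigmaX} is the same for all such $k$. Hence I would dispose of this in a single sentence.

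\textbf{Openness.} Given $\xi\in\Sigma^k(X)$, the goal is to produce an open neighborhood of $\xi$ in $S(X)$ still contained in $\Sigma^k(X)$. The strategy is to exploit the Abel-Jacobi map to reduce a per-cell statement on the (infinite) skeleton $\tilde X^{(k)}$ to a compact statement on $X^{(k)}$, which then visibly survives a small perturbation of $\xi$. Let $A:\overline{X}\to\R^r$ be an Abel-Jacobi map, $\tilde A:\tilde X\to\R^r$ its lift to the universal cover, and for any $\eta\in\Hom(H_1(X),\R)$ let $h_\eta=\eta\circ \tilde A$. This is a height function for $\eta$, and crucially the \emph{same} map $\tilde A$ serves for every $\eta$.

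By Proposition \ref{equivdef}(4) applied to $\xi$ and $h_\xi$, there is $\varepsilon>0$ and an equivariant cellular homotopy $\tilde H:\tilde X^{(k)}\times[0,1]\to\tilde X$ with $\tilde H_0$ the inclusion and $h_\xi(\tilde H_1(\tilde x))-h_\xi(\tilde x)\geq\varepsilon$. Define the displacement
\begin{equation*}
d:\tilde X^{(k)}\to\R^r,\qquad d(\tilde x)=\tilde A(\tilde H_1(\tilde x))-\tilde A(\tilde x).
\end{equation*}
Using that $\tilde A$ is $\pi_1(X)$-equivariant modulo translations by the image of $\pi_1(X)$ in $H_1(X;\R)$, together with the equivariance of $\tilde H$, these translations cancel and $d$ is $\pi_1(X)$-invariant. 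Hence $d$ descends to a continuous map $\bar d:X^{(k)}\to\R^r$, and since $X^{(k)}$ is compact, $K:=\bar d(X^{(k)})\subset\R^r$ is a compact set. The hypothesis now reads $\langle\xi,v\rangle\geq\varepsilon$ for all $v\in K$.

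Because $K$ is compact, the set $U=\{\eta:\langle\eta,v\rangle>0\text{ for all }v\in K\}$ is open in $\Hom(H_1(X),\R)$ and contains $\xi$. For any $\xi'\in U$, compactness of $K$ yields $\varepsilon':=\min_{v\in K}\langle\xi',v\rangle>0$, and then $h_{\xi'}(\tilde H_1(\tilde x))-h_{\xi'}(\tilde x)=\langle\xi',d(\tilde x)\rangle\geq\varepsilon'$, so the \emph{same} homotopy $\tilde H$ verifies Proposition \ref{equivdef}(4) for $\xi'$. Passing to $S(X)$, this places an open neighborhood of $\xi$ inside $\Sigma^k(X)$. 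The main obstacle is the reduction from the infinite complex $\tilde X^{(k)}$ to something compact: without the Abel-Jacobi map one would need simultaneous estimates over infinitely many cells, whereas $\tilde A$ packages the nontrivial information about $\tilde H$ into the single compact set $K\subset\R^r$ on which the perturbation estimate is elementary.
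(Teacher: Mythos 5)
Your proof is correct and takes essentially the same approach as the paper: the authors likewise compose the Abel--Jacobi map with $\xi$ to obtain a family of height functions depending linearly on $\xi$, feed the single homotopy $\tilde H$ into the resulting displacement, and use compactness of $X^{(k)}$ to stabilize the estimate under small perturbations of $\xi$. Your packaging of the displacement as a map $\bar d:X^{(k)}\to\R^r$ with compact image $K$ and the observation that the stabilization is a tautology since $X^{(k)}=X$ for $k\geq\dim X$ are both sound and match the paper's argument in substance.
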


\begin{proof}
Let $h:\tilde{X}\to \R^r$ by the composition of the covering map $\bar{p}:\tilde{X}\to\overline{X}$ with the Abel-Jaobi map $A:\overline{X}\to \R^r$. Then, given $\xi:\pi_1(X)\to\R$ we get a height function by $h_\xi=l_\xi\circ h$ where $l_\xi:\pi_1(X)/[\pi_1(X),\pi_1(X)]\otimes \R\to \R$ is defined by $l_\xi([g]\otimes t)=\xi(g)\cdot t$. Now let $\tilde{H}$ be a homotopy as in Proposition \ref{equivdef} (4) for a $\xi$. Define $\tilde{K}:S(X)\times \tilde{X}^{(k)}\to\R$ by
\begin{eqnarray*}
 \tilde{K}(\xi',\tilde{x})&=&h_{\xi'}(H_1(\tilde{x}))-h_{\xi'}(\tilde{x}).
\end{eqnarray*}
By the choice of $H$ we get $\tilde{K}(\xi,\tilde{x})\geq \varepsilon$ for all $\tilde{x}\in\tilde{X}$ and some $\varepsilon>0$. It also induces a map $K:S(X)\times X^{(k)} \to \R$. From the compactness of $X^{(k)}$, we get
\begin{eqnarray*}
\tilde{K}(\xi',\tilde{x})&\geq &\frac{\varepsilon}{2}
\end{eqnarray*}
for all $\tilde{x}\in\tilde{X}^{(k)}$and all $\xi'$ in a neighborhood of $\xi$ in $S(X)$. Therefore Proposition \ref{equivdef} (4) is satisfied for all such $\xi'$.
\end{proof}

We get the following relation between $\Sigma^k(X)$ and the group-theoretic version $\Sigma^k(\pi_1(X))$.

\begin{proposition}
 Let $X$ be a finite connected CW-complex and $k\geq 0$. If $\tilde{X}$ is $k$-connected, then $\Sigma^k(X)=\Sigma^k(\pi_1(X))$ and $\Sigma^{k+1}(X)\subset \Sigma^{k+1}(\pi_1(X))$.
\end{proposition}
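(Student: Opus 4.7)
The plan is to construct a $K(\pi_1(X),1)$ complex $Y$ whose low-dimensional skeleton coincides with that of $X$, and then translate the cellular-homotopy data in Definition \ref{sigmaX} back and forth between $X$ and $Y$. Since $\tilde X$ is $k$-connected, $\pi_i(X)=0$ for $2\le i\le k$, so one builds $Y$ from $X$ by inductively attaching cells of dimension $\ge k+2$ to kill $\pi_{k+1}(X),\pi_{k+2}(X),\ldots$. The resulting $Y$ is an Eilenberg-MacLane space for $\pi_1(X)$ with $Y^{(k+1)}=X^{(k+1)}$; since $X$ is finite, $Y$ has finite $(k+1)$-skeleton, so $\Sigma^k(\pi_1(X))$ and $\Sigma^{k+1}(\pi_1(X))$ are defined as in Definition \ref{sigmaG} (applied to $Y$).

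Next, fix a closed 1-form $\omega$ on $Y$ representing $\xi$; its restriction to $X$ represents the same class. A cellular homotopy $H\colon X^{(k)}\times [0,1]\to X$ witnessing $\xi\in\Sigma^k(X)$ composes with $X\hookrightarrow Y$ to give a cellular homotopy $Y^{(k)}\times[0,1]\to Y$ (using $Y^{(k)}=X^{(k)}$), with the same integral bound, so $\xi\in\Sigma^k(\pi_1(X))$. Conversely, any cellular homotopy $Y^{(k)}\times[0,1]\to Y$ has image in $Y^{(k+1)}=X^{(k+1)}\subset X$ by cellularity of the source $(k{+}1)$-complex $Y^{(k)}\times[0,1]$, so it factors as a cellular homotopy $X^{(k)}\times[0,1]\to X$, giving $\xi\in\Sigma^k(X)$. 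This yields the equality $\Sigma^k(X)=\Sigma^k(\pi_1(X))$. For the inclusion $\Sigma^{k+1}(X)\subset\Sigma^{k+1}(\pi_1(X))$ the same forward argument applies with $X^{(k+1)}=Y^{(k+1)}$; the reverse direction is blocked because a cellular homotopy of $Y^{(k+1)}$ in $Y$ lands in $Y^{(k+2)}$, which may contain $(k+2)$-cells attached to kill $\pi_{k+1}(X)$ and hence is not contained in $X$.

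The main technical point I anticipate is that Definition \ref{sigmaX} fixes a closed 1-form up front, whereas I wish to use $\omega$ on $Y$ and its restriction on $X$; any other representative on $X$ differs from $\omega|_X$ by an exact form $df$, and $f$ is bounded on the compact space $X$, so each path integral is perturbed by a bounded amount. By the iteration argument of Remark \ref{arblarge}, $\varepsilon$ may be chosen arbitrarily large and absorbs this bounded error, so the invariants do not depend on the chosen representative. With that observation the skeletal matching goes through verbatim.
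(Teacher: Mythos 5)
Your proposal is correct and follows the same route as the paper: build $Y=K(\pi_1(X),1)$ from $X$ by attaching cells of dimension $\geq k+2$ (so $Y^{(k+1)}=X^{(k+1)}$ is finite), then push cellular homotopies back and forth between $X$ and $Y$, using that any cellular homotopy of the $k$-skeleton has image in the $(k+1)$-skeleton. The extra care you take about independence from the choice of closed $1$-form is correct but is already guaranteed by the equivalence in Proposition~\ref{equivdef}, which characterizes $\Sigma^k$ in terms of height functions on the universal cover and is how the paper's proof phrases the argument.
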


The inclusion can be proper, as the example $X=\Sp^1\vee \Sp^k$ with $k\geq 2$ shows.

\begin{proof}
 Note that we can build a $K(\pi_1(X),1)$ out of $X$ by attaching $n$-cells for $n\geq k+2$. Denote this Eilenberg-MacLane space by $Y$. If $\xi\in \Sigma^{k+1}(X)$, the cellular homotopy $\tilde{H}:\tilde{X}^{(k+1)}\times [0,1]\to \tilde{X}$ from Proposition \ref{equivdef} (4) induces a homotopy $\tilde{H}':\tilde{Y}^{(k+1)}\times [0,1] \to \tilde{Y}$, since $\tilde{Y}^{(k+1)}=\tilde{X}^{(k+1)}$ and $\tilde{X}\subset \tilde{Y}$. Therefore $\xi\in \Sigma^{k+1}(\pi_1(X))$.

If $\xi\in \Sigma^{k}(\pi_1(X))$, we get a cellular homotopy $\tilde{H}:\tilde{Y}^{(k)}\times [0,1]\to \tilde{Y}^{(k+1)}$ as in Proposition \ref{equivdef} (4), and since $\tilde{Y}^{(k+1)}=\tilde{X}^{(k+1)}$, this gives $\xi\in \Sigma^k(X)$.
\end{proof}

In analogy to the homological invariants $\Sigma^k(G,\Z)$ of Bieri and Renz \cite{bieren} we now want to define homological invariants $\Sigma^k(X,\Z)$. We will in fact introduce a more general definition for chain complexes which will also generalize the invariants of \cite{bieren}. We assume that all chain complexes satisfy $C_i=0$ for $i<0$.
\begin{definition}
Let $R$ be a ring, $n$ a non-negative integer and $C$ a chain complex over $R$. Then $C$ is of \em finite $n$-type\em, if there is a finitely generated projective chain complex $C'$ and a chain map $f:C'\to C$ with $f_i:H_i(C')\to H_i(C)$ an isomorphism for $i<n$ and an epimorphism for $i=n$. In this situation we call $f$ an $n$-equivalence.
\end{definition}

It is clear that this is equivalent to the existence of a free $R$-chain complex $D$ and a chain map $f:D\to C$ inducing an isomorphism on homology, and such that $D_i$ is finitely generated for $i\leq n$.


\begin{definition}
Let $C$ be a chain complex over $\Z G$ and $k\geq 0$. Then
\begin{eqnarray*}
\Sigma^k(C)&=&\{\xi\in S(G)\,|\, C \mbox{ is of finite }k\mbox{-type over }\Z G_\xi\}.
\end{eqnarray*}
\end{definition}
\begin{definition}
If $X$ is a finite connected CW-complex, we set
\begin{eqnarray*}
 \Sigma^k(X;\mathbb{Z})&=&\Sigma^k(C_\ast(\tilde{X})).
\end{eqnarray*}
\end{definition}

The invariants $\Sigma^n(G;A)$ of Bieri and Renz \cite{bieren} are given by $\Sigma^k(G;A)=\Sigma^k(P)$, where $P$ is a projective $\Z G$ resolution of the $\mathbb{Z}G$-module $A$.

\begin{remark}
Notice that $\Z G$ is a flat $\Z G_\xi$-module, as $\Z G$ is a direct limit of free $\Z G_\xi$-modules. As $\Z G\otimes_{\Z G_\xi}A\cong A$ for every $\Z G$-module $A$, we see that $\xi\in \Sigma^n(C)$ implies that $C$ is of finite $n$-type over $\Z G$. If the chain complex $C$ is free, we then get that $C$ is chain-homotopy equivalent to a free chain complex $D$ such that $D_i$ is finitely generated for $i\leq n$.
\end{remark}

To get an analogue of Proposition \ref{equivdef} we will define a chain complex version of a height function.

\begin{definition}
Let $C$ be a finitely generated free chain complex over $\Z G$ and $\xi:G\to \R$ a non-zero homomorphism.
A \em valuation on $C$ extending $\xi$ \em is a sequence of maps $v:C_k\to \R_\infty$ satisfying the following
\begin{enumerate}
\item $v(a+b)\geq \min\{v(a),v(b)\} $ for all $a,b\in C_k$.
\item $v(ga)=\xi(g)+v(a)$ for all $g\in G$, $a\in C_k$.
\item $v(-a)=v(a)$ for all $a\in C_k$.
\item $v(\partial a) \geq v(a)$ for all $a\in C_k$.
\item $v^{-1}(\{\infty\})=\{0\}$.
\end{enumerate}
\end{definition}
Here $\R_\infty$ denotes the reals together with an element $\infty$ with the obvious extension of addition and $\geq$.

To define a valuation on a free $\Z G$ complex $C$ which is finitely generated in every degree, let $X_i$ be a basis for $C_i$. We begin with setting $v(x)=0$ for all $x\in X_0$. The valuation can then be extended in the obvious way to $C_0$. Inductively we now define for $x\in C_i$ the valuation by $v(x)=0$ if $\partial x=0$, or $v(x)=v(\partial x)$, if $\partial x\not=0$. Again we can extend $v$ to $C_i$ which gives the existence of valuations on $C$.

\begin{proposition}\label{equivdefhom}
Let $X$ be a finite connected CW-complex, $\xi\in H^1(X;\R)$ be nonzero and $v:C_\ast(\tilde{X})\to\R_\infty$ a valuation extending $\xi$. The following are equivalent.
\begin{enumerate}
\item $\xi\in \Sigma^k(X,\Z)$.
\item There is a $\lambda\geq 0$ such that $j_\ast:H_l(\tilde{X},N_i)\to H_l(\tilde{X},N_{i-\lambda})$ is trivial for all $l\leq k$ and every $i\in\R$.
\item For every neighborhood $N$ of $\infty$ with respect to $\xi$, there is another neighborhood $N'\subset N$ such that $j_\ast:H_l(\tilde{X},N')\to H_l(\tilde{X},N)$ is trivial for all $l\leq k$.
\item Given $\varepsilon>0$ there exists a $\Z G$-chain map $A:C_\ast(\tilde{X})\to C_\ast(\tilde{X})$ chain homotopic to the identity with $v(A(x))\geq v(x)+\varepsilon$ for all non-zero $x\in C_i(\tilde{X})$ with $i\leq k$.
%
\end{enumerate}
\end{proposition}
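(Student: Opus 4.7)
The proof parallels Proposition~\ref{equivdef}, with the $\Z G$-chain map $A$ and its chain homotopy $H$ replacing the equivariant cellular homotopy, and valuations replacing the height function. I will establish the circuit $(4)\Longrightarrow(2)\Longrightarrow(3)\Longrightarrow(4)$ for the geometric equivalences and handle $(1)\Longleftrightarrow(4)$ separately.

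For $(4)\Longrightarrow(2)$, observe that since $C_\ast(\tilde X)$ is $\Z G$-finitely generated in each degree, any $\Z G$-equivariant chain homotopy $H$ with $\id - A = \partial H + H\partial$ shifts valuation by a uniformly bounded amount $-c_H$ (the minimum of $v(H(x))$ over a finite basis, via axiom~(2) of a valuation). Iterating gives $\id - A^n = \partial H_n + H_n\partial$ with $H_n = \sum_{i=0}^{n-1} A^i H$ still shifting valuation by at least $-c_H$ (each summand $A^i H$ shifts by at least $-c_H + i\varepsilon$, so the minimum is attained at $i=0$), while $A^n$ shifts by $n\varepsilon$. For $[z]\in H_l(\tilde X, N_i)$ with $l\leq k$, the identity $z = \partial H_n(z) + A^n(z) + H_n(\partial z)$ exhibits $z$ modulo $\partial C_{l+1}(\tilde X)$ as $A^n(z) + H_n(\partial z)$, which lies in $C_l(N_{i-c_H})$ once $n$ is large enough. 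Thus $\lambda = c_H$ suffices. The step $(2)\Longrightarrow(3)$ is immediate: any neighborhood $N$ of $\infty$ contains some $N_i$, and $N_{i+\lambda}\subset N_i\subset N$ factors the map through the trivial $H_l(\tilde X, N_{i+\lambda})\to H_l(\tilde X, N_i)$.

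For $(3)\Longrightarrow(4)$, iterate (3) to produce a nested sequence of deep neighborhoods in which relative cycles die at all levels $l\leq k$, and construct $A,H$ by induction on degree, one $G$-orbit of cells at a time, mirroring the skeletal construction of Proposition~\ref{equivdef}. In degree $0$, for each orbit representative $e_\alpha$ pick $A(e_\alpha)$ at valuation $\geq v(e_\alpha)+\varepsilon$ with a bounding $1$-chain $H(e_\alpha)$ provided by connectedness of $\tilde X$. At the inductive step, for a representative $e_\beta$ in degree $i$ the chain $A(\partial e_\beta) = \partial(e_\beta - H(\partial e_\beta))$ is already a boundary in $\tilde X$; the triviality afforded by (3) permits us to choose a bounding chain $A(e_\beta)$ lying in a neighborhood deep enough to achieve $v(A(e_\beta))\geq v(e_\beta)+\varepsilon$, with $H(e_\beta)$ the corresponding $2$-chain in $\tilde X$. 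Extending $\Z G$-equivariantly to the whole orbit completes the step.

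The equivalence $(1)\Longleftrightarrow(4)$ is the Bieri-Renz-style characterization of finite $k$-type over $\Z G_\xi$ in terms of valuation-shifting chain self-maps. For $(4)\Longrightarrow(1)$, the pair $(A,H)$ supplies, via a mapping telescope of the iterates $A^n$, a $\Z G_\xi$-finitely generated free subcomplex of $C_\ast(\tilde X)$ that is $k$-equivalent to $C_\ast(\tilde X)$; finite generation over $\Z G_\xi$ follows from the uniform valuation shift and the $\Z G$-finite generation of $C_\ast(\tilde X)$. Conversely, a $k$-equivalence $f\colon D\to C_\ast(\tilde X)$ from a $\Z G_\xi$-free complex $D$ finitely generated through degree $k$ extends by $\Z G\otimes_{\Z G_\xi}-$ to a $\Z G$-equivalence $\Z G\otimes_{\Z G_\xi} D\simeq C_\ast(\tilde X)$ in degrees $\leq k$, using flatness of $\Z G$ over $\Z G_\xi$; composing with translation by group elements of prescribed $\xi$-value produces $A$ with arbitrary shift $\varepsilon>0$. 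The main obstacle is precisely this algebraic equivalence, which requires careful navigation between the $\Z G_\xi$- and $\Z G$-module structures; the geometric equivalences $(4)\Leftrightarrow(2)\Leftrightarrow(3)$ reduce to routine bookkeeping once the valuation and neighborhood-of-$\infty$ formalism is in place.
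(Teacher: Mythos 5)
Your circuit $(4)\Rightarrow(2)\Rightarrow(3)\Rightarrow(4)$ is in the spirit of what the paper intends (it explicitly says these equivalences parallel Proposition~\ref{equivdef} and omits them), and your iteration $\id - A^n = \partial H_n + H_n\partial$ with $H_n = \sum A^i H$ is the right bookkeeping, modulo the unremarked compatibility between the valuation $v$ and the height function $h_\xi$ defining the sets $N_i$. The problem is the equivalence with $(1)$, which the paper handles in Appendix~\ref{appendicitis} via a filtration of $C_\ast(\tilde X)$ by the $\Z G_\xi$-finitely-generated free subcomplexes $gC^\xi$ together with Brown's homological finiteness criterion (Proposition~\ref{critone} and Theorem~\ref{browncrit}). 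Your sketch replaces this with two moves that do not clearly work, and you yourself flag this as ``the main obstacle'' without resolving it.

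For $(4)\Rightarrow(1)$, ``a mapping telescope of the iterates $A^n$'' does not obviously yield a $\Z G_\xi$-finitely-generated free subcomplex $k$-equivalent to $C_\ast(\tilde X)$. The natural candidate $C^\xi = \{x : v(x)\ge 0\}$ is indeed finitely generated free over $\Z G_\xi$ (this already uses an argument of Bieri--Renz), and the epimorphism on $H_l$ for $l\le k$ does follow by pushing cycles deep with $A^n$; but injectivity for $l<k$ fails for $C^\xi$ itself, because the bounding chain $A^n w + H_n z$ you would produce has valuation only $\ge -c_H$, so it lands in a strictly larger subcomplex, and repeating the argument for $C^\xi_{-c_H}$ only pushes the problem to $C^\xi_{-2c_H}$. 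This regress is precisely what Brown's criterion (the $\prod_J$-compatibility test, Proposition~\ref{critone}) is designed to circumvent; without that or an equivalent input, the step is incomplete.

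For $(1)\Rightarrow(4)$, ``composing with translation by group elements of prescribed $\xi$-value'' is not available: left multiplication by $g_0\in G$ is not a $\Z G$-module map unless $g_0$ is central, so it does not produce the required $\Z G$-chain self-map $A$. The valuation-shifting chain map must instead be extracted from essential triviality of the direct system $\{H_i(C/gC^\xi)\}_{g\in G}$, which is exactly what the Appendix establishes. So as written, both directions of $(1)\Leftrightarrow(4)$ have genuine gaps; the missing idea is the Brown-style finiteness criterion applied to the filtration $\{gC^\xi\}$.
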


The equivalences of (2),(3) and (4) are similar to the proof of Proposition \ref{equivdef} and will be omitted. For the equivalence to (1) we refer to Appendix \ref{appendicitis} which treats a more general version.

\begin{corollary}
Let $X$ be a finite connected CW-complex. For every $k\geq 0$ the set $\Sigma^k(X,\Z)$ is open and $\Sigma^n(X,\Z)=\Sigma^{\dim X}(X,\Z)$ for $n\geq \dim X$.\qed
\end{corollary}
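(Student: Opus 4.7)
The second assertion $\Sigma^n(X,\Z)=\Sigma^{\dim X}(X,\Z)$ for $n\geq\dim X$ is immediate from Proposition \ref{equivdefhom}(3): the universal cover $\tilde{X}$ has cellular dimension $\leq\dim X$, so $H_l(\tilde{X},N)=0$ for every subcomplex $N$ and every $l>\dim X$; thus the vanishing condition in (3) is automatic in those degrees. This gives $\Sigma^{\dim X}(X,\Z)\subseteq \Sigma^n(X,\Z)$, while the reverse inclusion is built into the chain-level definition of finite $k$-type.

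For openness I plan to follow the pattern of the openness proof for $\Sigma^k(X)$, with valuations playing the role of height functions and the chain map $A$ of Proposition \ref{equivdefhom}(4) playing the role of the equivariant homotopy $\tilde{H}$. First, choose once and for all a lift $\tilde{\sigma}\in\tilde{X}$ of each cell $\sigma$ of $X$, so that the $\tilde{\sigma}$'s form a (finite-in-each-degree) $\Z G$-basis of $C_*(\tilde{X})$. For every $\xi\in S(X)$ define a valuation $v_\xi$ on $C_*(\tilde{X})$ by the inductive procedure given just after the definition of valuation: $v_\xi(\tilde{\sigma})=0$ on $0$-cell lifts, $v_\xi(\tilde{\sigma})=v_\xi(\partial\tilde{\sigma})$ (or $0$) in higher degrees, extended $\Z G$-linearly via axiom (2). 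Since the recursion involves $\xi$ only through evaluations $\xi(g)$ on finitely many $g\in G$ and through finite minima, $\xi\mapsto v_\xi(c)$ is piecewise linear, hence continuous, on $S(X)$ for every fixed chain $c$.

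Now suppose $\xi_0\in\Sigma^k(X,\Z)$. Apply Proposition \ref{equivdefhom}(4), say with $\varepsilon=1$, to the valuation $v_{\xi_0}$ to obtain a $\Z G$-chain map $A\colon C_*(\tilde{X})\to C_*(\tilde{X})$, chain homotopic to the identity, with $v_{\xi_0}(A(x))\geq v_{\xi_0}(x)+1$ for all nonzero $x\in C_i(\tilde{X})$, $i\leq k$. Because $A$ is $\Z G$-linear it is determined by its values on the finitely many basis lifts $\tilde{\sigma}$ of dimension $\leq k$, and each $A(\tilde{\sigma})$ is a finite $\Z$-combination of translates $g\tilde{\sigma}_j$. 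Consequently the finitely many continuous functions $\xi\mapsto v_\xi(A(\tilde{\sigma}))-v_\xi(\tilde{\sigma})$ are all $\geq 1$ at $\xi=\xi_0$, so they are $\geq \tfrac12$ on a common neighborhood $U\subseteq S(X)$ of $\xi_0$.

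The main obstacle I anticipate is upgrading this basis-level inequality to arbitrary chains, so as to apply Proposition \ref{equivdefhom}(4) in the reverse direction for $\xi\in U$ with the \emph{same} chain map $A$. The key point is that distinct basis elements $\tilde{\sigma}_j$ cannot cancel in $C_*(\tilde{X})$, which forces the equality $v_\xi(\sum_j r_j\tilde{\sigma}_j)=\min_j v_\xi(r_j\tilde{\sigma}_j)$ for $r_j\in\Z G$; together with axiom (2) and the valuation inequality this yields $v_\xi(r_jA(\tilde{\sigma}_j))\geq v_\xi(r_j\tilde{\sigma}_j)+\tfrac12$ for each $j$, whence the global bound $v_\xi(A(x))\geq v_\xi(x)+\tfrac12$. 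Thus $A$ witnesses $\xi\in\Sigma^k(X,\Z)$ for every $\xi\in U$, proving openness.
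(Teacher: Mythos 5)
Your proof is correct and follows the same route the paper implicitly intends: part (2) follows because $C_i(\tilde{X})=0$ (and hence $H_i(\tilde{X},N)=0$ for subcomplexes $N$) in degrees above $\dim X$, and openness is obtained by replacing the height-function/compactness argument used for $\Sigma^k(X)$ with the valuation/finite-basis argument, exactly as the appendix's chain-level criterion suggests. One small finishing touch: after obtaining $v_\xi(A(x))\geq v_\xi(x)+\tfrac12$ for all $\xi\in U$, you should observe that the iterates $A^m$ (still $\Z G$-chain maps chain homotopic to the identity) give $v_\xi(A^m(x))\geq v_\xi(x)+\tfrac{m}{2}$, so that the ``for every $\varepsilon>0$'' clause in Proposition \ref{equivdefhom}(4) is indeed met for every $\xi\in U$.
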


\begin{corollary}\label{homvshom}
Let $X$ be a finite connected CW-complex. Then
\begin{enumerate}
\item $\Sigma^1(X)=\Sigma^1(X,\Z)$.
\item $\Sigma^k(X)\subset \Sigma^k(X,\Z)$ for $k\geq 2$.
\end{enumerate}
\end{corollary}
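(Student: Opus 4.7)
The strategy is to invoke the equivalent conditions from Propositions \ref{equivdef} and \ref{equivdefhom}: condition (4) for the forward inclusion in both parts (building a chain map from an equivariant homotopy), and condition (3) for the reverse direction of part (1) (converting homology triviality into homotopy triviality using the relative Hurewicz map).

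For $\Sigma^k(X)\subset\Sigma^k(X,\Z)$ with $k\geq 1$ (this covers part (2) and one direction of part (1)), start with the equivariant cellular homotopy $\tilde H:\tilde X^{(k)}\times[0,1]\to\tilde X$ given by Proposition \ref{equivdef}(4). I extend $\tilde H$ equivariantly and cellularly to all of $\tilde X$ by induction on skeleta: on each $G$-orbit of an $(n+1)$-cell, the data already prescribed on $D^{n+1}\times\{0\}\cup S^n\times[0,1]$ always extends to $D^{n+1}\times[0,1]$, since the former is a retract of the latter. The induced map $A=(\tilde H_1)_\ast:C_\ast(\tilde X)\to C_\ast(\tilde X)$ is a $\Z G$-chain map, and the chain homotopy supplied by $\tilde H$ exhibits $A$ as chain-homotopic to the identity. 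Choosing the valuation $v$ on $C_\ast(\tilde X)$ associated to the height function $h_\xi$, the inequality $h_\xi(\tilde H_1(\tilde x))-h_\xi(\tilde x)\geq\varepsilon$ on the $k$-skeleton translates directly into $v(A(x))\geq v(x)+\varepsilon$ for all nonzero $x\in C_i(\tilde X)$ with $i\leq k$, which is condition (4) of Proposition \ref{equivdefhom}. Hence $\xi\in\Sigma^k(X,\Z)$.

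For the reverse inclusion $\Sigma^1(X,\Z)\subset\Sigma^1(X)$, I work with condition (3) of each proposition. Given a neighborhood $N$ of $\infty$ with respect to $\xi$ and a suitable $N'\subset N$ from Proposition \ref{equivdefhom}(3), the goal is to show $j_\#:\pi_l(\tilde X,N')\to\pi_l(\tilde X,N)$ is trivial for $l\leq 1$. Since $\tilde X$ is the universal cover, $\pi_1(\tilde X)=0$ and, by the absolute Hurewicz theorem, $H_1(\tilde X)=0$. The long exact sequences of the pairs $(\tilde X,N)$ and $(\tilde X,N')$ collapse to the identifications
\[
\pi_1(\tilde X,N)\cong\pi_0(N),\qquad H_1(\tilde X,N)\cong\ker\bigl(H_0(N)\to\Z\bigr),
\]
and similarly for $N'$. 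Under these identifications the Hurewicz map $\pi_1(\tilde X,N)\to H_1(\tilde X,N)$ becomes the injective pointed-set map $\pi_0(N)\hookrightarrow\ker(H_0(N)\to\Z)$ sending a component $[C]$ to $[C]-[C_0]$, where $C_0$ is the basepoint component. Triviality of $j_\ast:H_1(\tilde X,N')\to H_1(\tilde X,N)$, together with commutativity of the Hurewicz square, then forces every element of $\pi_0(N')$ to land in the basepoint component of $N$, so $j_\#$ is trivial on $\pi_1$; the $l=0$ case is immediate since $\tilde X$ is path-connected and $N'\neq\emptyset$. Proposition \ref{equivdef}(3) now gives $\xi\in\Sigma^1(X)$.

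The main obstacle is this reverse direction. What makes the argument work only for $k=1$ is that simple-connectedness of $\tilde X$ collapses the relative pair invariants to $\pi_0$ and reduced $H_0$ of $N$, where the Hurewicz comparison degenerates into the injection of a pointed set into the free abelian group it generates. For $k\geq 2$ the relative Hurewicz map is typically far from injective, since $H_k(\tilde X,N')$ carries cycles with no representatives in $\pi_k$, and this is precisely why one expects only the inclusion of (2) and not equality in general.
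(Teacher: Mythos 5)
Your proof is correct, and the forward inclusion $\Sigma^k(X)\subset\Sigma^k(X,\Z)$ via the route (\ref{equivdef})(4) $\Rightarrow$ (\ref{equivdefhom})(4) is exactly what the paper's terse ``it is easy to see'' is gesturing at: lift the movement homotopy to an equivariant cellular homotopy, take the induced $\Z G$-chain map $A=(\tilde H_1)_\ast$ together with the induced chain homotopy, and check the valuation estimate. One small thing worth making explicit is that you should fix a concrete valuation compatible with $h_\xi$ (e.g.\ $v(\sigma)=\inf_{\bar\sigma}h_\xi$ on one cell per $G$-orbit, then extend equivariantly); this is a valid valuation, and since Proposition~\ref{equivdefhom}(4) is a valuation-independent condition (the $\varepsilon$ there can be taken arbitrarily large by iteration, while any two valuations differ by a uniform bound on the finitely many orbit representatives), exhibiting condition (4) for this one choice suffices.

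Where you genuinely diverge from the paper is the reverse direction $\Sigma^1(X,\Z)\subset\Sigma^1(X)$. The paper's one-line hint (``the chain homotopy of Proposition~\ref{equivdefhom}~(3) can easily be used to realize a homotopy as in Proposition~\ref{equivdef}~(3)'') points towards a constructive argument: from the algebraic chain homotopy $\delta$ with $\partial\delta+\delta\partial=1-A$ one hand-builds, dimension by dimension on $\tilde X^{(1)}$, an actual cellular homotopy pushing the $1$-skeleton towards $+\infty$. You instead argue via condition (3) and the relative Hurewicz comparison in the bottom dimension, reducing $\pi_1(\tilde X,N)$ and $H_1(\tilde X,N)$ (with $\tilde X$ simply connected) to $\pi_0(N)$ and $\widetilde H_0(N)$, where the Hurewicz map is the tautological injection of a pointed set into the free abelian group it generates. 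Both routes are correct; yours is conceptually cleaner, avoids the mild awkwardness of turning a $2$-chain into a genuine $2$-disk, and makes the obstruction for $k\geq 2$ transparent (non-injectivity of the relative Hurewicz map), which the paper does not comment on.
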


\begin{proof}
It is easy to see that Condition (3) of Proposition \ref{equivdef} implies Condition (3) of Proposition \ref{equivdefhom} so that $\Sigma^k(X)\subset \Sigma^k(X,\Z)$ for $k\geq 1$. Also, for $k=1$ the chain homotopy of Proposition \ref{equivdefhom} (3) can easily be used to realize a homotopy as in Proposition \ref{equivdef} (3).
\end{proof}

It follows from the examples of Bestvina and Brady \cite{besbra} that in general $\Sigma^2(X) \not=\Sigma^2(X,\Z)$, see \cite{memewy}.

\section{Novikov rings and homology}\label{novring}
Let $G$ be a finitely generated group and $\xi\in S(G)$. We then let
\begin{eqnarray*}
 \widehat{\Z G}_\xi&=&\left\{\left.\sum_{g\in G}n_gg\,\right|\,\mbox{for all }t\in \R \,\,\,\#\{g\,|\,n_g\not=0\mbox{ and }\xi(g)>t\}<\infty\right\},
\end{eqnarray*}
a ring containing the group ring $\Z G$. If $\xi:G\to \R$ is injective, this is the \em Novikov ring \em first defined in \cite{noviko}. For general $\xi$ we call it the \em Novikov-Sikorav \em ring, first introduced in \cite{sikora}.

The relation to the Bieri-Neumann-Strebel-Renz invariants was immediately apparent once the work of Sikorav in \cite{sikora} became known. This relation also extends to our situation and is explained in Proposition \ref{sigmanov} below.

If $X$ is a finite connected CW-complex, $p:\overline{X}\to X$ a regular covering space with $\pi_1(\overline{X})\subset\ker\, \xi$, we get an induced homomorphism, also denoted by $\xi:G\to \R$, where $G=\pi_1(X)/\pi_1(\overline{X})$. We then obtain a finitely generated free $\widehat{\Z G}_\xi$-chain complex by setting
\begin{eqnarray*}
 C_\ast(X;\widehat{\Z G}_\xi)&=&\widehat{\Z G}_\xi\otimes_{\Z G}C_\ast(\overline{X})
\end{eqnarray*}
and we denote by
\begin{eqnarray*}
 H_\ast(X;\widehat{\Z G}_\xi)&=&H_\ast(C_\ast(X;\widehat{\Z G}_\xi))
\end{eqnarray*}
the resulting homology, called the \em Novikov-Sikorav homology \em(\em Novikov homology \em if $\xi$ is injective).

The case of the universal cover is the one directly related to $\Sigma^k(X)$, but the case of abelian covers plays an important role for the cup-length estimates of the Lusternik-Schnirelmann categories.

\begin{proposition}\label{sigmanov}
 Let $X$ be a finite connected CW-complex, $\xi\in H^1(X;\R)$ non-zero, $\tilde{X}$ the universal cover of $X$ and $G=\pi_1(X)$. Then the following are equivalent.
\begin{enumerate}
 \item $\xi\in \Sigma^k(X;\Z)$.
 \item $H_i(X;\widehat{\Z G}_{-\xi})=0$ for $i\leq k$.
\end{enumerate}
\end{proposition}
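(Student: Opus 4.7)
The plan is to use Proposition \ref{equivdefhom}(4) as the bridge between the two conditions. That proposition rephrases $\xi \in \Sigma^k(X;\Z)$ as the existence, for every $\varepsilon > 0$, of a $\Z G$-chain map $A\colon C_*(\tilde X)\to C_*(\tilde X)$ chain homotopic to the identity via some $D$ (so $A - \id = \partial D + D\partial$) and satisfying $v(A(x)) \geq v(x) + \varepsilon$ on nonzero chains in degrees $\leq k$. This is exactly the data one needs to manipulate formal series over $\widehat{\Z G}_{-\xi}$, whose defining condition -- for each $r\in\R$ only finitely many support elements $g$ have $\xi(g) < r$ -- says precisely that sequences whose $v$-values tend to $+\infty$ are summable.

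For $(1)\Rightarrow(2)$ I would iterate $A$ and form
\[
s := -\sum_{j=0}^{\infty} D\circ A^{j},
\]
interpreted on the Novikov complex in degrees $\leq k$. For a basis cell $e\in C_i(\tilde X)$ with $i\leq k$, the iterate $A^{j}(e)$ has coefficients supported on $\{g : \xi(g) \geq v(e) + j\varepsilon\}$, and $D$ distorts $v$ by at most a uniform constant depending on the chosen cellular basis. Hence below any threshold only finitely many $j$ contribute, so $s(e)$ converges in $\widehat{\Z G}_{-\xi}$. Using $\partial A = A\partial$ and $\partial D + D\partial = A - \id$, one gets the telescoping identity
\[
\partial s + s\partial = -\sum_{j=0}^{\infty}(\partial D + D\partial)A^{j} = -\sum_{j=0}^{\infty}(A^{j+1} - A^{j}) = \id - A^{\infty},
\]
which collapses to $\id$ because $v(A^{j}(x))\to\infty$ forces $A^{\infty}=0$ in the Novikov complex. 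Thus $s$ is a partial chain contraction in degrees $\leq k$, giving $H_i(X;\widehat{\Z G}_{-\xi}) = 0$ for $i\leq k$.

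For $(2)\Rightarrow(1)$ I would reverse the procedure by truncation. Since $C_*(X;\widehat{\Z G}_{-\xi})$ is a finitely generated free chain complex, the vanishing hypothesis permits an inductive construction of a partial $\widehat{\Z G}_{-\xi}$-linear chain contraction $\bar s$ in degrees $\leq k$, so that $\partial\bar s + \bar s\partial = \id$ there. Expand each matrix entry of $\bar s$ in the chosen $\Z G$-basis; each such entry is a Novikov series whose $\{g : \xi(g)\leq N\}$-part is a finite sum, so truncating produces a $\Z G$-linear map $s_N$. The $\Z G$-linear chain map
\[
A_N := \id - \partial s_N - s_N\partial
\]
is automatically chain homotopic to the identity via $-s_N$, and it equals $\partial(\bar s - s_N) + (\bar s - s_N)\partial$ in the Novikov complex. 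Since every coefficient of $\bar s - s_N$ is supported on $\{g : \xi(g) > N\}$ and $\partial$ distorts $v$ by at most a fixed constant $C$ coming from the cellular structure, we obtain $v(A_N(x)) \geq v(x) + N - C$ on degrees $\leq k$. Choosing $N > \varepsilon + C$ verifies condition (4) of Proposition \ref{equivdefhom}, hence $\xi\in\Sigma^k(X;\Z)$.

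The main technical point, common to both directions, is the convergence bookkeeping in $\widehat{\Z G}_{-\xi}$: verifying that the various infinite sums actually lie in the Novikov ring, i.e.\ that the supports accumulate only at $+\infty$ in $\xi$-value. Once the shift constants coming from $\partial$ and from the basis of $C_*(\tilde X)$ are accounted for, both implications reduce to the formal telescoping or truncation identity combined with the valuation shift provided by Proposition \ref{equivdefhom}(4).
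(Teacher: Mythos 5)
Your argument is correct and follows essentially the same route as the paper: both directions go through Proposition~\ref{equivdefhom}(4), using geometric-series convergence over $\widehat{\Z G}_{-\xi}$ for $(1)\Rightarrow(2)$ and truncation of a Novikov-coefficient chain contraction for $(2)\Rightarrow(1)$. The only cosmetic difference is that for $(1)\Rightarrow(2)$ you write an explicit partial contraction $s=-\sum_j D A^j$, whereas the paper observes that $\id-A$ is simultaneously invertible over $\widehat{\Z G}_{-\xi}$ (with inverse $\sum_j A^j$, converging by the same valuation shift) and null-homotopic, hence both an isomorphism and zero on homology.
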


Note that for testing $\xi\in \Sigma^k(X;\Z)$ we have to use the Novikov-Sikorav ring with respect to $-\xi$. The reason for this is that we allow infinitely many non-zero coefficients in elements of $\widehat{\Z G}_\xi$ in the ``negative direction'', which is in line with the original definition of the Novikov ring \cite{N1}. But to adhere to the convention of \cite{bieren} one has to complete in the ``positive direction''. In order to stick to the convention used in \cite{fasc}, we have to introduce the minus-sign above.

\begin{proof}[Proof of Proposition \ref{sigmanov}]
 (1) $\Longrightarrow$ (2): Let $A:C_\ast(\tilde{X})\to C_\ast(\tilde{X})$ be the chain map chain homotopic to the identity, given by Proposition \ref{equivdefhom} (4). Then ${\rm id}-A:C_i(X;\widehat{\Z G}_{-\xi})\to C_i(X;\widehat{\Z G}_{-\xi})$ is an isomorphism with inverse ${\rm id}+A+A^2+\ldots$, which converges over $\widehat{\Z G}_{-\xi}$ by the valuation property in Proposition \ref{equivdefhom}(4). So the map on homology is both an isomorphism and zero, which means that the homology vanishes.

(2) $\Longrightarrow$ (1): As $C_\ast(X;\widehat{\Z G}_{-\xi})$ is free and bounded below, the vanishing of its homology groups up to degree $k$ guarantees the existence of a chain homotopy $\delta:C_\ast(X;\widehat{\Z G}_{-\xi})\to C_{\ast+1}(X;\widehat{\Z G}_{-\xi})$ with $\partial_{i+1}\delta_i+\delta_{i-1}\partial_i={\rm id}$ for $i\leq k$. ``Cutting off'' gives a chain homotopy $\bar{\delta}:C_\ast(\tilde{X})\to C_\ast(\tilde{X})$ with $\partial_{i+1}\bar{\delta}_i+\bar{\delta}_{i-1}\partial_i={\rm id}-A$. Then $A$ is a chain map homotopic to the identity, and by approximating $\delta$ sufficiently well with $\bar{\delta}$, condition (4) of Proposition \ref{equivdefhom} is satisfied.
\end{proof}

\section{Relations to the Lusternik-Schnirelmann category of closed 1-forms}\label{category}

We now want to describe a connection to the Lusternik-Schnirelmann theory of closed 1-forms, which has been introduced in a series of papers \cite{farber,farbe4,farkap} and for which more information, in particular on applications and calculations, can be found in \cite{farbook,fasc}. Indeed, there exist various different notions, and the one related closest to $\Sigma^k(X)$ is denoted by $\Cat(X,\xi)$. Here again $X$ is a finite CW-complex and $\xi\in H^1(X;\R)$. Let us recall the definition from \cite{fasc}.

\begin{definition}\label{bigcat}
 Let $X$ be a finite CW-complex and $\xi\in H^1(X;\R)$. Fix a closed 1-form $\omega$ representing $\xi$. Then $\Cat(X,\xi)$ is defined as the minimal integer $k$ such that there exists an open subset $U\subset X$ satisfying
\begin{enumerate}
 \item $\cat_X(X-U)\leq k$.
\item for some homotopy $h:U\times [0,\infty)\to X$ one has
\[h(x,0)=x\quad \mbox{ and }\quad \lim_{t\to\infty}\int_x^{h_t(x)}\omega=-\infty\]
for any point $x\in U$.
\item the limit in (2) is uniform in $x\in U$.
\end{enumerate}
The integral is taken along the path $\gamma:[0,t]\to X$ given by $\gamma(\tau)=h(x,\tau)$.
\end{definition}

Here $\cat_X(A)$ for $A\subset X$ is the minimal number $i$ such that there exist open sets $U_1,\ldots,U_i\subset X$ covering $A$, each of which is null-homotopic in $X$.
The invariant $\Cat(X,\xi)$ originally appeared in \cite{farbe4}. It is easy to see that it does not depend on positive multiples of $\xi$, therefore it is well defined for $\xi\in S(X)$. Note that it is also defined for $\xi=0$, in which case we recover the original Lusternik-Schnirelmann category of $X$, see \cite{fasc} for details.

The connection to $\Sigma^k(X)$ can now be described as follows. Because of our sign conventions, we obtain another minus-sign in front of a $\xi$.

\begin{theorem}\label{cattheo}
Let $X$ be a finite connected CW-complex and $\xi\in H^1(X;\R)$ be nonzero. Write $n=\dim X$. If $-\xi\in \Sigma^k(X)$ for some $k\leq n$. Then $\Cat(X,\xi)\leq n-k$.
\end{theorem}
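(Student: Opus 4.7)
The condition $-\xi\in\Sigma^k(X)$ applied to the closed 1-form $-\omega$, via Proposition~\ref{equivdef}(4), yields an equivariant cellular homotopy $\tilde H:\tilde X^{(k)}\times[0,1]\to\tilde X$ with $\tilde H_0={\rm id}$ and $h_{-\xi}(\tilde H_1(\tilde x))-h_{-\xi}(\tilde x)\geq\varepsilon$ for some $\varepsilon>0$. Pushing $\tilde H$ down to $X$ produces a cellular homotopy $H:X^{(k)}\times[0,1]\to X$ whose tracks satisfy $\int_x^{H_1(x)}\omega\leq-\varepsilon$. Cellularity gives $H_1(X^{(k)})\subset X^{(k)}$, so $H$ may be iterated; compactness of $X^{(k)}\times[0,1]$ also supplies a uniform bound $\left|\int_x^{H_t(x)}\omega\right|\leq M$ for all $x\in X^{(k)}$ and $t\in[0,1]$.

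\textbf{A neighbourhood of the $k$-skeleton.} Since $(X,X^{(k)})$ is a CW-pair, I choose an open neighbourhood $U\supset X^{(k)}$ with $\bar U$ compact together with a deformation retraction $\rho:U\times[0,1]\to U$ onto $X^{(k)}$, arranged so that $X-U\subset V_{k+1}\cup\cdots\cup V_n$, where $V_j$ is the disjoint union of one small open Euclidean ball $B_e$ inside the interior of each open $j$-cell $e$ of $X$. Each $V_j$ is null-homotopic in $X$: on each component $B_e$ first contract to its centre $c_e$, then slide $c_e$ to a common basepoint $x_0\in X$ along a preselected path. Because $V_j$ is a topological disjoint union, these independent deformations assemble into a single homotopy of $V_j\hookrightarrow X$ to a constant map. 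Consequently $\cat_X(V_j)=1$ for each $j$, giving $\cat_X(X-U)\leq n-k$.

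\textbf{Assembling the homotopy.} For $x\in U$ set $x_1=\rho(x,1)\in X^{(k)}$ and inductively $x_m=H(x_{m-1},1)\in X^{(k)}$ for $m\geq2$, and define $h:U\times[0,\infty)\to X$ by
\[
h(x,t)=\begin{cases}\rho(x,t),&t\in[0,1],\\ H(x_m,t-m),&t\in[m,m+1],\,m\geq1.\end{cases}
\]
The two branches agree at $t=1$ (both equal $x_1$) and at $t=m+1$ for $m\geq1$ (both equal $x_{m+1}$, using $H_0={\rm id}$), so $h$ is continuous, and $h_0={\rm id}_U$. Splitting the integral along the successive tracks $x\rightsquigarrow x_1\rightsquigarrow x_2\rightsquigarrow\cdots\rightsquigarrow x_m\rightsquigarrow h(x,t)$ for $t\in[m,m+1]$ gives
\[
\int_x^{h(x,t)}\omega\;\leq\;C_\rho\;-\;(m-1)\varepsilon\;+\;M,
\]
where $C_\rho=\sup_{x\in U,\,u\in[0,1]}\int_x^{\rho(x,u)}\omega$ is finite by compactness of $\bar U$. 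The right-hand side tends to $-\infty$ uniformly in $x\in U$ as $t\to\infty$. Together with $\cat_X(X-U)\leq n-k$, the three conditions of Definition~\ref{bigcat} are satisfied, yielding $\Cat(X,\xi)\leq n-k$.

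The main technical difficulty is arranging the neighbourhood $U$ so that it simultaneously deformation-retracts onto $X^{(k)}$ and has complement covered by only $n-k$ null-homotopic open pieces. This is handled cell-by-cell from the top dimension downward, removing small closed balls from the interiors of cells of dimension $>k$ and radially retracting the leftover collars; the details are standard Lusternik--Schnirelmann bookkeeping.
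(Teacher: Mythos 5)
Your proposal is correct and follows essentially the same route as the paper: extract a cellular homotopy of $X^{(k)}$ from $-\xi\in\Sigma^k(X)$ via Proposition~\ref{equivdef}, iterate it (precomposed with a deformation retraction of a neighbourhood $U$ onto $X^{(k)}$) to get the controlled push to $-\infty$, and invoke $\cat_X(X-U)\le n-k$ since $X-U$ lies in the complement of the $k$-skeleton. The only difference is cosmetic: the paper simply cites the well-known bound $\cat_X(X-X^{(k)})\le n-k$ and monotonicity of $\cat_X$, whereas you unpack it by exhibiting an explicit open cover of $X-U$ by the ball-unions $V_{k+1},\dots,V_n$; your version also spells out the integral estimate $C_\rho-(m-1)\varepsilon+M$ that the paper leaves as ``it is easy to see.''
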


\begin{proof}
Let $U$ be a small open neighborhood of $X^{(k)}$ in $X$ which deformation retracts to $X^{(k)}$. Let $H:X^{(k)}\times [0,1]\to X$ be a homotopy starting with inclusion which lifts to a homotopy as in Proposition \ref{equivdef} (3). Now define $h:X^{(k)}\times [0,\infty)\to X$ by $h(x,t)=H(H^m(x,1),t-m)$, where $m$ is an integer with $t\in[m,m+1]$. Notice that $H$ is cellular so $H^m(x,1)\in X^{(k)}$. It is easy to see that $h$ combined with the deformation retraction of $U$ to $X^{(k)}$ gives a homotopy as required in Definition \ref{bigcat}. It is well known that $\cat_X(X-X^{(k)})\leq \dim(X)-k$ so the result follows.
\end{proof}

There are other definitions for a Lusternik-Schnirelmann category of $\xi$, denoted by $\cat(X,\xi)$ and $\cat^1(X,\xi)$, see \cite{farber,farbook,farkap,fasc}, satisfying
\[
 \cat(X,\xi)\leq \cat^1(X,\xi)\leq \Cat(X,\xi).
\]
Therefore Theorem \ref{cattheo} provides an upper bound for these as well.

%

\section{A Hurewicz type result}\label{hurewi}

We can get a converse of Corollary \ref{homvshom} if we assume that $\xi\in \Sigma^2(\pi_1(X))$. This can be described in the following way.

\begin{definition}
Let $G$ be a finitely presented group and $\xi:G\to \R$ a nonzero homomorphism. Let $X$ be a finite connected CW-complex with $\pi_1(X)=G$. We say $\xi\in \Sigma^2(G)$, if there is a $\lambda\geq 0$ such that $j_\#:\pi_l(N_i)\to \pi_l(N_{i-\lambda})$ is trivial for $l\leq 1$ and every $i\in \R$.
\end{definition}

Note that we can think of $\xi\in H^1(X;\R)$ so neighborhoods of $\infty$ are defined as above.

\begin{theorem}\label{hurewicz}
Let $X$ be a finite connected CW-complex and $\xi\in H^1(X;\R)$ nonzero. Let $k\geq 2$. If $\xi\in \Sigma^2(\pi_1(X))\cap \Sigma^k(X,\Z)$, then $\xi\in \Sigma^k(X)$.
\end{theorem}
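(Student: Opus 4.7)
By Proposition~\ref{equivdef}(3), it suffices to show that for every neighborhood $N$ of $\infty$ with respect to $\xi$ there is $N'\subset N$ such that $\pi_l(\tilde X,N')\to\pi_l(\tilde X,N)$ is trivial for every $l\leq k$; I would prove this by induction on $l$. The base cases $l=0,1$ follow from $\xi\in\Sigma^2(\pi_1(X))$: since $\tilde X$ is simply connected, the long exact sequence of the pair identifies $\pi_1(\tilde X,N)$ with $\pi_0(N)$, and the required triviality on $\pi_0$ and $\pi_1$ of the inclusion of neighborhoods is exactly what $\Sigma^2(\pi_1(X))$ provides.

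For the inductive step $l\geq 2$, given $N$, I would construct a descending sequence $N=N_0\supset N_1\supset\cdots\supset N_m$ where each $N_{j+1}$ is chosen deep enough into the end to satisfy simultaneously: (a) $\pi_i(N_{j+1})\to\pi_i(N_j)$ trivial for $i\leq 1$, from $\Sigma^2(\pi_1(X))$; (b) $H_i(\tilde X,N_{j+1})\to H_i(\tilde X,N_j)$ trivial for $i\leq k$, from $\Sigma^k(X;\mathbb{Z})$ together with Proposition~\ref{equivdefhom}; and (c) $\pi_i(\tilde X,N_{j+1})\to\pi_i(\tilde X,N_j)$ trivial for $i\leq l-1$, from the inductive hypothesis. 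Set $N':=N_m$. Given $[\alpha]\in\pi_l(\tilde X,N')$, its Hurewicz image in $H_l(\tilde X,N')$ vanishes in $H_l(\tilde X,N)$ by (b). Condition (c) puts the pair into a situation that behaves as $(l-1)$-connected when read through the telescope of nested neighborhoods, so the relative Hurewicz theorem gives an isomorphism $\pi_l(\tilde X,-)/\pi_1(-)\cong H_l(\tilde X,-)$ after passing to $\pi_1$-coinvariants; the triviality of $\pi_1(N')\to\pi_1(N)$ coming from (a) annihilates the $\pi_1$-action on the image of $[\alpha]$, upgrading the vanishing of its Hurewicz image to the vanishing of the image of $[\alpha]$ itself in $\pi_l(\tilde X,N)$.

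The main obstacle is handling the $\pi_1$-action on relative homotopy groups rigorously: the relative Hurewicz map is clean only modulo this action, but we need honest triviality of maps of groups, and condition (c) provides only triviality of maps between successive pairs rather than triviality of the pairs themselves. One rigorous way to carry this through is to replace each $N_j$ by the universal cover $\widetilde{N_j}$ of its basepoint component inside an enlarged homotopy-equivalent ambient space $Z_j\simeq\tilde X$ (built as a mapping cylinder of $\widetilde{N_j}\to N_j$ glued to $\tilde X$ along $N_j$). Van Kampen makes $Z_j$ simply connected, so the pair $(Z_j,\widetilde{N_j})$ has both pieces simply connected and the relative Hurewicz map is a clean isomorphism there; the lift $N_{j+1}\to\widetilde{N_j}$ produced by (a) then gives the factorization of the map of pairs needed to transport the vanishing through the telescope. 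An alternative formalization, which I find conceptually cleanest, is to work directly in the pro-homotopy category of the end: there the hypotheses say pro-$\pi_1$ of neighborhoods is trivial and pro-$H_\ast$ of the pair is trivial in the relevant range, and a pro-Hurewicz theorem delivers the triviality of pro-$\pi_\ast$ of the pair.
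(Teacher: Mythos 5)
Your argument is essentially the paper's proof, which is also a pro-Hurewicz induction: the paper factors the key step into two lemmas asserting that $\{\pi_2(\tilde{X},N)\}$ and $\{H_2(\tilde{X},N)\}$ are pro-isomorphic (using $\xi\in\Sigma^2(\pi_1(X))$) and, for $l\geq 3$, that $\{\pi_l(\tilde{X},N)\}$ and $\{H_l(\tilde{X},N)\}$ are pro-isomorphic (using $\xi\in\Sigma^{l-1}(X)$), then climbs degree by degree exactly as your telescope does. Your careful treatment of the $\pi_1$-action on the relative homotopy groups supplies precisely the detail the paper elides with the phrase ``as in a typical proof of the classical Hurewicz Theorem.''
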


The theorem will follow from two lemmas.

\begin{lemma}\label{firstpart}
Let $X$ be a finite connected CW-complex and $\xi\in H^1(X;\R)$ nonzero. If $\xi\in \Sigma^2(\pi_1(X))$, then $\{\pi_2(\tilde{X},N)\}$ and $\{H_2(\tilde{X},N)\}$ are pro-isomorphic.
\end{lemma}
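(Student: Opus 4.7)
The plan is to apply a pro-category five-lemma to the Hurewicz ladder between the long exact sequences of homotopy and homology of the pair $(\tilde{X}, N_i)$. The hypothesis $\xi \in \Sigma^2(\pi_1(X))$ translates directly to the statement that the inverse system $\{N_i\}$ of neighborhoods of infinity has pro-trivial $\pi_0$ and pro-trivial $\pi_1$, basepoints taken along a fixed path to infinity and hence in the preferred unbounded components. Since $\tilde{X}$ is simply connected, the absolute Hurewicz theorem also furnishes $\pi_2(\tilde{X}) \cong H_2(\tilde{X})$.

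For each $i$ I would assemble the commutative ladder
\[
\begin{array}{ccccccccc}
\pi_2(N_i) & \to & \pi_2(\tilde{X}) & \to & \pi_2(\tilde{X}, N_i) & \to & \pi_1(N_i) & \to & 0 \\
\downarrow & & \downarrow\cong & & \downarrow h & & \downarrow & & \\
H_2(N_i) & \to & H_2(\tilde{X}) & \to & H_2(\tilde{X}, N_i) & \to & H_1(N_i) & \to & 0
\end{array}
\]
with vertical Hurewicz maps. The pro-five-lemma will produce the desired pro-isomorphism $h$ in the middle column once the two outermost vertical maps are verified to be pro-isomorphisms. The $\pi_1/H_1$ column is immediate: pro-triviality of $\pi_1(N_i)$ forces pro-triviality of $H_1(N_i)$ by functoriality of abelianization, so both pro-systems are pro-zero.

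For the $\pi_2/H_2$ column I would invoke the five-term exact sequence for the Serre spectral sequence of the universal-cover fibration $\tilde{N}_i \to N_i \to B\pi_1(N_i)$, which (using $H_1(\tilde{N}_i) = 0$) reads
\[ H_3(\pi_1(N_i)) \to H_2(\tilde{N}_i)_{\pi_1(N_i)} \to H_2(N_i) \to H_2(\pi_1(N_i)) \to 0. \]
A trivial group homomorphism $G' \to G$ induces a null-homotopic map $BG' \to BG$ and hence zero on $H_n$ for $n \geq 1$, so both $\{H_2(\pi_1(N_i))\}$ and $\{H_3(\pi_1(N_i))\}$ are pro-trivial. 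The sequence therefore yields a pro-iso $\{H_2(\tilde{N}_i)_{\pi_1(N_i)}\} \to \{H_2(N_i)\}$, which via Hurewicz on the simply-connected $\tilde{N}_i$ and the identification $\pi_2(\tilde{N}_i) = \pi_2(N_i)$ becomes a pro-iso $\{\pi_2(N_i)_{\pi_1(N_i)}\} \to \{H_2(N_i)\}$. Finally the quotient $\{\pi_2(N_i)\} \to \{\pi_2(N_i)_{\pi_1(N_i)}\}$ is itself a pro-iso: when $\pi_1(N_{i+\lambda}) \to \pi_1(N_i)$ is trivial, every element of the form $g\cdot\alpha - \alpha$ in $\pi_2(N_{i+\lambda})$ maps to $1\cdot\bar{\alpha} - \bar{\alpha} = 0$ in $\pi_2(N_i)$ by naturality of the $\pi_1$-action, so the kernel of the quotient is pro-trivial.

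The main obstacle I expect is keeping the pro-level manipulations of the spectral sequence honest: one has to confirm that the five-term exact sequence is genuinely natural under the inclusions $N_{i+\lambda} \hookrightarrow N_i$ and that the pro-vanishing of the group-homology terms combined with the coinvariant manipulation really collapses into the clean statement that Hurewicz is a pro-isomorphism on $\{N_i\}$. A secondary technicality is restricting to the preferred unbounded component of each $N_i$, which is permissible by the pro-$\pi_0(N_i) = 0$ part of the hypothesis and should not affect the resulting pro-systems.
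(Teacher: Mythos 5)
Your argument is correct in substance, but it takes a genuinely different route from the paper. The paper's proof is a direct pro-version of the relative Hurewicz theorem: it observes that $\mathrm{Im}(\pi_2(\tilde{X},N_i)\to\pi_2(\tilde{X},N_{i-\lambda}))$ coincides with $\mathrm{Im}(\pi_2(\tilde{X})\to\pi_2(\tilde{X},N_{i-\lambda}))$ once loops in $N_i$ bound in $N_{i-\lambda}$, so these images are abelian, and then it re-runs the Spanier-style proof of the relative Hurewicz theorem at the pro-level, using the vanishing of the $\pi_1(N_i)$-action after one further enlargement $\lambda$ to identify what would ordinarily be the Hurewicz kernel. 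You instead work entirely with absolute Hurewicz maps: on $\tilde{X}$ (via simple connectivity) and on the universal covers $\tilde{N}_i$, you import the Serre spectral sequence of $\tilde{N}_i\to N_i\to B\pi_1(N_i)$ to relate $\pi_2(N_i)$ to $H_2(N_i)$ pro-isomorphically, then feed everything through the pro-five-lemma on the long-exact-sequence ladder of the pair. Your approach buys you the use of only textbook theorems (absolute Hurewicz, Serre spectral sequence) at the cost of more machinery and several pro-level verifications: you must arrange the $N_i$ to be CW subcomplexes (or replace them by cofinal CW neighborhoods) so that universal covers and the spectral sequence behave, you must check naturality of the five-term sequence under the inclusions $N_{i'}\hookrightarrow N_i$, and you invoke a five-lemma in the pro-category for towers containing the non-abelian groups $\pi_2(\tilde{X},N_i)$ and $\pi_1(N_i)$, which works by an element chase but should be flagged explicitly. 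The paper's proof avoids the spectral sequence and reduces the bookkeeping to one abelianity observation, at the cost of leaving the diagram chase of the relative Hurewicz argument implicit ("details left to the reader"). Both are sound; yours is closer to a mechanical deduction from standard results, the paper's is shorter once one is willing to internalize the classical Hurewicz proof.
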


\begin{proof}
Notice that
\begin{eqnarray*}
\im(\pi_2(\tilde{X},N_i)\to \pi_2(\tilde{X},N_{i-\lambda}))&=&\im(\pi_2(\tilde{X})\to \pi_2(\tilde{X},N_{i-\lambda}))
\end{eqnarray*}
if every 1-sphere in $N_i$ bounds in $N_{i-\lambda}$, in particular the images become abelian. As $\xi\in\Sigma^2(\pi_1(X))$, we can define a homomorphism $\pi_2(\tilde{X},N_i)\to H_2(\tilde{X},N_{i-\lambda})$ as in a typical proof of the classical Hurewicz Theorem (after possibly increasing $\lambda$), see, for example, Spanier \cite{spanie}. The details will be left to the reader.
\end{proof}

\begin{lemma}\label{secondpart}
Let $X$ be a finite connected CW-complex and $\xi\in H^1(X;\R)$ nonzero. Let $k\geq 3$ and $\xi\in\Sigma^{k-1}(X)$. Then $\{\pi_k(\tilde{X},N)\}$ and $\{H_k(\tilde{X},N)\}$ are pro-isomorphic.
\end{lemma}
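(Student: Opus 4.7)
The plan is to prove a pro-category version of the relative Hurewicz theorem. The Hurewicz maps $h_N : \pi_k(\tilde{X}, N) \to H_k(\tilde{X}, N)$ are natural in $N$ and so assemble into a morphism of the two pro-systems indexed by neighborhoods of $\infty$; the task is to show this morphism is a pro-isomorphism.

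First I would establish two pro-connectedness statements. From $\xi \in \Sigma^{k-1}(X)$ and Proposition \ref{equivdef}(3), for every neighborhood $N$ of $\infty$ there is a smaller $N' \subset N$ with $j_\# : \pi_l(\tilde{X}, N') \to \pi_l(\tilde{X}, N)$ trivial for every $l \leq k-1$; that is, $\{(\tilde{X}, N)\}$ is pro-$(k-1)$-connected. I would then upgrade this to pro-simple-connectedness of $\{N\}$: since $\pi_1(\tilde{X}) = 0$, the connecting homomorphism $\pi_2(\tilde{X}, N') \to \pi_1(N')$ is surjective, and its composition with $\pi_1(N') \to \pi_1(N)$ factors as $\pi_2(\tilde{X}, N') \to \pi_2(\tilde{X}, N) \to \pi_1(N)$, which vanishes by the triviality in degree $2$ (available because $k \geq 3$). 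Hence $\pi_1(N') \to \pi_1(N)$ is trivial, so cofinally $\{N\}$ is pro-simply-connected. This is precisely the input the classical relative Hurewicz theorem demands of the subspace; no pro-triviality of higher $\pi_l(N)$ is needed.

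Next I would construct a pro-inverse $g$. Given $N$, iterate to obtain $N_3 \subset N_2 \subset N_1 \subset N$ along which both of the above pro-conditions hold between consecutive pairs, and (after a small cellular approximation of the height function) arrange each $N_i$ to be a subcomplex in an equivariant CW structure on $\tilde{X}$. Represent a class $\alpha \in H_k(\tilde{X}, N_3)$ by a cellular relative $k$-cycle $c = \sum n_i \sigma_i$; the attaching spheres of the $\sigma_i$ together with $\partial c$ assemble into a $(k-1)$-dimensional complex $Z$ mapped into $N_3$ whose image is null-homotopic as a map into $N_1$, which one builds skeleton-by-skeleton using pro-simple-connectedness of $\{N\}$ to start the induction and the pro-$(k-1)$-connectedness of $\{(\tilde{X},N)\}$ to extend across higher cells. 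Capping $Z$ off inside $N_1$ yields a map $(D^k, S^{k-1}) \to (\tilde{X}, N_1)$ whose class $g(\alpha) \in \pi_k(\tilde{X}, N_1)$ is the desired pro-inverse. A parallel construction one dimension up shows $g$ is well defined on homology and that $g \circ h$ and $h \circ g$ agree with the bonding maps after passing to slightly smaller neighborhoods, which is exactly the pro-iso condition.

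The main obstacle will be the well-definedness of $g$ on homology and the verification of the two compatibility triangles with the bonding maps; both reduce to the standard Hurewicz-inverse argument carried out one dimension higher, since two choices of null-homotopy for $Z$ differ by a $k$-sphere in $\tilde{X}$ whose class must die when pushed further into $\infty$. The hypothesis $k \geq 3$ is essential: it ensures that $\pi_k(\tilde{X}, N)$ is abelian and that the inductive construction of the null-homotopy of $Z$ has enough room, and it is what allows Step 1 to produce pro-simple-connectedness of $\{N\}$ from the single hypothesis $\xi \in \Sigma^{k-1}(X)$.
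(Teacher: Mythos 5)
Your proposal is correct and takes the same approach as the paper: the paper's proof is a single sentence (``similar to the proof of Lemma \ref{firstpart}, but easier, as we can define the homomorphism directly''), deferring to the standard relative Hurewicz argument adapted to the inverse system $\{(\tilde{X},N)\}$, which is precisely what you flesh out. Your observation that $\xi\in\Sigma^{k-1}(X)$ with $k\geq 3$ yields pro-triviality of $\{\pi_1(N)\}$ via the surjection $\pi_2(\tilde{X},N')\to\pi_1(N')$ is exactly the point that supplies the simply-connected-subspace hypothesis needed for the Hurewicz argument (and explains why $\pi_k(\tilde{X},N)$ being abelian for $k\geq 3$ makes this case ``easier'' than Lemma \ref{firstpart}).
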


\begin{proof}
This is similar to the proof of Lemma \ref{firstpart}, but easier, as we can define the homomorphism directly.
\end{proof}

These two Lemmas combine to a proof of Theorem \ref{hurewicz}.

\begin{remark}
Theorem \ref{hurewicz} also follows from Latour's Theorem 5.10 and 5.3 \cite{latour}.
\end{remark}

\section{Functoriality properties}\label{functor}

\begin{proposition}\label{domer}
Let $X$ and $Y$ be finite CW-complexes, $f:X\to Y$ and $g:Y\to X$ maps with $fg\simeq {\rm id}_Y$. Then for all $k\geq 0$ we have
\begin{eqnarray*}
(f^\ast)^{-1}(\Sigma^k(X))&\subset &\Sigma^k(Y)\\
(f^\ast)^{-1}(\Sigma^k(X,\Z))&\subset &\Sigma^k(Y,\Z).
\end{eqnarray*}
\end{proposition}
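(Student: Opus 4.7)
The plan is to transfer the Sigma property from $X$ to $Y$ by exploiting the domination: a relative class in $\tilde{Y}$ near infinity can be pushed via $\tilde{g}$ into $\tilde{X}$, killed there using the Sigma-$X$ hypothesis, and then mapped back via $\tilde{f}$, with bounded loss governed by the compactness of the homotopy $fg\simeq {\rm id}_Y$.

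First I would set the stage. By cellular approximation, assume $f,g$ are cellular. Choose lifts $\tilde{f}:\tilde{X}\to\tilde{Y}$ and $\tilde{g}:\tilde{Y}\to\tilde{X}$, equivariant with respect to $f_\#$ and $g_\#$. Since $fg\simeq {\rm id}_Y$ forces $f_\# g_\#={\rm id}_{\pi_1(Y)}$, the composite $\tilde{f}\tilde{g}$ is $\pi_1(Y)$-equivariant, and a lift of the chosen homotopy produces a $\pi_1(Y)$-equivariant homotopy $L:\tilde{Y}\times I\to\tilde{Y}$ from ${\rm id}_{\tilde{Y}}$ to $\tilde{f}\tilde{g}$. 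Given $\xi\in H^1(Y;\R)$ with $f^\ast\xi\in\Sigma^k(X)$, fix a height function $h:\tilde{Y}\to\R$ for $\xi$; then the equivariance relation shows that $h\circ\tilde{f}$ is a height function for $f^\ast\xi$ on $\tilde{X}$. Because $(y,t)\mapsto h(L(y,t))-h(y)$ is $\pi_1(Y)$-invariant, it descends to the compact space $Y\times I$, hence is bounded in absolute value by some constant $C\geq 0$.

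Next I would carry out the transfer. Writing $N_i=h^{-1}([i,\infty))\subset\tilde{Y}$ and $M_j=(h\circ\tilde{f})^{-1}([j,\infty))\subset\tilde{X}$, the inclusions $\tilde{f}(M_j)\subset N_j$ and $\tilde{g}(N_i)\subset M_{i-C}$ are immediate from the definitions and the bound on $h(\tilde{f}\tilde{g}(y))-h(y)$. Applying Proposition \ref{equivdef}(2) to $f^\ast\xi$, there is $\lambda\geq 0$ such that $\pi_l(\tilde{X},M_j)\to\pi_l(\tilde{X},M_{j-\lambda})$ is trivial for all $l\leq k$ and every $j$. Set $\lambda'=\lambda+C$. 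The composition
\[
\pi_l(\tilde{Y},N_i)\xrightarrow{\tilde{g}_\ast}\pi_l(\tilde{X},M_{i-C})\xrightarrow{0}\pi_l(\tilde{X},M_{i-C-\lambda})\xrightarrow{\tilde{f}_\ast}\pi_l(\tilde{Y},N_{i-\lambda'})
\]
equals $(\tilde{f}\tilde{g})_\ast$ and is zero by the middle arrow. On the other hand, for $y\in N_i$ we have $L(y,t)\in N_{i-C}\subset N_{i-\lambda'}$, so $L$ is a homotopy of pairs $(\tilde{Y},N_i)\to(\tilde{Y},N_{i-\lambda'})$ from the identity to $\tilde{f}\tilde{g}$. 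Hence the inclusion-induced map coincides with $(\tilde{f}\tilde{g})_\ast$ and is therefore trivial. Proposition \ref{equivdef}(2) then yields $\xi\in\Sigma^k(Y)$.

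The homological inclusion is proved by the identical argument, replacing relative homotopy groups by relative homology groups throughout and invoking Proposition \ref{equivdefhom}(2) in place of Proposition \ref{equivdef}(2).

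The main obstacle I anticipate is securing the uniform height bound $C$: this requires both the equivariance of $L$ and the compactness of $Y$, without which the shift $\lambda'=\lambda+C$ could not be taken independently of $i$. The remaining items---cellular approximation of $f$ and $g$, equivariant lifting of the homotopy, and the choice of compatible basepoints along paths to $\infty$ needed to interpret the inverse systems of relative homotopy groups---are routine.
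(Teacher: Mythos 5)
Your proof is correct and follows essentially the same approach as the paper: lift $f$, $g$ and the homotopy $fg\simeq{\rm id}_Y$ equivariantly to the universal covers, use cocompactness of $Y$ to bound the height displacement of the lifted homotopy by a constant $C$, and transfer the Sigma-$X$ hypothesis through $\tilde g$ and $\tilde f$ with a shift of $\lambda+C$. The only difference is that you verify criterion (2) of Proposition \ref{equivdef} (respectively \ref{equivdefhom}) by factoring the inclusion-induced map on relative homotopy (homology) groups through $\tilde X$, whereas the paper directly assembles the equivariant cellular homotopy required by criterion (4); the two are interchangeable by that proposition.
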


\begin{proof}
We give a proof for the homotopy invariant $\Sigma^k(Y)$. Choose liftings $\tilde{f}:\tilde{X}\to\tilde{Y}$ and $\tilde{g}:\tilde{Y}\to \tilde{X}$ with $\tilde{f}\tilde{g}\simeq {\rm id}_{\tilde{Y}}$ equivariantly. Let $\xi\in S(Y)$ satisfy $f^\ast(\xi)\in \Sigma^k(X)$. Let $h_\xi:\tilde{Y}\to \R$ be a height function. Then $h_\xi\circ \tilde{f}:\tilde{X}\to\R$ is a height function for $f^\ast\xi$.

By cocompactness, there exists a $C\geq 0$ such that
\begin{eqnarray*}
|h_\xi(\tilde{f}\tilde{g}(\tilde{y}))-h_\xi(\tilde{y})|&\leq &C
\end{eqnarray*}
for every $\tilde{y}\in \tilde{Y}$.

By assumption, Proposition \ref{equivdef} and Remark \ref{arblarge}, there is an equivariant homotopy $\tilde{H}:\tilde{X}^{(k)}\times [0,1]\to \tilde{X}$ starting with inclusion, such that
\begin{eqnarray*}
h_\xi\tilde{f}(H(\tilde{x},1))-h_\xi\tilde{f}(\tilde{x})&\geq &C+1
\end{eqnarray*}
for all $\tilde{x}\in\tilde{X}$. Therefore
\begin{eqnarray*}
h_\xi\tilde{f}(H(\tilde{g}(\tilde{y}),1))-h_\xi(\tilde{y})&\geq & 1.
\end{eqnarray*}
Now $\tilde{f}H(\tilde{g},\cdot)$ can be combined with the homotopy $\tilde{f}\tilde{g}\simeq {\rm id}_{\tilde{Y}}$ to show that $\xi\in\Sigma^k(Y)$.
\end{proof}

\begin{corollary}
Let $X$ and $Y$ be finite connected CW-complexes and $h:X\to Y$ a homotopy equivalence. Then $h^\ast(\Sigma^k(Y))= \Sigma^k(X)$ and $h^\ast(\Sigma^k(Y,\Z))= \Sigma^k(X,\Z)$ for all $k\geq 0$.
\end{corollary}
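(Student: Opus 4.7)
The plan is to deduce the corollary directly from Proposition \ref{domer} by applying it in both directions. Since $h\colon X\to Y$ is a homotopy equivalence, it admits a homotopy inverse $g\colon Y\to X$ satisfying $hg\simeq \id_Y$ and $gh\simeq\id_X$, and the induced map $h^\ast\colon S(Y)\to S(X)$ is a bijection with inverse $g^\ast$.

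First, applying Proposition \ref{domer} to the pair $(h,g)$ (using $hg\simeq\id_Y$) yields $(h^\ast)^{-1}(\Sigma^k(X))\subset \Sigma^k(Y)$. Taking images under the bijection $h^\ast$ and using that $h^\ast$ is surjective, this rewrites as $\Sigma^k(X)\subset h^\ast(\Sigma^k(Y))$.

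Next, applying Proposition \ref{domer} to the pair $(g,h)$ (using $gh\simeq\id_X$) yields $(g^\ast)^{-1}(\Sigma^k(Y))\subset \Sigma^k(X)$, i.e.\ $\Sigma^k(Y)\subset g^\ast(\Sigma^k(X))$. Applying $h^\ast$ to both sides and using $(gh)^\ast=h^\ast g^\ast=\id_{S(X)}$ gives
\[
h^\ast(\Sigma^k(Y))\subset h^\ast g^\ast(\Sigma^k(X))=\Sigma^k(X).
\]
Combining the two inclusions yields $h^\ast(\Sigma^k(Y))=\Sigma^k(X)$. The identical argument, invoking the homological half of Proposition \ref{domer}, gives $h^\ast(\Sigma^k(Y,\Z))=\Sigma^k(X,\Z)$.

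There is essentially no obstacle here beyond correctly tracking direction conventions: the main (minor) pitfall is that Proposition \ref{domer} is stated in terms of preimages under $f^\ast$, so one must carefully pass from preimage-inclusions to image-inclusions using the bijectivity of $h^\ast$ afforded by the homotopy equivalence. Once this bookkeeping is done, the corollary is immediate and requires no additional geometric input.
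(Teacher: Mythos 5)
Your proof is correct and is exactly the argument implicit in the paper: apply Proposition \ref{domer} to the pair $(h,g)$ to get $\Sigma^k(X)\subset h^\ast(\Sigma^k(Y))$, and to the pair $(g,h)$ (with the roles of $X$ and $Y$ exchanged) to get the reverse inclusion, using that $h^\ast$ is a bijection on the cohomology spheres. The bookkeeping with preimages versus images is handled correctly, and the same argument carries over verbatim to the homological invariants.
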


\begin{proposition}\label{mconnect}
Let $X$ and $Y$ be finite CW-complexes and $f:X\to Y$ $m$-connected with $m\geq 1$. Then
\begin{eqnarray*}
(f^\ast)^{-1}(\Sigma^k(X))&\subset &\Sigma^k(Y)\\
(f^\ast)^{-1}(\Sigma^k(X,\Z))&\subset &\Sigma^k(Y,\Z)
\end{eqnarray*}
for all $k\leq m$.
\end{proposition}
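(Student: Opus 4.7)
The strategy mimics the proof of Proposition \ref{domer}, with the strict homotopy retract $g$ there replaced by a ``partial inverse'' derived from the $m$-connectedness of $f$. Let $\xi\in S(Y)$ satisfy $f^\ast\xi\in\Sigma^k(X)$ with $k\le m$. Choose a height function $h_\xi:\tilde Y\to\R$ and a lift $\tilde f:\tilde X\to\tilde Y$; then $h_\xi\circ\tilde f$ is a height function for $f^\ast\xi$. When $f_\#:\pi_1(X)\to\pi_1(Y)$ fails to be injective (which can happen only if $m=1$) we replace $\tilde X$ by the intermediate cover $X'=\tilde X/\ker f_\#$ and let $\bar f:X'\to\tilde Y$ be the induced $\pi_1(Y)$-equivariant lift; this $\bar f$ is still $m$-connected, since $\pi_i(\bar f)=\pi_i(f)$ for $i\ge 2$ and $\bar f$ induces a (trivially) surjective map on $\pi_1$. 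For $m\ge 2$ one has $X'=\tilde X$ and $\bar f=\tilde f$.

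Since the pair $(M_{\bar f},X')$ consisting of the mapping cylinder of $\bar f$ and $X'$ is $m$-connected, $\pi_1(Y)$ acts freely and cellularly on both sides, and $\tilde Y^{(k)}$ has dimension $k\le m$, equivariant cellular obstruction theory provides a $\pi_1(Y)$-equivariant cellular map $\tilde g:\tilde Y^{(k)}\to X'$ together with a $\pi_1(Y)$-equivariant cellular homotopy $\tilde F:\tilde Y^{(k)}\times[0,1]\to\tilde Y$ from the inclusion to $\bar f\circ\tilde g$. Because $\tilde F$ is equivariant and $Y^{(k)}$ is compact, the $\pi_1(Y)$-invariant displacement $(\tilde y,t)\mapsto h_\xi(\tilde F(\tilde y,t))-h_\xi(\tilde y)$ descends to a continuous function on $Y^{(k)}\times[0,1]$ and is therefore bounded in absolute value by some constant $C\ge 0$.

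By Proposition \ref{equivdef}(4) and Remark \ref{arblarge}, the hypothesis $f^\ast\xi\in\Sigma^k(X)$ supplies a $\pi_1(X)$-equivariant cellular homotopy $\tilde H_X:\tilde X^{(k)}\times[0,1]\to\tilde X$ starting at the inclusion with $h_\xi\tilde f(\tilde H_X(\tilde x,1))-h_\xi\tilde f(\tilde x)\ge C+1$ for every $\tilde x\in\tilde X^{(k)}$. Being $\ker f_\#$-equivariant, $\tilde H_X$ descends to a $\pi_1(Y)$-equivariant cellular homotopy $\bar H_X$ on $X'^{(k)}$ satisfying the same bound. Concatenating $\tilde F$ (on $[0,1/2]$) with $\bar f\circ\bar H_X(\tilde g(\cdot),\cdot)$ (on $[1/2,1]$) yields an equivariant cellular homotopy $\tilde H_Y$ of $\tilde Y^{(k)}$ in $\tilde Y$ starting at the inclusion, and the net height displacement at time $1$ is at least $(C+1)+(-C)=1$. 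Proposition \ref{equivdef}(4) then gives $\xi\in\Sigma^k(Y)$.

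The homological inclusion is proved by the same recipe applied at the chain level: start from the chain self-map on $C_\ast(\tilde X)$ given by Proposition \ref{equivdefhom}(4), and replace $\tilde g,\tilde F$ by an equivariant chain-level partial inverse of $\bar f_\ast:C_\ast(X')\to C_\ast(\tilde Y)$ up to degree $m$. Such a partial inverse exists because the algebraic mapping cone of $\bar f_\ast$ is acyclic through degree $m$ (by Hurewicz applied to the simply connected $m$-connected cofiber $M_{\bar f}/X'$), so one can construct it cell-by-cell over $\Z[\pi_1(Y)]$ using standard homological obstruction theory. The main obstacle in both parts is organising the partial-inverse step so as to carry the valuation/height control through; the case $m=1$ with $f_\#$ non-injective is precisely where the intermediate cover $X'$ becomes essential.
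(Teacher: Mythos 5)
Your proof is correct, and it is a more explicit (and in one place more careful) version of the paper's argument rather than a fundamentally different one. The paper's own proof is terser: it attaches cells of dimension $\geq m+1$ to $X$ to obtain a complex $X''\supset X$ (possibly infinite) together with an extension $f'':X''\to Y$ that is a homotopy equivalence, takes a cellular homotopy inverse $g:Y\to X''$, and observes that $g(Y^{(m)})\subset X$ because only cells of dimension $\geq m+1$ were added; one then runs the argument of Proposition \ref{domer}. You avoid the ``enlarge to a homotopy equivalence'' step and instead compress $\tilde Y^{(k)}\hookrightarrow M_{\bar f}$ into the cover directly by equivariant cellular obstruction theory, which produces the same data (a cellular equivariant $\tilde g:\tilde Y^{(k)}\to \tilde X/\ker f_\#$ together with an equivariant cellular homotopy from the inclusion to $\bar f\tilde g$), just built differently. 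What your version adds is an explicit treatment of the $m=1$ case when $f_\#$ is not injective: you pass to the intermediate cover $\tilde X/\ker f_\#$ and observe everything remains $\pi_1(Y)$-equivariant. The paper's one-line reduction to Proposition \ref{domer} tacitly requires that same step, since the homotopy $H$ on $\tilde X^{(k)}$ is $\pi_1(X)$-equivariant, hence $\ker f_\#$-equivariant, and must first be descended to this intermediate cover (which is precisely the preimage of $X$ inside the universal cover of the enlarged complex $X''$) before it can be combined with $\tilde g$. So your carefulness here is a genuine clarification rather than a detour, and the concatenated homotopy you build --- $\tilde F$ followed by $\bar f\circ\bar H_X(\tilde g(\cdot),\cdot)$, with its displacement estimate $(C+1)-C\geq 1$ --- is exactly what the Proposition \ref{domer}-style argument yields. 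The homological half as you sketch it (a $\Z[\pi_1(Y)]$-chain partial inverse up to degree $m$, obtained because the algebraic mapping cone of $\bar f_\ast$ is acyclic through degree $m$) parallels the paper's corresponding reduction in the same way.
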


\begin{proof}
Add cells of dimension $\geq m+1$ to $X$ to get a (possibly infinite) CW-complex $X'$ containing $X$ such that $f$ extends to a homotopy equivalence $f':X'\to Y$. Let $g:Y\to X'$ be a cellular homotopy inverse. Then $g(Y^{(m)})\subset X$. If $f^\ast(\xi)\in\Sigma^k(X)$, the homotopy $H:\tilde{X}^{(k)}\times [0,1]\to \tilde{X}$ can now be used as in the proof of Proposition \ref{domer} to show that $\xi\in\Sigma^k(Y)$ for $k\leq m$.
\end{proof}

\begin{example}\label{mappingtor}
Let $X$ be a finite connected CW-complex and $f:X\to X$ a map. The mapping torus $M_f$ is the quotient space $M_f=X\times [0,1]/\,\sim$, where $(x,0)\sim (f(x),1)$. There is a natural map $g:M_f\to \Sp^1$ given by $g([x,t])=\exp 2\pi it$. Let
\[\xi=[g]\in [M_f,\Sp^1]=H^1(M_f;\Z)\subset H^1(M_f;\R).\]
The homotopy $h:M_f\times [0,1]\to M_f$ given by
\begin{eqnarray*}
H([x,t],s)&=&\left\{\begin{array}{rc}[x,t-s]&t\geq s\\ \,[f(x),1+t-s]&t\leq s\end{array}
\right.
\end{eqnarray*}
shows that $-\xi\in \Sigma^k(M_f)$ for all $k\geq 0$.
\end{example}

\begin{proposition}\label{firstcase}
Let $X$ be a finite connected CW-complex and $\xi\in H^1(X;\Z)$ be nonzero. Let $q:\bar{X}\to X$ be the infinite cyclic covering space corresponding to $\ker\,\xi$. Assume that $\bar{X}$ is homotopy equivalent to a CW-complex $Y$ with finite $k$-skeleton. Then $\pm \xi\in \Sigma^k(X)$.
\end{proposition}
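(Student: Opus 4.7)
The plan is to reduce to the mapping torus case of Example~\ref{mappingtor}. Let $\alpha: Y \to \overline{X}$ be the given homotopy equivalence with $Y^{(k)}$ finite, $\beta: \overline{X} \to Y$ a cellular homotopy inverse, and $t: \overline{X} \to \overline{X}$ a generator of the deck transformation group of $q$. Set $\phi = \beta \circ t \circ \alpha$ and cellularly approximate so that $\phi: Y \to Y$ is cellular; since $t$ is a homeomorphism, $\phi$ is a homotopy equivalence, and we may pick a cellular homotopy inverse $\psi$. Form the mapping tori $M_\phi$ and $M_\psi$; because $Y^{(k)}$ is finite, their $(k+1)$-skeleta are finite, so $\Sigma^k(M_\phi)$ and $\Sigma^k(M_\psi)$ are defined in the sense of Section~\ref{sigmacw}, and the projections to $S^1$ provide classes $\xi_\phi\in H^1(M_\phi;\Z)$ and $\xi_\psi\in H^1(M_\psi;\Z)$.

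The key step is to construct a weak homotopy equivalence $u: M_\phi \to X$ with $u^*\xi = \xi_\phi$. Both spaces have fundamental group $\pi_1(\overline{X}) \rtimes_{t_\#} \Z \cong \pi_1(X)$, where $\pi_1(Y)$ is identified with $\pi_1(\overline{X})$ via $\alpha$. Moreover $\widetilde{M_\phi} \simeq \widetilde{Y} \simeq \widetilde{\overline{X}} = \widetilde{X}$ as $\pi_1(X)$-spaces, since the defining relation $\alpha\circ\phi \simeq t\circ\alpha$ shows that the two $\Z$-actions are conjugate through $\alpha$. Concretely, $u$ is built by mapping the mapping telescope $\overline{M_\phi}$ equivariantly into $\overline{X}$ via $\alpha$ and descending to the quotient; $u$ is then a homotopy equivalence by Whitehead. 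Applying Example~\ref{mappingtor} directly to $M_\phi$---its explicit homotopy is cellular as soon as $\phi|_{Y^{(k-1)}}$ is, and never uses that $Y$ itself is finite---yields $-\xi_\phi \in \Sigma^k(M_\phi)$.

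Transferring along $u$ via the homotopy-invariance corollary to Proposition~\ref{domer}, mildly extended to CW-complexes with finite $k$-skeleton (the proof carries over verbatim because only the $k$-skeleton of each space intervenes), gives $-\xi\in\Sigma^k(X)$. For the opposite sign I would run the same argument on $M_\psi$: Example~\ref{mappingtor} gives $-\xi_\psi\in\Sigma^k(M_\psi)$, and the natural equivalence $M_\phi\simeq M_\psi$ obtained by reversing the $S^1$-factor (combined with $\psi\simeq\phi^{-1}$) sends $\xi_\phi$ to $-\xi_\psi$, so $-\xi_\psi$ corresponds to $+\xi_\phi$, and transferring through $u$ produces $+\xi\in\Sigma^k(X)$. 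The main technical obstacle is the rigorous construction of $u$ together with the verification that it intertwines $\xi$ and $\xi_\phi$; matching fundamental groups and universal covers is clear in principle, but producing an honest cellular map between CW-complexes (one of which has infinite skeleta above $k$) needs some care. The extension of Proposition~\ref{domer} to the finite-$k$-skeleton setting is a routine secondary step.
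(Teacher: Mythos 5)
Your reduction to the mapping torus of $\phi=\beta\,t\,\alpha$ is exactly the paper's idea, but you bypass the key simplifying step that makes the argument go through cleanly in the paper's framework, and this costs you real technical debt. In the paper, $\Sigma^k$ is defined only for \emph{finite} CW-complexes (or at least ones with finite $n$-skeleton), and $M_\phi$ need not be finite, nor even have finite skeleta above dimension $k+1$, when $Y$ is an infinite complex with merely a finite $k$-skeleton. You acknowledge this and propose to extend the definitions and the homotopy-invariance Corollary of Proposition~\ref{domer} to this setting --- which is plausible but is precisely the kind of machinery the paper is careful not to need. The paper's move is to \emph{restrict} $\phi$ to a self-map $\varphi$ of the \emph{finite} complex $Y^{(k)}$, form the finite mapping torus $M_\varphi$, observe that $M_\varphi\to M_{b\zeta a}\simeq X$ is $k$-connected (not a homotopy equivalence), and then invoke Proposition~\ref{mconnect}, which was proved for exactly this purpose. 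That one substitution --- $k$-connected map into $X$ rather than homotopy equivalence with $M_\phi$ --- removes both of the obstacles you flag: there is no need to build an honest equivalence $u:M_\phi\to X$ with infinite domain, and no need to extend Proposition~\ref{domer} beyond the finite case. Your handling of the opposite sign via $M_\psi$ and an $S^1$-reversal is also more roundabout than the paper's ``replace $\zeta$ by $\zeta^{-1}$,'' though it amounts to the same thing. So the overall strategy is right, and the two ``gaps'' you identify are bridgeable, but you would save yourself the work by restricting to $Y^{(k)}$ first and appealing to Proposition~\ref{mconnect} instead of homotopy invariance.
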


\begin{proof}
Let $h:\bar{X}\to\R$ be induced by a height function $h_\xi:\tilde{X}\to\R$ and $\zeta:\bar{X}\to\bar{X}$ the generating covering transformation with $h\zeta(x)>h(x)$ for all $x\in\bar{X}$. Let $a:Y\to \bar{X}$ and $b:\bar{X}\to Y$ be mutually inverse homotopy equivalences. The there is a homotopy equivalence $g:M_{b\zeta a}\to X$ given by
\[M_{b\zeta a}\simeq M_{\zeta ab}\simeq M_\zeta \simeq X\]
where the last homotopy equivalence is given by $[\bar{x},t]\to q(\bar{x})$ for $\bar{x}\in\bar{X}$.

We can assume that $b\zeta a:Y\to Y$ sends the $k$-skeleton to the $k$-skeleton. Let $\varphi:Y^{(k)}\to Y^{(k)}$ be the restriction of $b\zeta a$ to $Y^{(k)}$. The induced map $M_\varphi\to M_{b\zeta a}$ is $k$-connected and so there is a $k$-connected map $f:M_\varphi\to X$. By Example \ref{mappingtor} $-f^\ast(\xi)\in \Sigma^k(M_\varphi)$. It follows from Proposition \ref{mconnect} that $-\xi\in\Sigma^k(X)$. To get $\xi\in \Sigma^k(X)$ as well, replace $\zeta$ by $\zeta^{-1}$.
\end{proof}

In the next section we will show that the converse of Proposition \ref{firstcase} also holds.

\section{Domination results for covering spaces}\label{mainth}

Let $X$ be a finite connected CW-complex and $q:\overline{X}\to X$ a regular covering  space with $\pi_1(X)/\pi_1(\overline{X})$ abelian. Then we define
\begin{eqnarray*}
S(X,\overline{X})&=&\{\xi\in S(X)\,|\, q^\ast\xi=0\}.
\end{eqnarray*}
In particular, if $\overline{X}$ is the universal abelian covering of $X$, then $S(X,\overline{X})=S(X)$. More generally, $S(X,\overline{X})$ is a sphere of dimension $d-1$, where $d$ is the rank of the finitely generated abelian group $\pi_1(X)/\pi_1(\overline{X})$.

\begin{theorem}\label{dominationX}
Let $X$ be a finite connected CW-complex and $q:\overline{X}\to X$ a regular covering  space with $\pi_1(X)/\pi_1(\overline{X})$ abelian. Let $k\geq 1$. Then the following are equivalent.
\begin{enumerate}
\item $\overline{X}$ is homotopy equivalent to a CW-complex with finite $k$-skeleton.
\item $S(X,\overline{X})\subset \Sigma^k(X)$.
\end{enumerate}
Furthermore, if $S(X,\overline{X})\subset\Sigma^{\dim X}(X)$, then $\overline{X}$ is finitely dominated.
\end{theorem}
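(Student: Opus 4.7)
The plan is to identify $S(X,\overline{X})$ with the unit sphere in $\Hom(H,\R)$, where $H=\pi_1(X)/\pi_1(\overline{X})$ is finitely generated abelian of rank $d$, and to argue each implication via the equivariant homotopy characterization of $\Sigma^k(X)$ in Proposition \ref{equivdef}(4). After passing to a finite cover of $X$ to kill torsion in $H$ (which does not affect the Sigma invariants under consideration, by Propositions \ref{domer} and \ref{mconnect}), I may assume $H\cong \Z^d$ with chosen commuting deck transformations $\zeta_1,\dots,\zeta_d$ of $\overline{X}$.

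For the easier direction (2)$\Rightarrow$(1), I start from the observation that $S(X,\overline{X})$ is compact and closed under negation; hence one can select finitely many classes $\xi_1,\dots,\xi_N\in S(X,\overline X)$ with $\pm\xi_j\in\Sigma^k(X)$ whose closed positive cones cover $\Hom(H,\R)-\{0\}$. For each $\pm\xi_j$, Proposition \ref{equivdef}(4), iterated as in Remark \ref{arblarge}, supplies an equivariant cellular homotopy $\tilde H_j^{\pm}:\tilde X^{(k)}\times[0,1]\to\tilde X$ that changes the height $h_{\xi_j}$ by arbitrarily large positive (respectively negative) amounts; these descend to cellular homotopies $\overline H_j^{\pm}$ on $\overline X^{(k)}$. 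Interleaving the $\overline H_j^{\pm}$ in a suitable sequential scheme, I build a single cellular homotopy that pushes every cell of $\overline X^{(k)}$ into a fixed compact region $R\subset\overline{X}$. Because the chosen $\pm\xi_j$ cover all directions in $\Hom(H,\R)$ and $\overline X^{(k)}/H$ is compact, the inclusion $R\hookrightarrow\overline{X}$ is a $k$-equivalence; attaching cells of dimension $>k$ inside $\overline{X}$ then produces a CW-complex with finite $k$-skeleton homotopy equivalent to $\overline{X}$.

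For (1)$\Rightarrow$(2), I generalize Proposition \ref{firstcase} from $d=1$ to arbitrary $d$ via an iterated mapping torus construction. Given $\overline{X}\simeq Y$ with $Y^{(k)}$ finite, choose cellular self-maps $\psi_i:Y^{(k)}\to Y^{(k)}$ representing the restrictions of $\zeta_i$ to the $k$-skeleton (commuting up to cellular homotopy), and assemble them into an iterated mapping torus $M$. Then $M$ is a finite CW-complex of dimension $\leq k+d$, comes with a $k$-connected reference map $f:M\to X$, and carries $d$ natural cohomology classes $\eta_1,\dots,\eta_d\in H^1(M;\Z)$ dual to the mapping torus directions. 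Applying Example \ref{mappingtor} one mapping-torus direction at a time yields $-\eta_i\in\Sigma^k(M)$ for each $i$. For an arbitrary $\xi\in S(X,\overline{X})$ with $f^\ast\xi=\sum t_i\eta_i$, I combine the individual contractions along the product cell structure of $M$ (replacing $\zeta_i$ by $\zeta_i^{-1}$ where needed to flip signs) into a single cellular homotopy witnessing $-f^\ast\xi\in\Sigma^k(M)$, and then transport this back via Proposition \ref{mconnect} to obtain $\xi\in\Sigma^k(X)$.

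The main difficulty will be verifying (1)$\Rightarrow$(2) for irrational $\xi$: the rational case scales to an integer class and reduces directly to Proposition \ref{firstcase} applied to the associated infinite cyclic cover, but for a general $\xi$ the iterated mapping torus $M$ must detect arbitrary real linear combinations of the $\eta_i$'s, which requires both a careful choice of the cellular maps $\psi_i$ and a uniform contraction argument combining the $d$ mapping-torus projections. For the final finite-domination claim, apply the established equivalence with $k=\dim X$ to get $\overline{X}$ homotopy equivalent to a CW-complex $Y$ with finite $(\dim X)$-skeleton; since $\overline{X}$ itself has CW dimension $\dim X$ (its cells are lifts of cells of $X$), we have $C_i(\overline{X})=0$ for $i>\dim X$, and a standard Wall-type finiteness argument in dimension $\leq \dim X$ yields that $\overline{X}$ is finitely dominated.
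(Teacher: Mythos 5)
Your proposal takes a genuinely different route from the paper. The paper deduces Theorem \ref{dominationX} from the purely algebraic Theorem \ref{dominator} --- a statement about free $\Z G$-chain complexes being of finite $n$-type over $\Z N$ --- and returns to topology via Wall's finiteness theory; you instead sketch a direct homotopy-theoretic argument, with $(1)\Rightarrow(2)$ via iterated mapping tori (generalizing Proposition \ref{firstcase}) and $(2)\Rightarrow(1)$ via an interleaving of equivariant cellular homotopies. Both directions in your sketch have real gaps.

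For $(2)\Rightarrow(1)$: this is where the paper invests almost all of its technical effort (Lemmas \ref{pusher}, \ref{pushin}, \ref{almost}), and your phrase ``interleaving the $\overline H_j^\pm$ in a suitable sequential scheme'' elides exactly the hard part. Pushing a cell in the direction $\xi_1$ can make it recede in the direction $\xi_2$; the paper's construction applies a different $\varphi^i$ to each basis element depending on which cone $U_i$ the cell's ``position'' lies in, with gain/loss controlled by the inner-product estimate in Lemma \ref{pusher} and with boundary-compatibility corrections fed in inductively. A topological analogue would require a single cellular homotopy on $\overline{X}^{(k)}$ that uses different $\overline H_j^\pm$ on different cells yet agrees coherently on shared faces; that needs an explicit construction, not an appeal to ``a suitable scheme.'' Also, your claim that the resulting compact $R\hookrightarrow\overline{X}$ is a $k$-equivalence only comes with surjectivity on homotopy groups for free; injectivity below degree $k$ needs a separate argument (pro-triviality of $\{\pi_l(\overline{X},N)\}$ or a Hurewicz-type step as in Section \ref{hurewi}).

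For $(1)\Rightarrow(2)$: the iterated mapping torus idea is sound in spirit, and the paper itself points to an alternative proof along these lines via the techniques of \cite{schfin}. But you have correctly identified a genuine gap: for irrational $\xi$ the ``diagonal flow'' is aperiodic, and it is not clear that it can be made cellular and iterated with control, so Example \ref{mappingtor} does not directly apply. A second unaddressed issue is that $\psi_1,\dots,\psi_d$ only commute up to homotopy, so the iterated mapping torus is not automatically defined; one needs either strictly-commuting representatives (which need not exist) or a homotopy-colimit model, and then the height-function bookkeeping must be redone. The paper avoids both problems by tensoring with the $\Z Q_\xi$-free resolution $E_\ast\to\Z$ from \cite{bieren} and running a double-complex/spectral-sequence argument (Lemmas \ref{chainf}, \ref{sumzero}), which treats all $\xi\in S(G,N)$ uniformly with no coherence issue. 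Finally, a smaller point: your opening reduction ``pass to a finite cover of $X$ to kill torsion in $H$, which does not affect the Sigma invariants by Propositions \ref{domer} and \ref{mconnect}'' is unjustified --- a finite covering projection is neither a domination nor an $m$-connected map in the sense required there, and the Sigma invariants of a finite cover need not agree with those of $X$. Fortunately this reduction is unnecessary, since torsion in $H$ is invisible to $S(X,\overline{X})$ anyway.
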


We derive Theorem \ref{dominationX} from a more general version for chain complexes which is a generalization of \cite[Thm.B]{bieren}. To get an alternative proof for (1) $\Longrightarrow$ (2) one can use the techniques of \cite[Thm.3.2]{schfin}.

If $N$ is a normal subgroup of $G$ with $G/N$ abelian, we write
\begin{eqnarray*}
S(G,N)&=&\{\xi\in S(G)\,|\,N\leq\ker\,\xi\}
\end{eqnarray*}

\begin{theorem}\label{dominator}
Let $C$ be a free $\Z G$-chain complex which is finitely generated in every degree $i\leq n$, and $N$ a normal sugroup of $G$ such that $G/N$ is abelian. Then $C$ is of finite $n$-type over $\Z N$ if and only if $S(G,N)\subset \Sigma^n(C)$.
\end{theorem}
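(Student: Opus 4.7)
The theorem generalizes Bieri--Renz's Theorem~B \cite{bieren} from resolutions of $\Z$ to arbitrary finitely generated free $\Z G$-chain complexes, so my strategy is to extend their arguments to this setting. I handle the two directions separately; the reverse direction is the substantive one.

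For the forward direction, suppose $f\colon D\to C$ is a homology equivalence with $D$ a free $\Z N$-chain complex, $D_i$ finitely generated for $i\leq n$. Fix $\xi\in S(G,N)$. The inclusion $\Z N\subset \Z G_\xi$ makes $\Z G_\xi$ a free right $\Z N$-module, with basis a set of coset representatives of the submonoid $(G/N)_\xi\subset G/N$. Form $E=\Z G_\xi\otimes_{\Z N}D$, a free $\Z G_\xi$-complex finitely generated in degrees $\leq n$; by flatness, the induced map $E\to \Z G_\xi\otimes_{\Z N}C$ is a homology equivalence. To descend from $\Z G_\xi\otimes_{\Z N}C$ to $C$ itself I would employ a total-complex construction combining $E$ with the classical finite-type free resolution of $\Z$ over the finitely generated abelian monoid ring $\Z[(G/N)_\xi]$ used in \cite{bieren}. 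The resulting free $\Z G_\xi$-complex is finitely generated in degrees $\leq n$ (because the monoid-ring resolution is of type $FP_\infty$) and maps to $C$ as an $n$-equivalence, establishing $\xi\in \Sigma^n(C)$.

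For the reverse direction, assume $S(G,N)\subset \Sigma^n(C)$. By the chain-level characterization of $\Sigma^n(C)$ (the generalization of Proposition~\ref{equivdefhom}(4) to arbitrary free $\Z G$-chain complexes proved in Appendix~\ref{appendicitis}), for each $\xi\in S(G,N)$ and every $\varepsilon>0$ there is a $\Z G$-linear chain map $A_\xi\colon C\to C$, chain-homotopic to the identity via some $h_\xi$, with $v_\xi(A_\xi x)\geq v_\xi(x)+\varepsilon$ for every non-zero $x\in C_i$, $i\leq n$, where $v_\xi$ is a valuation extending $\xi$. By openness of $\Sigma^n(C)$ in $S(G)$ and compactness of $S(G,N)$, I select finitely many $\xi_1,\ldots,\xi_r\in S(G,N)$ with corresponding $A_1,\ldots,A_r$ and $h_1,\ldots,h_r$, chosen so that $\bigcap_j (G/N)_{\xi_j}$ equals the torsion subgroup $T\subset G/N$ (achievable because the $\xi_j$'s can be taken to positively span the dual of $\Hom(G/N,\R)$). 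From this finite data I build the required finitely generated free $\Z N$-subcomplex $D\subset C$ inductively on degree: starting from a $\Z G$-basis of $C_i$ in each $i\leq n$, each basis element is modified by adding boundaries through the $h_j$'s and applying compositions of the $A_j$'s, forcing its support in $G$ to lie in $\pi^{-1}(T)\cdot F$ for some finite $F\subset G$ (where $\pi\colon G\to G/N$); the pushed elements therefore span a finitely generated $\Z N$-submodule, and the inclusion induces the required $n$-equivalence.

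The main difficulty is the reverse direction, specifically converting the infinitely many pointwise chain maps $\{A_\xi\}_{\xi\in S(G,N)}$ into a single finite $\Z N$-presentation of $C$. The abelianness of $G/N$ is essential: only for abelian quotients can finitely many half-spaces in $\Hom(G/N,\R)\cong \R^d$ be chosen so that their common positive cone in $G/N$ reduces to a finite set modulo $N$, and this geometric fact about convex cones in $\R^d$ is what translates ``valuation shifts in finitely many directions'' into ``finite support modulo $N$''. The technical heart is the inductive skeletal construction of $D$, which requires careful bookkeeping of how the boundary-adjustments made in one degree interact with the chain-map condition in the next.
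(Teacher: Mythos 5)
Your strategy tracks the paper's in both directions, but the reverse direction — which you rightly identify as the substantive one — leaves the genuinely hard step unspecified, and as described it would not go through. You propose to choose finitely many $\xi_1,\dots,\xi_r\in S(G,N)$ positively spanning the dual space so that $\bigcap_j(G/N)_{\xi_j}$ is the torsion subgroup $T$, and then to ``apply compositions of the $A_j$'s'' to push the support of each basis element into $\pi^{-1}(T)\cdot F$ for a finite $F$. But $A_j$ is a $\Z G$-chain map that merely raises $v_{\xi_j}$; it gives no lower bound on $v_{\xi_i}$ for $i\neq j$, and for a basis element $tx$ whose image $\pi(t)$ already lies on the wrong side of the hyperplane $\xi_j=0$, applying $A_j$ pushes the support \emph{further} from the origin. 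Thus no fixed iterate of a composition of the $A_j$'s can uniformly shrink supports; the choice must be position-dependent, and showing that the result is nonetheless a single $\Z N$-linear chain map (with a $\Z N$-chain homotopy to the identity) is precisely where the work lies.

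The paper handles this with a norm, not a cone, argument: it covers $S(G,N)$ by finitely many open sets $U_i$ (using compactness plus openness of the valuation estimate), and for each $\Z N$-basis element $tx$ picks the $\varphi^i$ whose $U_i$ contains the direction determined by $\pi(t)$; a Pythagoras-type estimate (Lemma~\ref{pusher}) then shows the Euclidean norm $\|tx\|$ in $Q_\R$ drops by at least $r/2$, provided the original $r$ is taken large relative to the fixed bounds $M$ and $L$ on the homotopies. These choices are assembled degree by degree into a $\Z N$-chain map $\psi$ with $\Z N$-chain homotopy $K:1\simeq\psi$ (Lemma~\ref{pushin}), $\psi$ is iterated to a $\Z N$-chain map $\zeta$ landing in the finitely generated subcomplex $D_s=\{z:\|z\|\le A+nM\}$, and the pair $a:D\hookrightarrow C$, $b=\zeta:C\to D$ furnishes a domination $ab\simeq\mathrm{id}_C$ (Lemma~\ref{almost}), from which the finite-$n$-type conclusion follows via Ranicki--Wall. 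Your sketch has no analogue of $\zeta$ or of the domination; the claim that ``the inclusion induces the required $n$-equivalence'' is exactly what must be proved, and is false for the naive inclusion of the bounded part without the retraction $\zeta$. By contrast, your forward-direction sketch — tensor to $\Z G_\xi$ over $\Z N$, then form a total complex with a finite-type $\Z Q_\xi$-resolution of $\Z$ to descend back to $C$ — is the same route the paper takes (with its double complex $E_p\otimes C_q$ and the explicit maps $F^k,K^i$) and is sound in outline.
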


Theorem \ref{dominationX} follows from Theorem \ref{dominator} by the work of Wall \cite{wall,wall2}.

\begin{proof}
Assume that $C$ is of finite $n$-type over $\Z N$. Let $\xi\in S(G,N)$ and denote $Q=G/N$. Then $\xi$ induces a homomorphism, also denoted $\xi$, $\xi:Q\to\R$. As $\Z G$ is free over $\Z N$, there is a chain homotopy equivalence $f:P\to C$ over $\Z N$ with $P_j$ finitely generated free for all $j\leq n$. Then $f:\Z G_\xi\otimes_{\Z N}P\to \Z G_\xi\otimes_{\Z N} C$ shows that $\Z G_\xi\otimes_{\Z N} C$ is of finite $n$-type over $\Z G_\xi$. Also $\Z G_\xi\otimes_{\Z N} C\cong \Z Q_\xi\otimes C$ where the chain complex on the right has $\Z G_\xi$ acting diagonally. The isomorphism is given by $g\otimes c\mapsto \pi(g)\otimes gc$, where $\pi:G\to Q$ is projection.
By \cite[Lm.5.2]{bieren} there is a free resolution $E_\ast\to \Z$ over $\Z Q_\xi$ which is finitely generated in every degree. Therefore each $E_p\otimes C$ is of finite $n$-type over $\Z G_\xi$. Let $f:P_{p\,q}\to E_p\otimes C_q$ be the corresponding chain map, notice that $P_{p\,q}$ is just a positive power of $\Z G_\xi\otimes_{\Z N}P_q$ depending on the rank of $E_p$. As $C$ is free over $\Z N$, we get that $\Z Q_\xi\otimes C\cong \Z G_\xi\otimes_{\Z N}C$ is free over $\Z G_\xi$, and we can assume that $f$ is a chain homotopy equivalence with inverse $g_q:E_p\otimes C_q\to P_{p\,q}$. Denote by $L:E_p\otimes C_q\to E_p\otimes C_{q+1}$ the chain homotopy $L:fg\simeq 1$.

For $k\geq 0$ define
\[F^k:P_{p\,q}\to E_{p-k}\otimes C_{q+k} \mbox{ by }F^k=(Ld)^kf,\]
where $d:E_p\to E_{p-1}$ is the boundary of the resolution. Also define
\[K^i:P_{p\,q}\to P_{p-i\,q-1+i} \mbox{ by }K^i=gdF^{i-1}\]
for $i\geq 1$. We also set $K^0=\partial:P_{p\,q}\to P_{p-1\,q}$.

\begin{lemma}\label{chainf}
Let $\partial$ denote the boundaries $\partial:P_{p\,q}\to P_{p\,q-1}$ and $\partial:E_p\otimes C_q\to E_p\otimes C_{q-1}$. Then $\partial F^0=F^0\partial$ and for $m\geq 1$ we have
\begin{eqnarray*}
\partial F^m+(-1)^{m+1} F^m\partial&=&\sum_{k=0}^{m-1}(-1)^k F^kK^{m-k} - dF^{m-1}.
\end{eqnarray*}
\end{lemma}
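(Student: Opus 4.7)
The plan is induction on $m$. The base case $m=0$ is immediate since $F^0=f$ is a chain map by construction, giving $\partial F^0 = F^0 \partial$.

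For the inductive step, I would exploit the recursion $F^m = L d F^{m-1}$ together with the defining homotopy identity $\partial L + L\partial = fg - \id$ and the commutation $\partial d = d \partial$ on the tensor product $E_\bullet \otimes C_\bullet$, which holds because $d$ and $\partial$ act on different tensor factors. Expanding then gives $\partial F^m = (fg - \id - L\partial)d F^{m-1} = fg\, d F^{m-1} - dF^{m-1} - L d \partial F^{m-1}$. The first summand is $F^0 K^m$ by the definition $K^m = g d F^{m-1}$. For the remaining summand I would substitute the inductive hypothesis to rewrite $\partial F^{m-1}$, apply $Ld$ termwise using $L d F^k = F^{k+1}$, note that $Ld(d F^{m-2}) = 0$ since $d^2 = 0$, and shift the summation index by $j = k+1$ to convert $\sum_{k=0}^{m-2}(-1)^k F^{k+1} K^{m-1-k}$ into $-\sum_{j=1}^{m-1}(-1)^j F^j K^{m-j}$. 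Assembling the pieces, the $F^0 K^m$ contribution supplies the missing $j=0$ term of the sum, and the leftover $F^m\partial$ contribution arrives with coefficient $-(-1)^{m+1} = (-1)^m$, which becomes $(-1)^{m+1}$ after being moved to the left-hand side — precisely the claimed identity.

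The hard part will be purely notational: I have to verify that the sign $(-1)^m$ multiplying $F^{m-1}\partial$ in the inductive hypothesis, combined with the $-Ld$ prefactor and with the index shift, produces the correct parity on every summand. There is nothing conceptually subtle beyond this — once the defining relation $\partial L = fg - \id - L\partial$ for the chain homotopy and the commutativity $\partial d = d\partial$ are in place, the calculation is a finite, mechanical bookkeeping exercise using only $d^2 = 0$ and the inductive hypothesis.
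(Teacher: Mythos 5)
Your proposal is correct and is exactly the "straightforward induction" the paper alludes to without giving details: expand $\partial F^m=\partial L\,dF^{m-1}$ via $\partial L=fg-\mathrm{id}-L\partial$, commute $\partial$ with $d$, substitute the inductive hypothesis for $\partial F^{m-1}$, use $LdF^k=F^{k+1}$ and $d^2=0$, and re-index. I verified the sign bookkeeping (the $j=0$ term supplied by $F^0K^m$, the vanishing of $Ld\,dF^{m-2}$, and the final $(-1)^m F^m\partial$ moving across to give $(-1)^{m+1}$) and it all checks out.
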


\begin{proof}
The proof by induction is straightforward.
\end{proof}

\begin{lemma}\label{sumzero}
For $m\geq 0$ we have
\begin{eqnarray*}
\sum_{s=0}^m (-1)^s K^{m-s}K^s&=&0.
\end{eqnarray*}
\end{lemma}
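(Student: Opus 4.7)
My plan is to compute $K^0 K^m = \partial K^m$ in two ways: directly using $K^m = gdF^{m-1}$, and via Lemma \ref{chainf} applied to $\partial F^{m-1}$. Comparing these will yield the alternating sum. The base case $m=0$ is immediate: $K^0 K^0 = \partial^2 = 0$.

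For $m\geq 1$ the setup provides three commutation identities: $\partial g = g\partial$ because $g$ is a chain map; $d\partial = \partial d$ because each $E_p\otimes C$ is viewed as a chain complex in the $q$-direction with differential $1\otimes\partial$, so that $d\otimes 1$ is a chain map; and $d^2 = 0$ because $d$ is the boundary of the resolution $E_\ast$. Using the first two,
\[
K^0 K^m \;=\; \partial\, g d F^{m-1} \;=\; gd\,\partial F^{m-1}.
\]
Applying Lemma \ref{chainf} to $\partial F^{m-1}$ (the recursive identity when $m\geq 2$, or the auxiliary relation $\partial F^0 = F^0\partial$ when $m=1$) and composing on the left with $gd$, while using $d^2=0$ to annihilate the term $gd\cdot dF^{m-2}$, I expect an identity of the shape
\[
K^0 K^m \;=\; (-1)^{m-1} K^m K^0 \;+\; \sum_{k=0}^{m-2}(-1)^k\, K^{k+1} K^{m-1-k},
\]
with the sum understood as empty when $m=1$.

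The remainder is bookkeeping. Substituting $s = m-1-k$ in the sum pulls a common factor $(-1)^{m-1}$ outside, and multiplying the equation through by $(-1)^m$ and transposing converts it into the claim $\sum_{s=0}^{m}(-1)^s K^{m-s}K^s = 0$. No genuine obstacle is anticipated: all the substance is packaged in Lemma \ref{chainf}, the chain-homotopy $L\colon fg\simeq 1$ enters only through that lemma, and the main care required is tracking the signs through the reindexing.
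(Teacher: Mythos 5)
Your proposal is correct and takes essentially the same route as the paper: both apply Lemma \ref{chainf} to a $\partial F$-term, compose with $gd$, use $dd=0$ to kill the residual term, and reindex to get the alternating cancellation. The paper phrases this as an induction on $m$, but its inductive hypothesis is never actually invoked, so the two arguments are the same direct computation up to a shift of index (the paper expands $\partial F^m$ to handle the sum up to $m+1$; you expand $\partial F^{m-1}$ to handle the sum up to $m$).
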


\begin{proof}
For $m=0$ this means $\partial\partial=0$, so assume the statement holds for $m\geq 0$. Then
\begin{eqnarray*}
\sum_{s=0}^{m+1} (-1)^s K^{m-s}K^s&=&gdF^m\partial +(-1)^{m+1} \partial gd F^m +\sum_{s=1}^m (-1)^s K^{m-s}K^s\\
&=&(-1)^{m+1}\left(gd\sum_{k=0}^{m-1}(-1)^k F^kK^{m-k} - gddF^{m-1}\right) +\\
& & + \sum_{s=1}^m (-1)^s gdF^{m-s-1}K^s\\
&=&0
\end{eqnarray*}
by Lemma \ref{chainf} and since $dd=0$.
\end{proof}

Now define a chain complex $TP$ by $TP_k=\bigoplus\limits_{p+q=k}P_{p\,q}$ and $\delta:TP_k\to TP_{k-1}$ by
\begin{eqnarray*}
\delta&=&\sum_{s=0}^\infty (-1)^pK^s
\end{eqnarray*}
where $(-1)^p$ refers to $P_{p\,k-p}$. By Lemma \ref{sumzero} we get that $\delta\delta=0$. Also define $TE_k=\bigoplus\limits_{p+q=k}E_p\otimes C_q$ with $\delta=(-1)^p(\partial+d)$. We get a chain map $F:TP\to TE$ by setting
\begin{eqnarray*}
F&=&\sum_{k=0}^\infty (-1)^k F^k
\end{eqnarray*}
That $F$ is indeed a chain map follows from Lemmata \ref{chainf} and \ref{sumzero}.

Using the filtrations $(TP^{(m)})_k=\bigoplus\limits_{p+q=k, p\leq m}P_{p\,q}$ and \\ $(TE^{(m)})_k=\bigoplus\limits_{p+q=k, p\leq m}E_p\otimes C_q$ we see that $F$ induces a chain homotopy equivalence $\bar{F}:TP^{(m)}/TP^{(m-1)}\to TE^{(m)}/TE^{(m-1)}$ and by a spectral sequence argument $F$ is a chain homotopy equivalence. Another spectral sequence argument gives a homology isomorphism from $TE$ to $C$. As $TP_k$ is finitely generated free over $\Z G_\xi$, we now get that $C$ is of finite $n$-type over $\Z G_\xi$.

Now assume that $S(G,N)\subset \Sigma^n(C)$.

Let $X_i$ be a $\Z G$-basis of $C_i$ for $i\leq n$, a finite set by assumption. Given $c\in C_i$, we can therefore write
\begin{eqnarray*}
	c&=&\sum_{j=1}^{m_i}n^i_jx^i_j
\end{eqnarray*}
with $n^i_j\in\Z G$ and
\begin{eqnarray*}
	\partial c&=&\sum_{j=1}^{m_{i-1}}n^{i-1}_jx_j^{i-1}
\end{eqnarray*}
here the $x^i_j$ denote the elements of $X_i$. For $y\in\Z G$ we denote ${\rm supp}\,y$ as the elements of $G$ with nonzero coefficient and for $c\in C_i$ as above we let
\begin{eqnarray*}
	{\rm supp}\,c&=&\bigcup_{j=1}^{m_i}{\rm supp}\,n_j^i\cup\bigcup_{j=1}^{m_{i-1}}{\rm supp}\,n_j^{i-1}
\end{eqnarray*}
In particular, we get ${\rm supp}\,\partial c\subset {\rm supp}\,c$. The support depends on the chosen basis, but we fix the basis once and for all.

We also denote $\pi:G\to G/N=Q$. By choosing an inner product on $Q_\mathbb{R}=\mathbb{R}\otimes Q$ we get a norm $\|\cdot\|$ on $Q_\mathbb{R}$ and we can think of $S(G,N)\subset Q_\mathbb{R}$ as the unit sphere in this normed vector space. We extend the norm to $C$ by setting
\begin{eqnarray*}
	\|c\|&=&\left\{\begin{array}{cl}\max\{\|\pi(g)\|\,|\,g\in{\rm supp}\,c\}&c\not=0\\
	0&c=0\end{array}\right.
\end{eqnarray*}
Notice that we set $\|0\|=0$ despite the fact that $\|0\|=-\infty$ in \cite{bieren}. For $\xi\in S(G,N)\subset Q_\mathbb{R}$ we also obtain a valuation $v_\xi$ by setting
\begin{eqnarray*}
	v_\xi(c)&=&\left\{\begin{array}{cl}\min\{\langle\pi(g),\xi\rangle\,|\,g\in {\rm supp}\,c\}&c\not=0\\
	\infty&c=0\end{array}\right.
\end{eqnarray*}
Let us also set for $a,b\in C$
\begin{eqnarray*}
	\diam(a,b)&=&\max\{\|\pi(g)-\pi(h)\|\,|\,g\in{\rm supp}\,a,h\in{\rm supp}\,b\}
\end{eqnarray*}
with the convention that $\diam(a,b)=0$ if $a$ or $b$ is zero. Finally, for $r>0$ and $c\in C$ we let
\begin{eqnarray*}
	B_r(c)&=&\{d\in C\,|\,\diam(c,d)\leq r\}.
\end{eqnarray*}

Given $r>0$, as $S(G,N)\subset \Sigma^n(C)$ we can find for every $\xi\in S(G,N)$ a chain map $\varphi^\xi:C\to C$ and a chain homotopy $H^\xi:1\simeq \varphi^\xi$ such that $v_\xi(\varphi^\xi c)-v_\xi(c)\geq 2r$ for all $c\in C$. Furthermore, there is an open neighborhood $U_\xi$ of $\xi$ in $S(G,N)$ such that
\begin{eqnarray*}
	v_\eta(\varphi^\xi c)-v_\eta(c)&\geq &r
\end{eqnarray*}
for all $\eta\in U_\xi$ and $c\in C_l$, $l\leq n$.

As $S(G,N)$ is compact, finitely many of the $U_\xi$ suffice to cover $S(G,N)$, so let $(U_i,\varphi^i,H^i)$ for $i=1,\ldots ,k$ be triples where $U_i$ cover $S(G,N)$, $\varphi^i:C\to C$ chain map with
\begin{eqnarray*}
	v_\eta(\varphi^i c)-v_\eta(c)&\geq &r
\end{eqnarray*}
for all $\eta \in U_i$ and all $c\in C_l$, $l\leq n$, and $H^i:1\simeq \varphi^i$ a chain homotopy.

As we deal only with finitely many chain homotopies, there is a $M\geq 0$ such that
\begin{eqnarray*}
	v_\eta(c)-v_\eta(H^i c)&\leq &M
\end{eqnarray*}
for all $\eta\in U_i$ and all $c\in C_l$, $l\leq n$.
Furthermore, by replacing the chain homotopy $H^i$ by $H^i-\varphi^iH^i:1\simeq (\varphi^i)^2$ we can increase $r>0$ without increasing $M>0$. Therefore we can assume that
\begin{eqnarray}\label{bigr}
	r&>&3Mn
\end{eqnarray}

Finally, there exists an $L>0$ with
\begin{eqnarray*}
	\diam(x,x)&\leq&L\\
	\diam(x,\varphi^ix)&\leq&L\\
	\diam(x,H^ix)&\leq&L
\end{eqnarray*}
for all $i=1,\ldots,k$ and all $x\in X_l$, $l\leq n$, as there are only finitely many such conditions.

Notice that $C$ is a free $\Z N$ chain complex, and a basis is given by $TX_i=\{tx\,|\,t\in T, x\in X_i\}$, where $T\subset G$ is a subset such that $\pi|T$ induces a bijection from $T$ to $Q$.

\begin{lemma}\label{pushin}
If $S(G,N)\subset \Sigma^n(C)$, there exist constants $r>0$, $M>0$, $A>0$, a $\Z N$-chain map $\psi:C\to C$ and a $\Z N$-chain homotopy $K:1\simeq \psi$ such that for $m\leq n$ we have $\psi_m(z)=z$ if $\|z\|\leq A$, and for $tx\in TX_m$ with $\|tx\|>A$ we get
\begin{eqnarray*}
	\|\psi_m(tx)\|&\leq &\|tx\|-r.
\end{eqnarray*}
Furthermore $\|K_m(z)\|\leq \|z\|+M$ for all $z\in C_m$.
\end{lemma}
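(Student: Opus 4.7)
\emph{Proof plan.} The strategy is to patch the finitely many local chain maps $\varphi^1,\ldots,\varphi^k$ into a single $\Z N$-chain map $\psi$, exploiting the fact that a $\Z N$-linear (as opposed to $\Z G$-linear) map is free to assign different $\varphi^i$ to different cosets $tN$. For a basis element $tx\in TX_m$ we want to push $tx$ toward the origin of $Q_{\R}$, hence to apply a $\varphi^i$ whose associated direction $\eta\in U_i$ equals $-\pi(t)/\|\pi(t)\|$.

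First, I would pass from the open cover $\{U_i\}_{i=1}^k$ of $S(G,N)$ to a closed refinement $\{V_i\}$ with $V_i\subset U_i$ still covering (possible by compactness), and fix a Lebesgue number $\delta>0$ for $\{U_i\}$. For each $t\in T$ with $\|\pi(t)\|>A$, let $\iota(t)\in\{1,\ldots,k\}$ be an index with $-\pi(t)/\|\pi(t)\|\in V_{\iota(t)}$. Crucially, $A$ can be chosen large enough that for every such $t$ and every $s\in T$ appearing in the $\Z N$-expansion of $\partial(tx)$ for some $x\in X_m$, $m\leq n$, we have $\iota(s)=\iota(t)$: the displacements $\pi(s)-\pi(t)$ are drawn from a finite set depending only on the fixed cellular basis, so the angular gap between $-\pi(s)/\|\pi(s)\|$ and $-\pi(t)/\|\pi(t)\|$ tends to zero as $\|\pi(t)\|\to\infty$, eventually falling below $\delta$.

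Next, I would define $\psi$ and $K$ on the $\Z N$-basis $TX_m$ by $\psi(tx)=tx$, $K(tx)=0$ when $\|\pi(t)\|\leq A$ and $\psi(tx)=\varphi^{\iota(t)}(tx)$, $K(tx)=H^{\iota(t)}(tx)$ otherwise, extended $\Z N$-linearly. The identity $\psi-1=\partial K+K\partial$ then reduces on each basis element $tx$ (in the large-norm case) to the given identity $\varphi^{\iota(t)}-1=\partial H^{\iota(t)}+H^{\iota(t)}\partial$, the constraint $\iota(s)=\iota(t)$ being what makes the $\Z N$-linear expansion of $\partial\psi(tx)=\varphi^{\iota(t)}\partial(tx)$ agree with $\psi\partial(tx)$ at the level of $\Z N$-basis terms. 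A buffer shell near $\|\pi(t)\|=A$ requires additional care; interface discrepancies are absorbed into $K$ by slightly enlarging $A$ and using the chain homotopies $H^i$ as transition data.

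Finally, I would verify the norm bounds. For $\|\pi(t)\|>A$, each $g\in\supp\varphi^{\iota(t)}(tx)=t\cdot\supp\varphi^{\iota(t)}(x)$ satisfies $\|\pi(g)-\pi(t)\|\leq L$ (diameter bound on basis elements) and $\langle\pi(g),-\pi(t)/\|\pi(t)\|\rangle\geq -\|\pi(t)\|+r$ (valuation bound, since $-\pi(t)/\|\pi(t)\|\in V_{\iota(t)}\subset U_{\iota(t)}$). An elementary geometric estimate, using $A$ large relative to $L$, then yields $\|\pi(g)\|\leq\|\pi(t)\|-r'$ for some $r'$ close to the original $r$; this $r'$ is the constant $r$ claimed by the lemma. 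The bound $\|K(z)\|\leq\|z\|+M$ follows from the analogous diameter bound on $H^{\iota(t)}$ applied to basis elements and $\Z G$-equivariance. The main obstacle is the interface $\|\pi(t)\|\approx A$, where naive truncation fails to give a chain map; the hypothesis $r>3Mn$ provides exactly the $n$ rounds of maneuvering room needed for a recursive, dimension-by-dimension absorption of interface discrepancies into $K$.
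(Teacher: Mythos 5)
Your central technical claim is that $A$ can be chosen large enough that the index assignment $t \mapsto \iota(t)$ is compatible with the boundary, i.e.\ $\iota(s)=\iota(t)$ whenever $s$ appears in the $\Z N$-expansion of $\partial(tx)$. That claim is false as soon as the rank of $Q=G/N$ is at least $2$. The boundary links $t\sim s$ have bounded displacement in $Q_\R$, so the resulting graph on $\{t\in T : \|\pi(t)\|>A\}$ is (generically) connected, and the normalized directions $-\pi(t)/\|\pi(t)\|$ fill out a dense subset of $S(G,N)$. A function $\iota$ that is constant along all boundary links is therefore forced to be constant on each connected component, which would require a single $U_i$ to cover essentially all of $S(G,N)$ --- precisely the situation where no patching is needed. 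The angular gap between $-\pi(s)/\|\pi(s)\|$ and $-\pi(t)/\|\pi(t)\|$ does tend to zero as $\|\pi(t)\|\to\infty$, but that only guarantees both directions lie in \emph{some} common $U_i$ (by the Lebesgue number); it does not make whatever independent choices $\iota(t), \iota(s)$ you made agree, and there is no way to pin them down canonically without creating jumps somewhere.

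Because of this, your naive definition $\psi_m(tx)=\varphi^{\iota(t)}(tx)$, $K_m(tx)=H^{\iota(t)}(tx)$ does not produce a $\Z N$-chain map: $\partial\psi_m(tx)=\varphi^{\iota(t)}\partial(tx)$, while $\psi_{m-1}\partial(tx)$ mixes different $\varphi^{\iota(s)}$'s for the various $s$ in $\supp\partial(tx)$, and these differ in general. You do sense that a ``recursive, dimension-by-dimension absorption of discrepancies into $K$'' is needed, and you correctly identify $r>3Mn$ as the room that makes $n$ such rounds affordable, but you mis-locate the discrepancy: it arises throughout $\|\pi(t)\|>A$, wherever neighbouring cells receive different indices, not only at the cutoff shell $\|\pi(t)\|\approx A$. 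The paper addresses this head-on by defining $\psi_m$ and $K_m$ inductively with explicit correction terms, $K_m(tx)=H^i_m(tx)-H^i_m K_{m-1}\partial(tx)$ and $\psi_m(tx)=\varphi^i_m(tx)-\varphi^i_m(K_{m-1}\partial tx)+H^i_{m-1}\psi_{m-1}\partial tx$, together with a choice of $i$ via a ball condition $p(\pi(\supp B_{L'}(tx)))\subset U_i$ that ensures Lemma~\ref{pusher} applies to everything within bounded distance of $tx$, so that the correction terms still obey the required norm estimates. This inductive correction is the substance of the argument; without it, your construction does not satisfy $\partial K+K\partial=1-\psi$.
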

\begin{proof}
In order to define $\psi$ and $K$, we define them on $TX_m$. Let $r>0$ and $M>0$ be as above Lemma \ref{pushin}.

\begin{lemma}\label{pusher}
Let $tx\in TX_m$ with $\|tx\|\geq \max\{\frac{3}{4}r+\frac{L^2}{r}, \frac{L^2}{M}\}$. Let $i$ be such that $\xi_t/\|\xi_t\|\in U_i$, where $\xi_t(g)=\langle \pi(g),\pi(t)\rangle$ for $g\in G$. Then
\begin{eqnarray*}
	\|\varphi^i(tx)\|& \leq & \|tx\|-\frac{1}{2}r\\
	\|H^i(tx)\|&\leq &\|tx\|+\frac{3}{2}M.
\end{eqnarray*}
\end{lemma}
\begin{proof}
Let $g\in{\rm supp}\,tx$ satisfy $\|\pi(g)\|=\|tx\|$. As $\diam(tx,\varphi^i tx)\leq L$, we get for $h\in {\rm supp}\,\varphi^i(tx)$
\begin{eqnarray*}
	\|\pi(h)\|^2&\leq & (\|\pi(g)\|-r)^2+L^2\\
	&=&\|\pi(g)\|^2-2r\|\pi(g)\|+r^2+L^2\\
	&\leq& \|\pi(g)\|^2-r\|\pi(g)\|-\frac{3}{4}r^2-L^2+r^2+L^2\\
	&=&(\|\pi(g)\|-\frac{r}{2})^2
\end{eqnarray*}
So $\|\varphi^i(tx)\|\leq \|tx\|-\frac{r}{2}$. Similarly, using $\|\pi(g)\|\geq \frac{L^2}{M}$, we get $H^i(tx)\|\leq \|tx\|+\frac{3}{2}M$.
\end{proof}
If $\|tx\|>L$, then $\pi({\rm supp}\,tx)\subset Q_\mathbb{R}-\{0\}$. Denote $p:Q_\mathbb{R}-\{0\}\to S(G,N)$ the standard retraction. Using the Lebesgue number of the covering $U_1\cup\ldots\cup U_k$, we can find a constant $A'>0$ with $\|tx\|>A'$ implying
\begin{eqnarray}\label{bigproj}
	p(\pi({\rm supp}\, B_{L'}(tx)))&\subset & U_i
\end{eqnarray}
for some $i$, where $L'=\max\{\frac{3}{4}r+\frac{L^2}{r}, \frac{L^2}{M}\}+(2n+1)L$. Let $A=A'+L'$.

Define $K_0:C_0\to C_1$ by $K_0(tx)=0$ if $\|tx\|\leq A$, and $K_0(tx)=H^i_0(tx)$ if $\|tx\|>A$, where $i$ is the smallest number with (\ref{bigproj}) satisfied. Also define $\psi_0:C_0\to C_0$ by
\begin{eqnarray*}
	\psi_0(tx)&=&\left\{\begin{array}{ccc}tx&\mbox{if}&\|tx\|\leq A\\ \varphi^i(tx) & \mbox{if}&\|tx\|>A\end{array}\right.
\end{eqnarray*}
where $i$ is again the smallest number such that (\ref{bigproj}) holds. It follows that $\partial K_0=1-\psi_0$. By Lemma \ref{pusher} we get
\begin{eqnarray*}
	\|\psi_0(tx)\|&\leq&\|tx\|-\frac{r}{2}\\
	\|K_0(tx)\|&\leq &\|tx\|+\frac{3}{2}M
\end{eqnarray*}
for $\|tx\|>A$. Also
\begin{eqnarray*}
	\diam(\psi_0(tx),tx)&\leq & L\\
	\diam(K_0(tx),tx)&\leq & L
\end{eqnarray*}
for all basis elements $tx$.

Assume now by induction that we have $K_j:C_j\to C_{j+1}$, $\psi_j:C_j\to C_j$ with 
\begin{eqnarray*}
\partial K_j+K_{j-1}\partial=1-\psi_j
\end{eqnarray*}
and
\begin{eqnarray}\label{psiline}
	\|\psi_j(tx)\|&\leq&\|tx\|-\frac{r}{2}+j\frac{3}{2}M\\
	\|K_j(tx)\|&\leq &\|tx\|+\frac{3}{2}M(j+1)\label{kline}
\end{eqnarray}
for $\|tx\|>A$ and
\begin{eqnarray}\label{diama}
	\diam(\psi_j(tx),tx)&\leq & (2j+1)L\\
	\diam(K_j(tx),tx)&\leq & (2j+1)L\label{diamb}
\end{eqnarray}
for all basis elements $tx$, for all $j\leq m-1$ with $m\leq n$.

Then define $K_m:C_m\to C_{m+1}$ by $K_m(tx)=0$ if $\|tx\|\leq A$ and
\begin{eqnarray*}
	K_m(tx)&=&H^i_m(tx)-H^i_m K_{m-1}\partial(tx)
\end{eqnarray*}
for $\|tx\|>A$, where $i$ satisfies (\ref{bigproj}). It is straightforward to check that
\begin{eqnarray*}
	(\partial K_m+K_{m-1}\partial)(tx)&=&tx-\varphi^i_m(tx)+\varphi^i_m(K_{m-1}\partial tx)-H^i_{m-1}\psi_{m-1}\partial tx
\end{eqnarray*}
for $\|tx\|>A$. So define $\psi_m:C_m\to C_m$ by $\psi_m(tx)=tx$ for $\|tx\|\leq A$ and
\begin{eqnarray*}
	\psi_m(tx)&=&\varphi^i_m(tx)-\varphi_m^i(K_{m-1}\partial tx)+H^i_{m-1}\psi_{m-1}\partial tx
\end{eqnarray*}
if $\|tx\|>A$. It follows that $\partial K_m+K_{m-1}\partial=1-\psi_m$. As
\[ \|\varphi^i_m(tx)-\varphi_m^i(K_{m-1}\partial tx)+H^i_{m-1}\psi_{m-1}\partial tx\|\,\,\,\leq\hspace{2cm}\]
\[	\max\{\|\varphi^i_m(tx)\|, \|\varphi_m^i(K_{m-1}\partial tx)\|,\|H^i_{m-1}\psi_{m-1}\partial tx\|\}
\]
it is easy to see that (\ref{psiline}) also holds for $j=m$. Here we use that \\ $\diam(K_{m-1}\partial tx,tx)\leq (2m-1)L+L$, so that Lemma \ref{pusher} still applies to all basis elements occuring in $K_{m-1}\partial tx$ by the choice of $A$.

Similarly, (\ref{kline}), (\ref{diama}) and (\ref{diamb}) hold for $j=m$. For $m>n$ we set $K_m=0$ and define $\psi_m$ such that
\begin{eqnarray*}
\partial K_m+K_{m-1}\partial=1-\psi_m.
\end{eqnarray*}
As the identity is a chain map, we get that $\psi$ is a chain map. Replacing $r$ by $\frac{r}{2}-\frac{3}{2}nM$ and $M$ by $\frac{3}{2}M(n+1)$ we get Lemma \ref{pushin}. Note that the new $r>0$ by (\ref{bigr}).
\end{proof}

\begin{lemma}\label{almost}
If $S(G,N)\subset \Sigma^n(C)$, then there exists a free $\Z N$ chain complex $D$ with $D_i$ finitely generated for $i\leq n$, and $\Z N$-chain maps $a:D\to C$, $b:C\to D$ with $ab$ chain homotopic to the identity on $C$. Also, $a:D_i\to C_i$ can be assumed to be inclusion for $i\leq n$, and $D_i=C_i$ for $i>n$.
\end{lemma}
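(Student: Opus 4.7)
The plan is to use the $\Z N$-chain map $\psi:C\to C$ and the chain homotopy $K:1\simeq \psi$ provided by Lemma \ref{pushin} to iteratively push basis elements into a finitely generated subcomplex, and then construct $D$, the maps $a$, $b$, and the chain homotopy via a telescoping argument.

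First I would define $D_i$ for $i\leq n$ as the $\Z N$-submodule of $C_i$ spanned by the $\Z N$-basis elements $tx\in TX_i$ of norm at most some suitable constant $A^\ast\geq A$, chosen (possibly depending on $i$) so that $D_\ast$ is closed under $\partial$ in degrees $\leq n$. Since $Q=G/N$ is finitely generated abelian, hence has only finite torsion, the $A^\ast$-ball in $Q_{\mathbb{R}}$ contains only finitely many elements of $\pi(T)$, so each such $D_i$ is finitely generated free over $\Z N$. For $i>n$ set $D_i=C_i$, with boundary $\partial^D=\partial$ in degrees $i\geq n+2$ and $\partial^D_{n+1}:=b\circ \partial$, where $b$ is as below.

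Next I would define $b_i:C_i\to D_i$ for $i\leq n$ on basis elements by $b(z)=\psi^{j(z)}(z)$, where $j(z)$ is any integer large enough that $\psi^{j(z)}(z)$ lies in $D_i$. Because $\psi$ fixes elements of norm $\leq A$ and reduces norms by $r$ per iteration above this threshold, the iteration stabilizes and $b(z)$ is well-defined independently of the chosen $j\geq j(z)$. For $i>n$ set $b_i=\id$. The chain map property $\partial b=b\partial$ follows from $\psi$ being a chain map combined with the consistency of the stable value under $\Z N$-linearity. For the chain homotopy $H:\id_C\simeq ab$ I use the telescoping formula
\begin{equation*}
H(z)\;=\;\sum_{k=0}^{\infty}\bigl(\psi^k K(z)-b(K(z))\bigr)
\end{equation*}
on basis elements $z$ in degrees $i\leq n$, extended $\Z N$-linearly, with $H=0$ in degrees $\geq n+1$. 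Each term $\psi^k K(z)-b(K(z))$ vanishes once $k\geq j(K(z))$, so the sum is in fact finite. A direct computation using $\partial K+K\partial=1-\psi$, $\partial b=b\partial$, and $b\psi=b$ then yields $\partial H+H\partial=\id-b$ in degrees $\leq n$. Finally, the chain map $a$ is defined as inclusion for $i\leq n$, as $c\mapsto c-H(\partial c)$ on $D_{n+1}=C_{n+1}$ (which is precisely the correction needed for $a$ to be a chain map in degree $n+1$), and as $\id$ for $i\geq n+2$; one then checks that this extends $H$ to a chain homotopy $ab\simeq \id_C$ globally.

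The main technical obstacle is the careful bookkeeping of how norms behave under $\partial$, $\psi$, and $K$: one must choose $A^\ast$ (and possibly let it depend on degree) so that $D$ is genuinely closed under $\partial$, so that $b(z)\in D$ for every basis element $z$, and so that the telescoping sum defining $H$ is interpretable term by term. The homotopy calculation itself is clean and, somewhat surprisingly, does \emph{not} require the iteration counts $j(z)$ to be monotone along $\partial z$ or $K(z)$, since the sum for $H(z)$ converges termwise: each $\psi^k K(z)-b(K(z))$ vanishes beyond its own stabilization index $j(K(z))$.
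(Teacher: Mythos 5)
Your overall strategy is the same as the paper's: use the $\psi$, $K$ from Lemma \ref{pushin} to push things into a finitely generated $\Z N$-subcomplex, and produce the domination by a telescoping chain homotopy. Your definitions of $D$, $b=\psi^\infty$, and $a$ (inclusion in low degrees, corrected in degree $n+1$) are sound, and the observation that $\psi^\infty$ exists, is $\Z N$-linear, and is a chain map with $b\psi=b$ is correct. Your formula is also cleaner than the paper's inductive one, $\Phi_s(tx)=\sum_{j=0}^l\left(K_s\psi^j tx-K_s\psi^j\Phi_{s-1}\partial tx\right)$.

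However, there is a genuine gap in the top degree. For $z\in C_n$ you have $K(z)\in C_{n+1}$, and your term $\psi^k K(z)-b(K(z))$ involves $\psi^k_{n+1}$. But Lemma \ref{pushin} controls $\psi_m$ only for $m\leq n$; in the proof of that lemma one simply sets $K_m=0$ for $m>n$ and \emph{defines} $\psi_{n+1}=1-K_n\partial$, which enjoys no norm-decreasing property. Consequently $\psi^k_{n+1}K_n(z)$ does not stabilize (one computes $\psi^k_{n+1}=1-K_n\big(\sum_{j=0}^{k-1}\psi_n^j\big)\partial$, and since $\psi_n^j\partial w$ stabilizes at a generally nonzero value $b_n\partial w$, the sum grows linearly in $k$), and since you set $b_{n+1}=\id$, the ``stabilization index $j(K(z))$'' you invoke does not exist when $z$ has degree $n$. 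So $H_n$ is not a finite sum and is not well-defined; setting $H_n=0$ instead does not give $\partial H_n+H_{n-1}\partial=1-b$ either. The fix is small but essential: move $K$ to the outside and define $H(z)=\sum_{k\geq 0}K\big(\psi^k(z)-b(z)\big)$. For $z\in C_s$ with $s\leq n$ this only uses $\psi_s$ and $K_s$, $\psi^k(z)-b(z)$ vanishes for $k\geq j(z)$, and the same telescoping computation using $\partial K+K\partial=1-\psi$ and $\psi b=b$ gives $\partial H+H\partial=1-b$ in degrees $\leq n$, after which your correction of $a$ in degree $n+1$ and the modified boundary $\partial^D_{n+1}=b\circ\partial$ go through as you describe. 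This is in effect what the paper's recursion with the $\Phi_{s-1}\partial$ term accomplishes; once repaired your version is a genuine simplification.
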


\begin{proof}
Let $r, M,A>0$, $\psi$ and $K$ as provided by Lemma \ref{pushin}. If $l$ is a positive integer, $B\geq 0$ and $z\in C_m$ with $m\leq n$, we have
\begin{equation}
	\|\psi^l(z)\|\leq A+B \hspace{0.8cm}\mbox{if}\hspace{0.8cm}\|z\|\leq A+l\cdot r+B
	\label{reducer}
\end{equation}
We define a $\Z N$-chain homotopy $\Phi:C\to C_{+1}$ as follows. For $s\leq n$, set
\begin{eqnarray*}
	\Phi_s(tx)&=&\sum_{j=0}^l K_s\psi^jtx-K_s\psi^j\Phi_{s-1}\partial tx
\end{eqnarray*}
where $l$ is an integer such that $\|tx\|\in (A+l\cdot r,A+(l+1)r]$, and for $s>n$ we simply set $\Phi_s=0$. We get a $\Z N$-chain map $\zeta:C\to C$ by setting $\zeta=1-\partial\Phi-\Phi\partial$, in particular, $\zeta$ is chain homotopy equivalent to the identity.

Using induction, we see that
\begin{eqnarray*}
	\|\Phi_s(tx)\|&\leq&\|tx\|+(s+1)M
\end{eqnarray*}
for $s\leq n-1$. It follows that
\begin{eqnarray*}
	\|\Phi_s(z)\|&\leq&\|z\|+(s+1)M
\end{eqnarray*}
for all $z\in C_s$.

We claim that for $s\leq n$ we get
\begin{eqnarray}
	\label{starstar}
	\|\zeta(z)\|&\leq&A+s\cdot M
\end{eqnarray}
for all $z\in C_s$.

This holds for $s=0$ by (\ref{reducer}), as $\zeta(tx)=\psi^{l+1}(tx)$ if $\|tx\|\in(A+l\cdot r,A+(l+1)r]$.

Now notice that
\begin{eqnarray*}
	(\partial\Phi_s+\Phi_{s-1}\partial)(tx)&=&\sum_{j=0}^l (K_s\psi^jtx-K_s\psi^j\Phi_{s-1}\partial tx)+\Phi_{s-1}\partial(tx)\\
	&=&\sum_{j=0}^l((\partial K_s\psi^j+K_{s-1}\partial \psi^j)(tx)\\
	& &-\sum_{j=0}^l(\partial K_s\psi^j\Phi_{s-1}+K_{s-1}\partial\psi^j\Phi_{s-1})(\partial tx)\\
	& &+\sum_{j=0}^l (K_{s-1}\psi^j\partial\Phi_{s-1}\partial tx - K_{s-1}\partial\psi^jtx)+\Phi_{s-1}\partial tx\\
	&=&tx-\psi^{l+1}tx-\Phi_{s-1}\partial tx+\psi^{l+1}\Phi_{s-1}\partial tx\\
	& &-\sum_{j=0}^l K_{s-1}\partial\psi^jtx+\Phi_{s-1}\partial tx\\
	& &+\sum_{j=0}^l (K_{s-1}\psi^j\partial tx- K_{s-1}\psi^j\zeta\partial tx)\\
	&=&tx-\psi^{l+1}tx+\psi^{l+1}\Phi_{s-1}\partial tx -\sum_{j=0}^lK_{s-1}\psi^j\zeta\partial tx
\end{eqnarray*}
It follows that
\begin{eqnarray*}
	\zeta_s(tx)&=&\psi^{l+1}tx-\psi^{l+1}\Phi_{s-1}\partial tx+\sum_{j=0}^lK_{s-1}\psi^j\zeta_{s-1}\partial tx
\end{eqnarray*}
Using induction, we see again by (\ref{reducer}), that (\ref{starstar}) holds.

Define a chain complex $D$ by
\begin{eqnarray*}
	D_s&=&\{z\in C_s\,|\,\|z\|\leq A+nM\}
\end{eqnarray*}
for $s\leq n$. For $s>n$ we let $D_s=C_s$. The boundary map $\partial_D:D_{n+1}\to D_{n}$ is given by $\partial_D=\zeta\circ \partial_C$. We can define a chain map $b:C\to D$ by using $\zeta$ for $s\leq n$ and the identity for $s>n$ and a chain map $a:D\to C$ by using inclusion for $s\leq n$ and $\zeta$ for $s>n$. Then $ab=\zeta:C\to C$ is chain homotopic to the identity. As only finitely many $q\in Q$ satisfy $\|q\|\leq A+nM$, we get that $D_s$ is finitely generated free over $\Z N$, compare \cite[5.4]{bieren}. This finishes the proof of Lemma \ref{almost}.
\end{proof}
To finish the proof of Theorem \ref{dominator}, note that by Lemma \ref{almost} the chain complex $C$ is dominated over $\Z N$ by a chain complex $D$ with $D_i$ finitely generated free for $i\leq n$. By a standard construction, compare Ranicki \cite[\S 3]{ranims} or Wall \cite{wall2}, there exists a $\Z N$ chain complex $E$ chain homotopy equivalent to $C$ with $E_i$ finitely generated free for $i\leq n$.
\end{proof}

\begin{corollary}
Let $C$ be a finitely generated free $\Z G$-chain complex with $C_i=0$ for $i>n$. Then $C$ is $\Z N$-chain homotopy equivalent to a finitely generated projective $\Z N$-chain complex $D$ with $D_i=0$ for $i>n$ if and only if $S(G,N)\subset \Sigma^n(C)$.
\end{corollary}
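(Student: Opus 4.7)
The plan is to derive this corollary from Theorem~\ref{dominator} by tracking the length bound $C_i=0$ for $i>n$ through the constructions in its proof.

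For the direction $(\Leftarrow)$, if $C$ is $\Z N$-chain homotopy equivalent to a finitely generated projective $\Z N$-chain complex $D$, then taking $C'=D$ with the equivalence as the chain map shows that $C$ is trivially of finite $n$-type over $\Z N$. Theorem~\ref{dominator} then gives $S(G,N)\subset\Sigma^n(C)$; note that the hypothesis $D_i=0$ for $i>n$ is not actually used for this direction.

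For the direction $(\Rightarrow)$, I would not cite Theorem~\ref{dominator} as a black box but rather invoke Lemma~\ref{almost} directly, since its conclusion preserves length. Lemma~\ref{almost} yields a free $\Z N$-chain complex $D'$ with $D'_i$ finitely generated free for $i\leq n$ and $D'_i=C_i$ for $i>n$, together with $\Z N$-chain maps $a:D'\to C$ and $b:C\to D'$ with $ab$ chain homotopic to the identity on $C$. The hypothesis $C_i=0$ for $i>n$ now forces $D'_i=0$ for $i>n$, so $D'$ is a finitely generated free $\Z N$-chain complex concentrated in degrees $\leq n$ which dominates $C$ over $\Z N$.

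To upgrade this domination to a chain homotopy equivalence with a finitely generated projective complex of the same length, I would run the standard Wall--Ranicki construction on the homotopy-idempotent $p=ba:D'\to D'$: the mapping telescope (or algebraic Eilenberg swindle) associated to $p$ yields a finitely generated projective $\Z N$-chain complex $D$ chain homotopy equivalent to $C$ over $\Z N$, and since $D'$ has length $\leq n$ so does $D$. The main obstacle is precisely verifying that this construction respects the length bound, since the construction invoked at the end of the proof of Theorem~\ref{dominator} produces a finitely generated \emph{free} complex, and the conversion from projective to free at the top degree typically raises the length by one. For the corollary one must stop before this step, keeping $D_n$ projective; the possible failure of $D_n$ to be free is the Wall finiteness obstruction $[C]\in\widetilde{K}_0(\Z N)$, which is exactly why the corollary is phrased in terms of projective rather than free complexes.
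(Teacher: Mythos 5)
Your proposal is correct and follows essentially the same route as the paper: the paper also handles $(\Leftarrow)$ by noting that a finitely generated projective equivalent trivially gives finite $n$-type and invoking Theorem~\ref{dominator}, and handles $(\Rightarrow)$ by applying Lemma~\ref{almost} directly to get a length-$\leq n$ free dominating complex $D'$ and then citing Ranicki's Prop.~3.1 for precisely the projective-upgrade step you describe. Your added remarks — that the length bound on $D$ is unused in $(\Leftarrow)$, and that stopping at projective in top degree is what preserves length, with the free/projective discrepancy measured by the Wall obstruction in $\widetilde{K}_0(\Z N)$ — are accurate commentary on the paper's one-line citation.
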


\begin{proof}
If $C$ is $\Z N$-chain homotopy equivalent to a finitely generated projective $\Z N$-complex, it is, by definition, of finite $n$-type over $\Z N$, hence $S(G,N)\subset \Sigma^n(C)$ by Theorem \ref{dominator}.

If $S(G,N)\subset \Sigma^n(C)$, then by Lemma \ref{almost} there is a chain complex $D'$, finitely generated free over $\Z N$ with $D_i'=0$ for $i>n$ which dominates $C$. By \cite[Prop.3.1]{ranims}, the required chain complex $D$ exists.
\end{proof}

\section{Movability of homology classes}\label{movhomnotion}
Recall from Proposition \ref{sigmanov} that the vanishing and non-vanishing of Novikov-Sikorav homology groups in the universal cover case determine $\Sigma^k(X,\Z)$. We now want to take a closer look at the homology of other coverings, in particular coverings with abelian covering transformation group.

In this section, $R$ is a ring, although we have mainly the cases $R=\Z$ and $R=\kk$ a field in mind.

Let $p:\overline{X}\to X$ be a regular cover, where $X$ is again a finite connected CW-complex. Denote $G=\pi_1(X)/\pi_1(\overline{X})$. Recall that $S(X,\overline{X})$ consists of those $\xi\in S(X)$ with $p^\ast\xi=0$. For such $\xi$ we can define a height function $h:\overline{X}\to\R$ and neighborhoods $N\subset\overline{X}$ of infinity with respect to $\xi$ as in the case of the universal covering.

\begin{definition}
 A homology class $z\in H_q(\overline{X};R)$ is said to be \em movable to infinity in $\overline{X}$ \em with respect to $\xi\in S(X,\overline{X})$, if $z$ can be realized by a singular cycle in any neighborhood $N$ of infinity with respect to $\xi$.
\end{definition}

In other words, $z\in H_q(\overline{X};R)$ is movable to infinity with respect to $\xi$, if $z$ is an element of
\[ \bigcap_N {\rm Im}(H_q(N;R)\to H_q(\overline{X};R)) \]
where the intersection is taken over all neighborhoods of infinity with respect to $\xi$.

Note that we have an inverse system of $R$-modules given by $\{H_q(\overline{X},N;R)\leftarrow H_q(\overline{X},N';R)\}$ which runs over neighborhoods $N'\subset N$ of infinity with respect to $\xi$. So $z$ is movable to infinity with respect to $\xi$ if and only if
\[ z \in \ker(H_q(\overline{X};R)\to \liminv H_q(\overline{X},N;R)). \]
Just as with integer coefficients, we can define a Novikov-Sikorav ring $\widehat{RG}_\xi$ with coefficients in an arbitrary ring $R$. As this ring can be expressed as an inverse limit, standard methods give the following exact sequence, see \cite{farscm} or \cite{geoghe} for details.
\begin{equation}\label{homlimseq}
 0\rightarrow \limone H_{q+1}(\overline{X},N;R)\rightarrow H_q(X;\widehat{RG}_{-\xi})\rightarrow \liminv H_q(\overline{X},N;R)\rightarrow 0
\end{equation}
Let us give a simple criterion for $z\in H_i(\overline{X};R)$ to be movable to infinity.

\begin{definition}
 An element $\Delta\in RG$ is said to have \em $\xi$-lowest coefficient 1\em, if $\Delta=1-y$, with $y=\sum a_jg_j\in RG$ and such that each $g_j\in G$ satisfies $\xi(g_j)>0$.
\end{definition}
Such a $\Delta$ is invertible over $\widehat{RG}_{-\xi}$. So if $\Delta\cdot z=0 \in H_q(\overline{X};R)$, we get that the image of $z$ in $H_q(X;\widehat{RG}_{-\xi})$ is zero, and by (\ref{homlimseq}), $z$ is movable to infinity.

Under certain conditions, this is in fact necessary for movability to infinity. The following theorem is taken from \cite{farscm,farsce}.

\begin{theorem}\label{movcrit}
 Let $X$ be a finite connected CW-complex and $p:\overline{X}\to X$ a regular covering with covering transformation group $G\cong \Z^r$. Let $\xi\in S(X,\overline{X})$ induce an injective homomorphism $G\to\R$. Let $R$ be either $\Z$ or a field. For $z\in H_q(\overline{X};R)$, the following are equivalent:
\begin{enumerate}
 \item $z$ is movable to infinity with respect to $\xi$.
 \item $i_\ast(z)=0\in H_q(X;\widehat{RG}_{-\xi})$, where $i_\ast:H_q(\overline{X};R)\cong H_q(X;RG)\to H_q(X;\widehat{RG}_{-\xi})$ is change of coefficients.
 \item There is $\Delta\in RG$ with $\xi$-lowest coefficient 1 such that $\Delta\cdot z=0$.
\end{enumerate}
\end{theorem}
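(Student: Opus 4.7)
Since the implications $(3)\Rightarrow(2)\Rightarrow(1)$ were already sketched in the paragraph preceding the theorem, my plan is to handle the converse directions $(1)\Rightarrow(2)$ and $(2)\Rightarrow(3)$.

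For $(1)\Rightarrow(2)$, I plan to tensor the short exact sequence of $RG$-modules
\[
0 \to RG \to \widehat{RG}_{-\xi} \to \widehat{RG}_{-\xi}/RG \to 0
\]
with the cellular chain complex $C_*(\overline{X};R) = C_*(X;RG)$. The resulting long exact sequence yields a connecting map $\partial\colon H_{q+1}(X; \widehat{RG}_{-\xi}/RG) \to H_q(\overline{X};R)$ whose image equals $\ker i_*$. A class on the source is represented by a formal chain in $\overline{X}$ with $\xi$-support receding to $+\infty$, modulo compactly supported chains, and $\partial$ reads off its ordinary boundary. Given (1), I would choose heights $n_k \to +\infty$ and cycles $z_k \in C_q(N_{n_k};R)$ homologous to $z$ via $\partial c_k = z - z_k$, and use the Noetherian property of $RG$ on the finitely generated $RG$-submodule of $q$-boundaries inside $C_q(X;RG)$ to successively refine the $c_k$ so that the differences $c_{k+1}-c_k$ have support above height $n_k - L$ for a fixed $L$. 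The formal telescope $\sum_k(c_{k+1}-c_k)$ then represents a class in $H_{q+1}(X;\widehat{RG}_{-\xi}/RG)$ with boundary $z$. The main hurdle here is the Noetherian height-adjustment of the bounding chains.

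For $(2)\Rightarrow(3)$, since $G\cong\Z^r$ and $R$ is Noetherian, $RG\cong R[t_1^{\pm 1},\ldots,t_r^{\pm 1}]$ is Noetherian; hence $M:=H_q(\overline{X};R)$ is a finitely generated $RG$-module. The central technical ingredient is the flatness of $\widehat{RG}_{-\xi}$ over $RG$, a consequence of the injectivity of $\xi$ on $G$ and established in \cite{farscm,farsce}. Flatness identifies $H_q(X;\widehat{RG}_{-\xi})\cong\widehat{RG}_{-\xi}\otimes_{RG}M$ and, restricted to the cyclic submodule $\langle z\rangle\cong RG/I$ with $I := \mathrm{Ann}_{RG}(z)$, reduces the vanishing of $i_*(z)$ to the condition $1\in I\widehat{RG}_{-\xi}$.

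To extract $\Delta$, I write $1 = \sum_{j=1}^k \hat{a}_j f_j$ with $\hat{a}_j\in\widehat{RG}_{-\xi}$ and $f_j\in I$. After pre-multiplying each $f_j$ by a group element $g_j^{-1}$ (which keeps it in $I$, with the compensating $g_j$ absorbed into $\hat{a}_j$), I may assume each $f_j$ has its $\xi$-minimum attained at the identity $e\in G$. Next, split $\hat{a}_j = a_j + \hat{b}_j$ where $a_j$ collects the terms of $\hat{a}_j$ supported on $\{g : \xi(g)\leq 0\}$ (a finite sum, by the very definition of $\widehat{RG}_{-\xi}$, hence lying in $RG$) and $\hat{b}_j$ collects the rest. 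Set $\Delta := \sum_j a_j f_j\in I$. Then $1-\Delta = \sum_j \hat{b}_j f_j$ is supported strictly in $\{g:\xi(g)>0\}$, since $\hat{b}_j$ is supported on $\xi>0$ and $f_j$ on $\xi\geq 0$. Because $\xi$ is injective, $e$ is the unique group element with $\xi$-value zero, so $\Delta = 1 - y$ with $y\in RG$ supported on $\{g:\xi(g)>0\}$, which is precisely $\xi$-lowest coefficient $1$; and $\Delta z = 0$ since $\Delta\in I$. The principal obstacle throughout the proof is the flatness of $\widehat{RG}_{-\xi}$ over $RG$; once that is in place, the extraction of $\Delta$ is a clean truncation, and $(1)\Rightarrow(2)$ is a Noetherian telescope.
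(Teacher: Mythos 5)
Your direction $(2)\Rightarrow(3)$ is correct and cleanly done. Passing to the cyclic submodule $RG/I\hookrightarrow M$ with $I=\mathrm{Ann}_{RG}(z)$, using flatness of $\widehat{RG}_{-\xi}$ over $RG$ to reduce to $1\in I\widehat{RG}_{-\xi}$, and then truncating $\sum\hat a_j f_j$ into a low part $\sum a_j f_j\in I$ and a high part $\sum\hat b_j f_j$ is exactly the standard extraction argument; the observation that $1-\Delta=\sum\hat b_jf_j$ lies in $RG$ and is supported where $\xi>0$ is right, and your normalization of the $f_j$ is legitimate because $RG$ is commutative. Flatness is the nontrivial input and you appropriately flag it as cited from \cite{farscm,farsce}.

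The gap is in $(1)\Rightarrow(2)$. Your reduction — tensor $C_*(X;RG)$ with $0\to RG\to\widehat{RG}_{-\xi}\to\widehat{RG}_{-\xi}/RG\to 0$, identify $\ker i_*$ with $\mathrm{Im}\,\partial$, and build an element of $H_{q+1}(X;\widehat{RG}_{-\xi}/RG)$ mapping to $z$ by a telescope — is a correct framework, and it is essentially a reformulation of the $\liminv$/$\limone$ sequence (\ref{homlimseq}) that the paper uses. But the crucial step, arranging the bounding chains $c_k$ so that $c_{k+1}-c_k$ is supported at height $\geq n_k-L$ for a fixed $L$, is exactly the heart of the theorem, and the justification you give (``the Noetherian property of $RG$ on the finitely generated $RG$-submodule of $q$-boundaries'') does not establish it. The module $B_q=\partial C_{q+1}$ is indeed a finitely generated $RG$-module, but the height filtration $\{b\,:\,\mathrm{supp}\,b\subset N_n\}$ is a filtration by $RG_\xi$-submodules (or $R[G_{\geq 0}]$-submodules), not by $RG$-submodules, and $RG_\xi$ is not Noetherian when $G\cong\Z^r$ with $r\geq 2$ and $\xi$ is injective. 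So no Artin--Rees lemma for $RG$ applies directly; this is precisely where the hypotheses ``$G\cong\Z^r$'' and ``$\xi$ injective'' must be used, and where the cited references invoke either the flatness of $\widehat{RG}_{-\xi}$ over $RG$ (Sikorav, Farber--Sch\"utz) or Usher's finiteness theorem to obtain the vanishing of $\limone H_{q+1}(\overline X,N;R)$. Your sentence ``the main hurdle here is the Noetherian height-adjustment'' correctly locates the difficulty, but the proposed resolution does not work as stated. Note also that the present paper does not prove Theorem \ref{movcrit} at all: it explicitly takes it from \cite{farscm,farsce}, sketches $(3)\Rightarrow(2)\Rightarrow(1)$ in the preceding paragraph, and reduces $(1)\Leftrightarrow(2)$ to the $\limone$-vanishing in (\ref{homlimseq}), citing \cite{farscm,farsce} and Usher for that vanishing; so the step you are trying to supply is precisely the one the paper delegates elsewhere.
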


In the case that $R$ is a field, condition (3) is equivalent to the existence of a non-zero $\Delta\in RG$ with $\Delta\cdot z=0\in H_q(\overline{X};R)$, as we can find $r\in R$ and $g\in G$ such that $\Delta\cdot rg$ has $\xi$-lowest coefficient 1.

Notice that $\Delta\in RG$ having $\xi$-lowest coefficient 1 is an open condition in $\xi\in S(X,\overline{X})$; in particular, if $z$ is movable to infinity with respect to $\xi$, it is also movable to infinity with respect to nearby $\xi'$, under the conditions of Theorem \ref{movcrit}.

The equivalence of (1) and (2) is obtained by showing that the $\limone$-term in (\ref{homlimseq}) vanishes. This is done in \cite{farscm,farsce} under the conditions of Theorem \ref{movcrit}. Using Usher \cite[Thm.1.3]{usher} one can see that the $\limone$ term in (\ref{homlimseq}) vanishes in fact for any abelian covering and any Noetherian ring $R$.

Theorem \ref{movcrit} is an important ingredient in obtaining lower bounds for $\cat(X,\xi)$ and $\cat^1(X,\xi)$ via cup-lengths. We refer the reader to \cite{farsch,farsce} for details.

\section{Function spaces of paths to infinity}\label{latour}

If $\gamma:[0,\infty)\to X$ is a map, we can lift it to a map $\tilde{\gamma}:[0,\infty)\to \tilde{X}$ into the universal cover. We want to look at those maps, which lift to paths to $\infty$ with respect to a given $\xi$, compare Definition \ref{pathtoinfinity}. This does not depend on the particular lift of $\gamma$.

We set
\[
\mathcal{C}_\xi(X)\,=\,\{\gamma:[0,\infty)\to X\,|\,\tilde{\gamma} \mbox{ is a path to infinity with respect to }\xi\}.
\]
It is equipped with the following topology: For $a,b\in[0,\infty)$ and $U$ open in $X$ let
\begin{eqnarray*}
W(a,b;U)&=&\{\gamma\in\mathcal{C}_\xi(X)\,|\,\gamma([a,b])\subset U\}
\end{eqnarray*}
and for $a,A\in[0,\infty)$ let
\begin{eqnarray*}
W(a,A)&=&\{\gamma\in\mathcal{C}_\xi(X)\,|\,\forall t\geq a\,\,\,h_\xi\tilde{\gamma}(t)-h_\xi\tilde{\gamma}(0)>A\}.
\end{eqnarray*}
These sets form a subbasis for the topology of $\mathcal{C}_\xi(X)$. Notice that the sets $W(a,b;U)$ provide the compact-open topology on $\mathcal{C}_\xi(X)$ while the sets $W(a,A)$ give a "control at infinity".

The evaluation $e:\mathcal{C}_\xi(X)\to X$ given by $e(\gamma)=\gamma(0)$ is a fibration and for $x_0\in X$ we have the fiber
\begin{eqnarray*}
\mathcal{M}_\xi&=&\{\gamma\in\mathcal{C}_\xi(X)\,|\,\gamma(0)=x_0\},
\end{eqnarray*}
compare \cite{latour}.

\begin{remark}
 If we consider $\mathcal{M}_\xi$ with the compact-open topology, we get that $\mathcal{M}_\xi$ is contractible. To see this, choose a $\gamma_\infty\in \mathcal{M}_\xi$. For any $\gamma\in \mathcal{M}_\xi$ and $t\in [0,\infty)$, let $\gamma_t\in \mathcal{M}_\xi$ be given by
\begin{eqnarray*}
 \gamma_t(s)&=&\left\{\begin{array}{ll}\gamma_\infty(s)& 0\leq s\leq t \\ \gamma_\infty(2t-s)&t\leq s\leq 2t \\ \gamma(s-2t) & 2t\leq s \end{array} \right.
\end{eqnarray*}
It is easy to see that $H:\mathcal{M}_\xi\times [0,\infty]\to \mathcal{M}_\xi$ given by $H(\gamma,t)=\gamma_t$ is continuous in the compact-open topology, and hence defines a contraction. Of course $H$ is no longer continuous with the $W(a,A)$ open in $\mathcal{M}_\xi$.
\end{remark}

\begin{remark}
 Let us give another interpretation of the topology on $\mathcal{M}_\xi$. Let $\tilde{X}_\infty=\tilde{X}\cup\{\infty\}$, that is, we add a point $\infty$ to $\tilde{X}$; the topology on $\tilde{X}_\infty$ is generated by the open sets of $\tilde{X}$ and sets $N\cup \{\infty\}$, where $N$ is an open neighborhood of $\infty$ with respect to $\xi$ (in the sense of Section \ref{sigmacw}).

Write
\begin{eqnarray*}
 \mathcal{P}(\tilde{X}_\infty)&=&\{\gamma:[0,\infty]\to \tilde{X}_\infty\,|\,\gamma(\infty)=\infty\},
\end{eqnarray*}
which we topologize with the compact open topology. This is a usual path space with $\infty$ as the basepoint.
The space $\mathcal{C}_\xi$ of paths to infinity with respect to $\xi$ can be identified with the subspace of $\mathcal{P}(\tilde{X}_\infty)$ consisting of those $\gamma$ with $\gamma([0,\infty))\subset \tilde{X}$. Similarly, $\mathcal{M}_\xi$ can be identified with a subspace of $\Omega(\tilde{X}_\infty)=e^{-1}(\{\tilde{x_0}\})$, where $\tilde{x_0}\in p^{-1}(\{x_0\})\subset\tilde{X}$ and $e:\mathcal{P}(\tilde{X}_\infty)\to \tilde{X}_\infty$ the usual fibration. Note that $\mathcal{C}_\xi$ is a covering space of $\mathcal{C}_\xi(X)$ with covering group $\pi_1(X)$.
\end{remark}

Given $\gamma_0\in \mathcal{M}_\xi$, we want to examine the homotopy groups $\pi_k(\mathcal{M}_\xi,\gamma_0)$ for $k\geq 0$. For this let $g:(S^k,\ast)\to(\mathcal{M}_\xi,\gamma_0)$ be a map. It gives rise to a map $\phi_g:S^k\times [0,\infty)\to X$ by $\phi_g(x,t)=g(x)(t)$ and since $\phi_g(x,0)=x_0$, a map $\Phi_g:\R^{k+1}\to X$ such that $\Phi_g(x\cdot t)=\phi(x,t)$ for $x\in S^k\subset\R^{k+1}$. If we assume that $\ast\in S^k$ corresponds to $(1,0,\ldots,0)\in\R^{k+1}$, we have $\Phi_g(t,0,\ldots,0)=\gamma_0(t)$. Furthermore, if we lift $\Phi_g$ to a map $\tilde{\Phi}_g:\R^{k+1}\to\tilde{X}$, we get that $h_\xi\circ\tilde{\Phi}(x)\to\infty$, as $|x|\to\infty$. A homotopy between two maps $g_0,g_1:(S^k,\ast)\to(\mathcal{M}_\xi,\gamma_0)$ relative to the basepoint corresponds to a homotopy $\Phi:\R^{k+1}\times [0,1]\to X$ between $\Phi_{g_0}$ and $\Phi_{g_1}$ relative to $[0,\infty)\times \{0\}$ and such that $h_\xi\circ\tilde{\Phi}(x,s)\to\infty$ as $|x|\to\infty$ uniformly in $s\in[0,1]$, for a lifting $\tilde{\phi}$.

Now assume we have a sequence $(N_i)_{i\geq 0}$ of neighborhoods of $\infty$ with respect to $\xi$ such that $N_i\subset N_{i-1}$ for all $i$ and $\bigcap_{i\geq 0} N_i=\emptyset$. Let $\tilde{x}_0\in\tilde{X}$ be a lifting of $x_0\in X$. Also, let $\tilde{\gamma}_0:[0,\infty)\to \tilde{X}$ be the lifting of $\gamma_0$ with $\tilde{\gamma}_0(0)=\tilde{x}_0$. Pick a sequence $t_i>0$ such that $t_{i+1}>t_i$ for all $i\geq 0$ such that $\tilde{\gamma}_0(t)\in N_i$ for all $t\geq t_i$. This sequence exists by the definition of $\gamma_0\in\mathcal{M}_\xi$. We let
\begin{eqnarray*}
y_i&=&\tilde{\gamma}_0(t_i)\in N_i
\end{eqnarray*}
be the sequence of basepoints of $N_i$ for all $i\geq 0$. We get a natural homomorphism $\chi_i:\pi_k(\tilde{X},N_{i+1},y_{i+1})\to \pi_k(\tilde{X},N_i, y_i)$ induced by inclusion where the change of basepoint is done using the path $\tilde{\gamma}_0|[t_i,t_{i+1}]$. This gives rise to an inverse system $(\chi_i:\pi_k(\tilde{X},N_{i+1},y_{i+1})\to \pi_k(\tilde{X},N_i,y_i))$ for every $k\geq 0$. Note that this is an inverse system of pointed sets for $k\leq 1$.

For every $i\geq 0$ and $k\geq 0$ define $\varphi_i:\pi_k(\mathcal{M}_\xi,\gamma_0)\to\pi_{k+1}(\tilde{X},N_i,y_i)$ in the following way. Given $g:(S^k,\ast)\to (\mathcal{M}_\xi,\gamma_0)$, we define $\varphi_g:(D^{k+1},S^k,\ast)\to(\tilde{X},N_i,y_i)$ by $\varphi_g(x\cdot t)=\tilde{\Phi}_g(\lambda(x)\cdot x\cdot t)$ for $x\in S^k$ and $t\in[0,1]$, where $\lambda:S^k\to (0,\infty)$ is a map such that $\lambda(\ast)=t_i$, and for every $x\in S^k$ we have $g(x)(t)\in N_i$ for all $t\geq \lambda(x)$. It is clear that the homotopy class of $\varphi_g$ does not depend on the particular choice of $\lambda$. Basically we use the map $\tilde{\Phi}_g$ and restrict it to a large enough ball in $\R^{k+1}$.

This induces the map $\varphi_i:\pi_k(\mathcal{M}_\xi,\gamma_0)\to\pi_{k+1}(\tilde{X},N_i,y_i)$ and we clearly have $\chi_i\varphi_{i+1}=\varphi_i$. Thus we get an induced map
\begin{eqnarray*}
\varphi:\pi_k(\mathcal{M}_\xi,\gamma_0)&\longrightarrow & \liminv \,\pi_{k+1}(\tilde{X}, N_i,y_i)
\end{eqnarray*}
which is a group homomorphism for $k\geq 1$ and a map of pointed sets for $k=0$.

Next we define a map $\psi':\prod_{i\geq 0}\pi_{k+2}(\tilde{X},N_i,y_i)\to \pi_k(\mathcal{M}_\xi,\gamma_0)$, so let $a_i\in\pi_{k+2}(\tilde{X},N_i,y_i)$. Represent $a_i$ by a map $g_i:(D^{k+2},S^{k+1},\ast)\to(\tilde{X},N_i,y_i)$. Furthermore, let $G':\R^{k+1}\to \tilde{X}$ be given by $G'(x\cdot t)=\tilde{\gamma}_0(t)$ for $x\in S^k$ and $t\in[0,\infty)$. Let $B(t_i)$ be a small disc with center at $(-t_i,0)\in \R^{k+1}$ such that the $B(t_i)$ are pairwise disjoint. We use $(-t_i,0)$ as the center since we want to change $G'$ on the $B(t_i)$ without changing it on $[0,\infty)\times\{0\}\subset\R^{k+1}$. So homotop $G'$ to a map $G$ relative to $\R^{k+1}-\bigcup B'(t_i)$ such that $G$ is constant to $\tilde{\gamma}_0(t_i)$ on $B(t_i)$ for all $i\geq 0$. Here $B'(t_i)$ is a slightly bigger disc such that they are still pairwise disjoint.

If we restrict the map $g_i$ to $S^{k+1}$, we can think of this map as a map $\tilde{g}_i:(D^{k+1},S^k)\to (N_i,y_i)$. Now we can replace $G$ by a map $\tilde{G}:\R^{k+1}\to\tilde{X}$ such that $\tilde{G}|B(t_i)\equiv \tilde{g}_i$ and $\tilde{G}$ agrees with $G$ everywhere else. Clearly $\tilde{G}$ induces a map $g:(S^k,\ast)\to(\mathcal{M}_\xi,\gamma_0)$ and this defines a map
\begin{eqnarray*}
\psi':\prod_{i\geq 0}\pi_{k+2}(\tilde{X},N_i,y_i)&\longrightarrow & \pi_k(\mathcal{M}_\xi,\gamma_0).
\end{eqnarray*}
Note here that if $g'_i:(D^{k+2},S^{k+1},\ast)\to(\tilde{X},N_i,y_i)$ also represents $a_i$ for every $i\geq 0$, we get that $g_i|S^{k+1}$ is homotopic to $g'_i|S^{k+1}$ within $N_i$, so the resulting maps $g$ and $g'$ represent the same element in $\pi_k(\mathcal{M}_\xi,\gamma_0)$.
It is worth pointing out that $\tilde{G}$ is homotopic to $G$ relative to $[0,\infty)\times\{0\}$, but the resulting function $H:S^k\times [0,1]\to\mathcal{M}_\xi$ need not be continuous, since the maps $g_i$ are only null homotopic in $\tilde{X}$, but not necessarily with control.

Let us recall the definition of the derived limit for our inverse system. Two sequences $(a_i),(b_i)\in\prod_{i\geq 0}\pi_{k+2}(\tilde{X},N_i,y_i)$ are called equivalent, if there exists a sequence $(c_i)\in\prod_{i\geq 0}\pi_{k+2}(\tilde{X},N_i,y_i)$ such that $b_i=c_i\cdot a_i\cdot \chi_i(c_{i+1})$ for all $i\geq 0$. Then $\limone\,\pi_{k+2}(\tilde{X},N_i,y_i)$ is the set of equivalence classes. For $k\geq 1$ this has the structure of an abelian group, but for $k=0$ we only get a pointed set.

It is easy to see that $\psi'$ induces a map
\begin{eqnarray*}
\psi:\limone\,\pi_{k+2}(\tilde{X},N_i,y_i)&\longrightarrow&\pi_k(\mathcal{M}_\xi,\gamma_0)
\end{eqnarray*}
which is a homomorphism for $k\geq 1$ and a map of pointed sets for $k=0$.

\begin{proposition}\label{exactseq}
With the above notation there is a short exact sequence
\[ 1\to \limone\,\pi_{k+2}(\tilde{X},N_i,y_i)\stackrel{\psi}{\longrightarrow} \pi_k(\mathcal{M}_\xi,\gamma_0)\stackrel{\varphi}{\longrightarrow} \liminv\,\pi_{k+1}(\tilde{X},N_i,y_i)\to 1\]
which is a short exact sequence of groups in the case $k\geq 1$ and of pointed sets in the case $k=0$. If $k=0$, $\psi$ is also injective.
\end{proposition}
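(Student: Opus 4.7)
The plan is to verify this short exact sequence directly from the explicit formulas for $\varphi$ and $\psi$ given above, following the Milnor $\limone$ philosophy: treat $\mathcal{M}_\xi$ as a substitute for the homotopy inverse limit of the pairs $(\tilde{X},N_i)$, and model all compatibilities by geometric pieces of $\R^{k+1}$ cut out by the radii $t_0<t_1<\cdots$.

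\textbf{Surjectivity of $\varphi$.} Given $(a_i)\in\liminv\pi_{k+1}(\tilde{X},N_i,y_i)$, choose based representatives $f_i\colon(D^{k+1},S^k,\ast)\to(\tilde{X},N_i,y_i)$ of $a_i$, and, using the compatibility $\chi_i(a_{i+1})=a_i$, pick based homotopies $F_i$ from $f_i$ to a rescaled copy of $f_{i+1}$ inside $(\tilde{X},N_i,y_i)$. Decompose $\R^{k+1}$ into concentric regions whose boundary spheres meet the positive $x$-axis at the points $t_0<t_1<\cdots$. Place $f_0$ on the innermost disk and run $F_i$ on the $i$-th annulus, adjusting each homotopy so its basepoint track along the positive axis coincides with $\tilde{\gamma}_0$. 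The assembled map $\R^{k+1}\to\tilde{X}$ descends to an element $g\in\pi_k(\mathcal{M}_\xi,\gamma_0)$ (the height condition is forced because $f_i(S^k)\subset N_i$ and $\bigcap N_i=\emptyset$), and by construction $\varphi_i(g)=a_i$.

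\textbf{Exactness at $\pi_k(\mathcal{M}_\xi,\gamma_0)$.} We first show $\varphi\psi=0$. If $g=\psi((a_i))$ then $\tilde{\Phi}_g=\tilde{G}$ agrees with the radial map $G'$ outside the disjoint balls $B(t_i)$, and $G'$ trivially contributes $0$ to $\varphi_j(g)$. Insertion of the $i$-th bubble changes the relative class only by the class of $g_i|_{S^{k+1}}$ pushed into $\pi_{k+1}(\tilde{X},N_j,y_j)$; but $g_i|_{S^{k+1}}$ bounds the disk $g_i\colon D^{k+2}\to\tilde{X}$, so it is already zero in $\pi_{k+1}(\tilde{X})$ and hence in $\pi_{k+1}(\tilde{X},N_j,y_j)$. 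Conversely, if $\varphi_i(g)=0$ for all $i$, pick relative null-homotopies in $(\tilde{X},N_i,y_i)$ of the restrictions of $\tilde{\Phi}_g$ to balls of radius slightly larger than $t_i$. Use these, annulus by annulus, to deform $\tilde{\Phi}_g$ to a map that equals $G'$ outside disjoint small discs around $(-t_i,0)$. The leftover bubbles give classes $a_i\in\pi_{k+2}(\tilde{X},N_i,y_i)$, and the mismatch between consecutive levels of null-homotopies produces exactly the adjustment $a_i\mapsto c_i\cdot a_i\cdot \chi_i(c_{i+1})$ that is quotiented out in the definition of $\limone$, so $(a_i)$ represents a well-defined $\limone$-class with $\psi((a_i))=g$.

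\textbf{Injectivity of $\psi$ and the hard step.} If the $\limone$-class of $(a_i)$ is trivial, pick a sequence $(c_i)\in\prod_i\pi_{k+2}(\tilde{X},N_i,y_i)$ witnessing the equivalence to the zero sequence. Each $c_i$ provides an extra filling that can be inserted on an annular sleeve between $B(t_i)$ and $B(t_{i+1})$, matching the bubble $g_i$ on one side and the shifted bubble coming from $\chi_i(c_{i+1})$ on the other. Gluing these sleeves produces an explicit null-homotopy of $\psi((a_i))$ in $\pi_k(\mathcal{M}_\xi,\gamma_0)$; the same construction preserves the basepoint when $k=0$, giving injectivity as a map of pointed sets. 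The main obstacle will be the inclusion $\ker\varphi\subset\operatorname{im}\psi$: the chosen null-homotopies of $\tilde{\Phi}_g$ at different levels in general do not fit together, and organizing their incompatibilities into a single coherent $\limone$-representative---matching the definition of $\psi'$ exactly, including basepoint shifts along $\tilde{\gamma}_0$---is where the Milnor-type bookkeeping is concentrated and where all the technical care has to go.
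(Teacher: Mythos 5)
The paper omits the proof of this proposition entirely, citing it as standard with a pointer to \cite{geoghe}; so there is no argument in the paper to compare against. Your sketch follows the expected Milnor--$\limone$ route read through the explicit geometric definitions: decompose $\R^{k+1}$ into concentric annuli bounded by the radii $t_i$, assemble compatible towers into maps from $\R^{k+1}$ to prove surjectivity of $\varphi$, recognize the radial map $G'$ plus bubbles as the image of $\psi$, and extract $\limone$ data from mismatches between null-homotopies at successive levels. This is the correct strategy, and the surjectivity and $\varphi\psi = 0$ steps are convincingly done.

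Two places need more than acknowledgment. For the direction $\ker\varphi\subset\im\psi$ you are right that this is where the bookkeeping lives, but ``organizing incompatibilities into a single coherent $\limone$-representative'' glosses over the actual content: one must show that the difference of a chosen null-homotopy $L_i$ of $\varphi_i(g)$ with the restriction-and-basepoint-shift of $L_{i+1}$ gives a well-defined element of $\pi_{k+2}(\tilde{X},N_i,y_i)$, that changing the $L_i$ changes this family precisely by the $b_i=c_i\cdot a_i\cdot\chi_i(c_{i+1})$ relation, and that $\psi$ applied to the resulting family recovers $g$. More substantively, your treatment of the final clause (injectivity of $\psi$ for $k=0$) only establishes $\psi^{-1}(\ast)=\{\ast\}$, which is exactness as pointed sets and is strictly weaker than injectivity of a pointed-set map. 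The proposition isolates the $k=0$ injectivity as a separate assertion for exactly this reason. The standard fix is to note that $\limone\pi_2(\tilde{X},N_i,y_i)$, being a genuine abelian group, acts on $\pi_0(\mathcal{M}_\xi,\gamma_0)$ with $\psi$ the orbit map at $[\gamma_0]$ and orbits equal to the $\varphi$-fibers; then trivial isotropy of $[\gamma_0]$ (which is what your null-homotopy argument shows) upgrades to injectivity of $\psi$. Your sketch does not construct or invoke this action, so that step is missing.
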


The proof is standard and will be omitted, see also \cite{geoghe}. Notice the similarity between this sequence and (\ref{homlimseq}).

We can use this to give another equivalent definition for $\xi\in \Sigma^k(X)$.

\begin{proposition}
\label{anotherequdef}
Let $X$ be a finite connected CW-complex, $\xi\in H^1(X;\R)$ be nonzero and $k\geq 1$. Then $\xi\in \Sigma^k(X)$ if and only if $\mathcal{M}_\xi$ is $(k-1)$-connected.
\end{proposition}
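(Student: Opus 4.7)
The plan is to use Proposition \ref{exactseq}, which relates $\pi_\ast(\mathcal M_\xi)$ to $\liminv$ and $\limone$ of the tower $\{\pi_\ast(\tilde X,N_i,y_i)\}$, together with Proposition \ref{equivdef}(2), which identifies $\xi\in\Sigma^k(X)$ with the uniform pro-triviality of that tower at every level $l\leq k$. The background principle is that, for the countable towers arising here, a pro-trivial system has vanishing $\liminv$ and $\limone$, while conversely $\limone=0$ is equivalent to the Mittag-Leffler condition.

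For the forward direction, suppose $\xi\in\Sigma^k(X)$. Proposition \ref{equivdef}(2) supplies a uniform $\lambda\geq 0$ making the bonding maps $\pi_l(\tilde X,N_i)\to\pi_l(\tilde X,N_{i-\lambda})$ trivial for all $l\leq k$ and all $i$, so $\liminv\pi_l(\tilde X,N_i)=0=\limone\pi_l(\tilde X,N_i)$ for every $l\leq k$. In the sequence
\[
1\to\limone\pi_{m+2}(\tilde X,N_i)\to\pi_m(\mathcal M_\xi,\gamma_0)\to\liminv\pi_{m+1}(\tilde X,N_i)\to 1
\]
the right-hand term vanishes for every $m\leq k-1$, and the left-hand term vanishes immediately when $m+2\leq k$. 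The only case requiring extra work is $m=k-1$, where one needs $\limone\pi_{k+1}(\tilde X,N_i)=0$. I would verify Mittag-Leffler for $\{\pi_{k+1}(\tilde X,N_i)\}$ directly: any class is represented by $\alpha:(D^{k+1},S^k)\to(\tilde X,N_i)$ with $\alpha(S^k)\subset\tilde X^{(k)}$ after cellular approximation, and iterating the equivariant push-off homotopy $\tilde H$ of Proposition \ref{equivdef}(4) on $\alpha|_{S^k}$ produces a cylinder one can attach to $\alpha$ to obtain new representatives with boundary arbitrarily deep in the tower; keeping track of the bounded control $C$ of a single application of $\tilde H$ shows that the images $\pi_{k+1}(\tilde X,N_j)\to\pi_{k+1}(\tilde X,N_i)$ stabilize once $j\geq i+C$.

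For the converse, if $\mathcal M_\xi$ is $(k-1)$-connected then the exact sequence forces $\liminv\pi_{m+1}(\tilde X,N_i)=0$ and $\limone\pi_{m+2}(\tilde X,N_i)=0$ for every $m\leq k-1$. Hence at each level $l$ with $2\leq l\leq k$ the tower $\{\pi_l(\tilde X,N_i)\}$ is Mittag-Leffler with vanishing $\liminv$; writing $B_i\subset\pi_l(\tilde X,N_i)$ for the stable image under the bonding maps, the restricted maps $B_{i+1}\to B_i$ are surjective, so $\liminv B_i$ surjects onto each $B_i$, and since $\liminv B_i=\liminv\pi_l(\tilde X,N_i)=0$ we conclude $B_i=0$. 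The case $l=1$ (which by simple-connectivity of $\tilde X$ is a statement about path-components of the $N_i$) is handled separately from $\liminv\pi_1=0$ using the path-connectedness of $\mathcal M_\xi$ to merge components of deep neighborhoods into the basepoint component. This verifies condition (3) of Proposition \ref{equivdef} at every $l\leq k$, yielding $\xi\in\Sigma^k(X)$.

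The main obstacle is the off-by-one in the forward direction: $\Sigma^k(X)$ gives pro-triviality at levels $\leq k$, but the exact sequence at $m=k-1$ requires information at level $k+1$. Bridging this via the cellular push-off with explicit control bookkeeping is the substantive geometric step; the converse is then a largely formal Mittag-Leffler argument once the exact sequence is in hand.
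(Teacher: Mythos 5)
Your proposal is correct and follows essentially the same route as the paper: both directions go through the $\liminv$--$\limone$ exact sequence of Proposition \ref{exactseq}, and the substantive point — that the forward direction additionally requires $\limone\pi_{k+1}(\tilde X,N)=0$, which does not follow formally from $\xi\in\Sigma^k(X)$ — is resolved exactly as in the paper, by using the bounded-control push-off homotopy of Proposition \ref{equivdef}(4) to establish semi-stability (Mittag--Leffler) of $\{\pi_{k+1}(\tilde X,N)\}$. Your converse merely spells out the Mittag--Leffler bookkeeping that the paper's one-line statement leaves implicit.
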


\begin{proof}
If $\mathcal{M}_\xi$ is $(k-1)$-connected, then by Proposition \ref{exactseq} we get that the inverse system $\{\pi_l(\tilde{X},N)\}$ is pro-trivial for $l\leq k$ which gives $\xi\in \Sigma^k(X)$ by Proposition \ref{equivdef}.

To get the other direction, we have to worry about $\limone\,\pi_{k+1}(\tilde{X},N_i)$. But by the next lemma $\{\pi_{k+1}(\tilde{X},N)\}$ is semi-stable, so the $\limone$-term vanishes. It follows from Proposition \ref{exactseq} that $\mathcal{M}_\xi$ is $(k-1)$-connected.
\end{proof}

\begin{lemma}
Let $X$ be a finite connected CW-complex, $\xi\in H^1(X;\R)$ be nonzero and $h_\xi:\tilde{X}\to\R$ a height function. If $\xi\in \Sigma^k(X)$, then $\{\pi_{k+1}(\tilde{X},N)\}$ is semi-stable.
\end{lemma}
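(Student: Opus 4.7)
The plan is to prove semi-stability directly from the cellular pushing homotopy on $\tilde{X}^{(k)}$ supplied by $\xi\in\Sigma^k(X)$. Recall that the inverse system $\{\pi_{k+1}(\tilde{X},N_j)\}$ is semi-stable provided that for every $j$ there exists $j'\geq j$ such that for every $j''\geq j'$ the images of $\pi_{k+1}(\tilde{X},N_{j''})$ and $\pi_{k+1}(\tilde{X},N_{j'})$ in $\pi_{k+1}(\tilde{X},N_j)$ coincide. Only the non-trivial inclusion requires work: given a class $\alpha$ represented by $f\colon(D^{k+1},S^k)\to(\tilde{X},N_{j'})$ and any $j''\geq j'$, I must realize $\alpha$ in the image of $\pi_{k+1}(\tilde{X},N_{j''})$.

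First I would invoke Proposition \ref{equivdef}(4) together with Remark \ref{arblarge} to produce, for any prescribed $\varepsilon>0$, an equivariant cellular homotopy $\tilde{H}\colon\tilde{X}^{(k)}\times[0,1]\to\tilde{X}$ with $\tilde{H}_0=\mathrm{incl}$ and $h_\xi(\tilde{H}_1(\tilde{x}))-h_\xi(\tilde{x})\geq\varepsilon$. The key additional observation is that cellularity of $\tilde{H}$ together with cocompactness of the $\pi_1(X)$-action on $\tilde{X}^{(k)}$ yields a uniform control constant $C\geq 0$, independent of the number of iterations, such that $h_\xi(\tilde{H}_t(\tilde{x}))\geq h_\xi(\tilde{x})-C$ for all $t$ and $\tilde{x}$ (each iteration starts no lower than the previous endpoint, so the global floor $h_\xi(\tilde{x})-C$ persists). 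Set $j':=j+C$. Given $j''\geq j'$, cellularly approximate $f$ so that $f(S^k)\subset\tilde{X}^{(k)}$, choose $\tilde{H}$ with $\varepsilon>j''-j'$, and define $G\colon S^k\times[0,1]\to\tilde{X}$ by $G_s(x):=\tilde{H}(f(x),s)$. Then $G_0=f|_{S^k}$, $G_1(S^k)\subset N_{j''}$, and throughout $G_s(S^k)\subset N_{j'-C}=N_j$. The homotopy extension property extends $G$ to $F\colon D^{k+1}\times[0,1]\to\tilde{X}$ with $F_0=f$; then $F_1\colon(D^{k+1},S^k)\to(\tilde{X},N_{j''})$ represents a class in $\pi_{k+1}(\tilde{X},N_{j''})$, and $F$ is itself a homotopy of pairs $(D^{k+1},S^k)\to(\tilde{X},N_j)$, so $[F_1]\mapsto[\alpha]$ under $\pi_{k+1}(\tilde{X},N_{j''})\to\pi_{k+1}(\tilde{X},N_j)$.

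The main obstacle I anticipate is bookkeeping basepoints, since the inverse system's bonding maps shift basepoints along the chosen path $\tilde{\gamma}_0$, while the construction above carries the basepoint $y_{j'}\in N_{j'}$ to $\tilde{H}_1(y_{j'})$, which need not equal $y_{j''}$. I would resolve this by exploiting the freedom in defining $\tilde{H}$ on $\tilde{X}^{(0)}$ (where any equivariant cellular choice satisfying the height condition is permitted) and arranging $\tilde{H}$ to move the 0-cell containing $y_{j'}$ along a subarc of $\tilde{\gamma}_0$ itself. After suitable iteration, $\tilde{H}_1(y_{j'})$ then lies on $\tilde{\gamma}_0$ past $y_{j''}$, and the basepoint change between $\tilde{H}_1(y_{j'})$ and $y_{j''}$ uses the corresponding subarc of $\tilde{\gamma}_0$ within $N_{j''}$, rendering the resulting identification canonical and compatible with the bonding maps of the inverse system.
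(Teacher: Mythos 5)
Your proof is correct and follows essentially the same approach as the paper, which merely says (in one sentence) that the homotopy from Proposition \ref{equivdef}(4) pushes any $k$-sphere in $N$ arbitrarily far out while staying in $N'$ slightly larger than $N$, with the depth constant depending on $H$ only. You correctly identify that the control constant $C$ comes from equivariance and cocompactness, that iteration does not increase $C$, that one can therefore set $j'=j+C$, and that the homotopy extension property then produces a representative with boundary in $N_{j''}$ via a homotopy of pairs that never leaves $N_j$. Your final paragraph on basepoints is addressing a genuine subtlety that the paper's terse proof leaves implicit, and the idea (arrange $\tilde{H}$, via the freedom in its construction on $\tilde{X}^{(0)}$, to carry the basepoint along a subarc of the reference path $\tilde{\gamma}_0$, so that the basepoint-change paths of the inverse system and of the pushing homotopy agree in $N_{j''}$) is the right way to close it; just note that $y_{j'}$ itself need not be a vertex, so one should phrase this in terms of the cell carrying $y_{j'}$ (or replace $y_{j'}$ by a nearby vertex on $\tilde{\gamma}_0$), rather than "the 0-cell containing $y_{j'}$."
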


\begin{proof}
Use the homotopy from Proposition \ref{equivdef}(4) to push any $k$-sphere in $N$ arbitrarily far away, this gives semi-stability as the pushing may be done in $N'$ slightly bigger than $N$ (depends on $H$ only).
\end{proof}

\begin{appendix}

\section{Sigma invariants of chain complexes}\label{appendicitis}

In this appendix we show how $\Sigma^k(C)$ is related to criteria involving chain homotopies on $C$.

If the chain complex $C$ consists of flat $R$-modules, we have the following criterion.

\begin{proposition}\label{critone}
Let $C$ be a chain complex of flat $R$-modules and $n$ a non-negative integer. Then the following are equivalent.
\begin{enumerate}
\item $C$ is of finite $n$-type.
\item For every index set $J$, the natural map $H_k(C,\prod_J R)\to \prod_J H_k(C)$ is an isomorphism for $k<n$ and an epimorphism for $k=n$.
\end{enumerate}
\end{proposition}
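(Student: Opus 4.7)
My plan is to use the reformulation of (1) noted right after the definition of finite $n$-type: the chain complex $C$ is of finite $n$-type if and only if there exists a free $R$-chain complex $D$ together with a quasi-isomorphism $f : D \to C$ such that $D_i$ is finitely generated for $i \leq n$.

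For the direction (1) $\Rightarrow$ (2) I would take such $D$ and $f$ and argue in two steps. First, because $D$ and $C$ both consist of flat $R$-modules, the mapping cone $\mathrm{cone}(f)$ is a bounded-below acyclic chain complex of flat modules, and tensoring an acyclic bounded-below complex of flats with an arbitrary module preserves acyclicity; hence $f \otimes \mathrm{id}$ is again a quasi-isomorphism. This identifies $H_k(C \otimes \prod_J R)$ with $H_k(D \otimes \prod_J R)$, and separately $\prod_J H_k(D) \cong \prod_J H_k(C)$, so it suffices to analyze the natural chain map $\theta : D \otimes_R \prod_J R \to \prod_J D$. In degrees $i \leq n$ the module $D_i$ is finitely generated free, so $\theta_i$ is an isomorphism; in degree $n+1$ the free module $D_{n+1}$ need not be finitely generated, but the map is still injective. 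A short diagram chase then produces the isomorphism on $H_k$ for $k < n$ and the epimorphism on $H_n$.

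For the direction (2) $\Rightarrow$ (1), I will construct $D$ and $f$ by induction on degree, in the spirit of Bieri--Eckmann. The base case is the claim that $H_0(C)$ is a finitely generated $R$-module. Applying (2) with $J = H_0(C)$, the tautological element $(x)_{x \in J} \in \prod_J H_0(C)$ lifts to some class $[z] \in H_0(C \otimes \prod_J R)$, and writing $z = \sum_{i=1}^N c_i \otimes \phi_i$ with $\phi_i \in \prod_J R$ as a finite sum shows that every $j \in H_0(C)$ equals $\sum_i \phi_i(j)[c_i]$, so the finitely many classes $[c_1], \dots, [c_N]$ generate $H_0(C)$. For the inductive step, suppose I have built a finitely generated free complex $D^{(k)}$ concentrated in degrees $\leq k$ together with a chain map $f^{(k)}: D^{(k)} \to C$ that is an isomorphism on $H_i$ for $i < k$ and an epimorphism on $H_k$. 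I would then consider $\mathrm{cone}(f^{(k)})$, whose homology vanishes in degrees $\leq k$, and show that it inherits condition (2) in degree $k+1 \leq n$. This inheritance uses the degreewise-split short exact sequence of flat complexes $0 \to C \to \mathrm{cone}(f^{(k)}) \to D^{(k)}[1] \to 0$, which remains exact after applying both $- \otimes_R \prod_J R$ and $\prod_J(-)$, combined with the fact that $D^{(k)}$ trivially satisfies the analogue of (2). Applying the base-case finite-generation argument to $\mathrm{cone}(f^{(k)})$ in degree $k+1$ produces a finite generating set of $H_{k+1}(\mathrm{cone}(f^{(k)}))$; lifting these generators gives the free summands in degree $k+1$ that extend $D^{(k)}$ to $D^{(k+1)}$ and yield a $(k+1)$-equivalence. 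Iterating through $k = n-1$ completes the construction.

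The main obstacle will be the inductive step in (2) $\Rightarrow$ (1), namely the verification that condition (2) is inherited by the mapping cone $\mathrm{cone}(f^{(k)})$ in the appropriate degree. This demands careful bookkeeping of how the two functors $-\otimes_R \prod_J R$ and $\prod_J(-)$ interact with the long exact sequences induced by the short exact sequence above, followed by a five-lemma comparison using that $D^{(k)}$ is finitely generated free in the relevant range. Once this inheritance is in place, the base-case extraction applies directly at each stage, and the rest of the construction is routine.
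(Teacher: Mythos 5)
The paper does not actually prove this proposition: it cites Brown's 1975 paper (\textit{Homological criteria for finiteness}) and asserts that the argument there goes through with ``projective'' relaxed to ``flat.'' So you are reconstructing Brown's proof from scratch. Your direction (1) $\Rightarrow$ (2) is correct and standard: replace $C$ by a quasi-isomorphic free complex $D$ finitely generated up to degree $n$, observe that the cone of this quasi-isomorphism is a bounded-below acyclic flat complex and hence remains acyclic after tensoring, and then compare $\theta\colon D\otimes_R\prod_J R\to\prod_J D$ degreewise (isomorphism in degrees $\leq n$, injective in degree $n+1$) to get the stated behaviour on homology.

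Direction (2) $\Rightarrow$ (1) has a genuine gap at the sentence ``the base-case extraction applies directly at each stage.'' The degree-$0$ extraction works because every element of $C_0$ is a cycle, so writing $z=\sum_i c_i\otimes\phi_i$ one reads off generators $[c_1],\dots,[c_N]$ of $H_0(C)$. In degree $k+1$ of $E=\mathrm{cone}(f^{(k)})$ this breaks: a cycle $z=\sum_i e_i\otimes\phi_i$ in $(E\otimes\prod_J R)_{k+1}$ need not have cycle summands $e_i$, so there is no class $[e_i]$. What the argument actually yields is that $H_{k+1}(E)$ lies in the image of the finitely generated module $\sum_i Re_i$ inside $E_{k+1}/B_{k+1}(E)$, and over a non-Noetherian $R$ that is not enough for finite generation. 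The missing ingredient --- which is the heart of Brown's proof --- is the identification $H_{k+1}(E\otimes\prod_J R)\cong H_{k+1}(E)\otimes\prod_J R$, valid whenever $E$ is a bounded-below flat complex with $H_i(E)=0$ for $i\leq k$. Concretely: since $H_i(E)=0$ for $i\leq k$, the cycles $Z_i(E)$ are flat for $i\leq k$, so the exact sequence $0\to Z_{k+1}(E)\to E_{k+1}\to Z_k(E)\to 0$ stays exact after $\otimes\prod_J R$, which identifies $Z_{k+1}(E\otimes\prod_J R)$ with $Z_{k+1}(E)\otimes\prod_J R$, and right-exactness of $\otimes$ then gives $H_{k+1}(E\otimes\prod_J R)\cong H_{k+1}(E)\otimes\prod_J R$. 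After that, the module-level lemma (an $R$-module $M$ is finitely generated iff $M\otimes\prod_J R\to\prod_J M$ is onto for all $J$) applies verbatim and the rest of your construction is fine. Incidentally, you have also misidentified where the difficulty lies: the inheritance of condition (2) by the cone is the routine five-lemma step, while the finite generation of the cone's homology is the real work.
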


This is basically \cite[Thm.2]{brown1}, but we allow $C$ to be flat and not necessarily projective so we only have a homology criterion. The proof goes through for flat modules.

By a filtration of $C$ we mean a family $\{C^\alpha\}_{\alpha\in\mathcal{A}}$ of sub-chain complexes where $\mathcal{A}$ is a directed set, $C^\alpha\subset C^\beta$ for $\alpha\leq \beta$ and $C=\bigcup C^\alpha$.

Given a filtration, we define $D^\alpha=C/C^\alpha$.

The analogue of \cite[Thm.2.2]{brown2} is

\begin{theorem}\label{browncrit}
Let $C$ be an $R$-chain complex with a filtration $\{C^\alpha\}_{\alpha\in\mathcal{A}}$ of finite $n$-type complexes $C^\alpha$ of flat $R$-modules. Then $C$ is of finite $n$-type if and only if the direct system $\{H_i(D^\alpha)\}$ is essentially trivial for $i\leq n$.
\end{theorem}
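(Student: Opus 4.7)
My plan is to follow Brown's strategy \cite{brown2}, invoking Proposition \ref{critone} as the working criterion for finite $n$-type and exploiting the compatibility of tensor products, homology, and filtered colimits. The two directions require rather different arguments.

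For necessity, I would assume $C$ is of finite $n$-type, so there is an $n$-equivalence $f: P \to C$ with $P$ finitely generated projective in degrees $\leq n$. Because each $P_i$ in that range is finitely generated and $C = \bigcup_\alpha C^\alpha$ is a directed union, $f$ will factor (in degrees $\leq n$) through some $C^{\alpha_1}$. The long exact sequence of $0 \to C^{\alpha_1} \to C \to D^{\alpha_1} \to 0$ then forces $H_i(C) \to H_i(D^{\alpha_1})$ to vanish for $i \leq n$. To upgrade this pointwise vanishing to essential triviality of the whole direct system, I would fix an arbitrary $\alpha_0 \in \mathcal{A}$ and compare the long exact sequences for $\alpha_0$ and a suitable $\beta \geq \alpha_0, \alpha_1$. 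A class in $H_i(D^{\alpha_0})$ is detected by its image in the finitely generated $R$-module $H_{i-1}(C^{\alpha_0})$; since $H_{i-1}(C) = \varinjlim_\alpha H_{i-1}(C^\alpha)$, I can enlarge $\beta$ so that every such image that dies in $H_{i-1}(C)$ already dies in $H_{i-1}(C^\beta)$. Combined with the vanishing of $H_i(C) \to H_i(D^\beta)$, this forces $H_i(D^{\alpha_0}) \to H_i(D^\beta)$ to be zero.

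For sufficiency I would apply Proposition \ref{critone}. Fix an index set $J$. Since tensor products and homology commute with filtered colimits and $C = \varinjlim_\alpha C^\alpha$ with each $C^\alpha$ flat, I have
\[
H_k\bigl(C \otimes_R \textstyle\prod_J R\bigr) \;=\; \varinjlim_\alpha H_k\bigl(C^\alpha \otimes_R \textstyle\prod_J R\bigr).
\]
Applying Proposition \ref{critone} to each $C^\alpha$ identifies the right-hand side with $\varinjlim_\alpha \prod_J H_k(C^\alpha)$ for $k < n$, and gives a surjection onto it for $k = n$. Since $\prod_J H_k(C) = \prod_J \varinjlim_\alpha H_k(C^\alpha)$, everything reduces to showing that the natural comparison
\[
\varinjlim_\alpha \textstyle\prod_J H_k(C^\alpha) \;\longrightarrow\; \textstyle\prod_J \varinjlim_\alpha H_k(C^\alpha)
\]
is iso for $k < n$ and epi for $k = n$. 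Here I would feed essential triviality of $\{H_i(D^\alpha)\}$ through the long exact sequence to extract a Mittag--Leffler style condition on the transition maps $H_k(C^\alpha) \to H_k(C^\beta)$: each element of $H_k(C^\alpha)$ has its fate in $H_k(C)$ already realized in some $H_k(C^\beta)$, so the system $\{H_k(C^\alpha)\}$ behaves like a constant system at $H_k(C)$ in the range $k \leq n$, which is exactly what is needed to interchange $\varinjlim$ and $\prod_J$.

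The main obstacle will be this interchange of $\varinjlim$ and $\prod_J$ in the sufficiency direction: without essential triviality the two do not commute, and translating the LES-vanishing condition on $\{H_i(D^\alpha)\}$ into the appropriate stability of $\{H_k(C^\alpha)\}$ requires careful degree-by-degree bookkeeping of boundary maps and image stabilization up through degree $n$.
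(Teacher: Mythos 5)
Your sufficiency argument is on the right track: reducing via Proposition \ref{critone} to the comparison $\varinjlim_\alpha \prod_J H_k(C^\alpha)\to\prod_J H_k(C)$ and extracting the required stability from essential triviality of $\{H_i(D^\alpha)\}$ through the long exact sequence of $0\to C^\alpha\to C\to D^\alpha\to 0$ does work. Concretely, essential triviality in degrees $k$ and $k+1$ yields, for each $\alpha$, a $\beta$ with $H_k(C^\beta)\to H_k(C)$ surjective and with $\ker(H_k(C^\alpha)\to H_k(C))$ already killed in $H_k(C^\beta)$, and these two properties are precisely what make the comparison an isomorphism for $k<n$ and an epimorphism for $k=n$.

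The gap is in the necessity direction, not where you anticipate. You assert that $H_{i-1}(C^{\alpha_0})$ is a finitely generated $R$-module and use this to find one $\beta$ killing all of $\ker(H_{i-1}(C^{\alpha_0})\to H_{i-1}(C))$ inside $H_{i-1}(C^\beta)$. But $R$ is not assumed Noetherian, and the homology of a finitely generated projective complex in a fixed degree need not be finitely generated (the cycles form only a submodule of a finitely generated module); even if $H_{i-1}(C^{\alpha_0})$ were finitely generated, what the argument actually requires is finite generation of the kernel, which again fails over non-Noetherian $R$. This matters: the application in Appendix \ref{appendicitis} is to $R=\Z G_\xi$, which is not Noetherian. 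A route avoiding finiteness entirely, symmetric with your sufficiency argument, is to apply the exact functors $\prod_J(-)$ and then $\varinjlim_\alpha$ to the long exact sequence of $0\to C^\alpha\to C\to D^\alpha\to 0$. In the resulting exact sequence the maps $\varinjlim_\alpha\prod_J H_i(C^\alpha)\to\prod_J H_i(C)$ are again the comparison maps, and when $C$ is of finite $n$-type they are epi for $i\leq n$ and iso for $i<n$ (combine Proposition \ref{critone} for $C$ with Proposition \ref{critone} for each $C^\alpha$, as in the sufficiency direction). Exactness then forces $\varinjlim_\alpha\prod_J H_i(D^\alpha)=0$ for all $J$ and all $i\leq n$, and this is equivalent to essential triviality of $\{H_i(D^\alpha)\}$; the non-obvious half of that equivalence is a diagonal argument over $J=\{\beta\geq\alpha_0\}$, choosing for each $\beta$ an element of $H_i(D^{\alpha_0})$ surviving into $H_i(D^\beta)$.
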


\begin{proof}
The proof is similar to the proof of \cite[Thm.2.2]{brown2}, but one has to use Proposition \ref{critone}. We omit the details.
\end{proof}

Let $G$ be a finitely generated group and $\xi:G\to\R$ a non-zero homomorphism. Let $C$ be a free $\Z G$-chain complex which is finitely generated in every degree. Given a valuation on $C$ extending $\xi$, we can define a subcomplex
\begin{eqnarray*}
C^\xi&=&\{x\in C\,|\,v(x)\geq 0\}
\end{eqnarray*}
Valuations are determined by their value on basis elements, so it is easy to see that $C^\xi$ is a finitely generated free chain complex over $\Z G_\xi$, compare \cite[Lemma 3.1]{bieren}. Given $g\in G$, we can also look at the subcomplex $gC^\xi\subset C$ which is isomorphic to $C^\xi$. Denote $D^g=C/gC^\xi$.

We use Theorem \ref{browncrit} to get

\begin{proposition}
Let $C$ be a free $\Z G$-chain complex which is finitely generated in every degree, $v$ a valuation extending $\xi$ and $n$ a non-negative integer. Then the following are equivalent.
\begin{enumerate}
\item $\xi\in \Sigma^n(C)$.
\item The direct system $\{H_i(D^g)\}$ is essentially trivial for $i\leq n$.
\item There exists a chain map $\varphi:C\to C$ chain homotopic to the identity such that $v(\varphi(x))>v(x)$ for all non-zero $x\in C_i$ with $i\leq n$.\qed
\end{enumerate}
\end{proposition}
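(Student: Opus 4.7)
The plan is to deduce (1) $\Leftrightarrow$ (2) directly from Theorem \ref{browncrit}, and to establish the equivalence with (3) by converting essential triviality into a controlled chain map and conversely.

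To apply Theorem \ref{browncrit}, I verify its hypotheses for the filtration $\{gC^\xi\}_{g\in G}$ of $C$, directed by $g\leq h \Leftrightarrow gC^\xi\subseteq hC^\xi \Leftrightarrow \xi(g)\geq\xi(h)$. Each $gC^\xi$ is a $\Z G_\xi$-subcomplex (left multiplication by $\Z G_\xi$ cannot decrease the valuation), is isomorphic as a $\Z G_\xi$-module to $C^\xi$, and is finitely generated free over $\Z G_\xi$ by \cite[Lemma 3.1]{bieren}; in particular it is flat and of finite $n$-type. Since $\xi(G)$ contains arbitrarily negative reals and each $x\in C$ has finite valuation, $\bigcup_g gC^\xi = C$, and Theorem \ref{browncrit} yields (1) $\Leftrightarrow$ (2).

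For (3) $\Rightarrow$ (2), the finiteness of the $\Z G$-basis in degrees $\leq n$ turns the strict inequalities $v(\varphi(x))>v(x)$ into a uniform $\varepsilon>0$ with $v(\varphi(x))\geq v(x)+\varepsilon$ on those degrees. Picking any $\Z G$-equivariant chain homotopy $K:\id\simeq\varphi$, the same finiteness produces a constant $\delta$ with $v(K(y))\geq v(y)-\delta$ for all $y$ in degrees $\leq n$. Then $K_m:=\sum_{j=0}^{m-1}K\varphi^j$ is a chain homotopy $\id\simeq\varphi^m$ satisfying the same bound on valuation loss. Given a class $[z]\in H_i(D^g)$ with lift $\tilde z\in C_i$ (so $\partial\tilde z\in gC^\xi$), fix $h$ with $\xi(h)\leq\xi(g)-\delta$ and $m$ large enough that $v(\varphi^m(\tilde z))\geq\xi(h)$; the identity $\tilde z-\partial K_m(\tilde z)=\varphi^m(\tilde z)+K_m(\partial\tilde z)$ then lies in $hC^\xi$, so $[z]$ maps to zero in $H_i(D^h)$.

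For (2) $\Rightarrow$ (3), the $\Z G$-equivariance of the filtration upgrades essential triviality into a uniform gap statement: given a single pair $(g_0,h_0)$ produced by essential triviality, translation by $a\in G$ yields pairs $(ag_0,ah_0)$ with the same gap $\mu=\xi(g_0)-\xi(h_0)\geq 0$, so every $g\in G$ admits a killer $h$ with $\xi(h)=\xi(g)-\mu$. Using this, I would construct $\varphi$ and $K$ inductively on the finite basis degree by degree. At degree $0$, for each basis element $x$ I choose $g_x$ with $\xi(g_x)$ so large that $\xi(g_x)-\mu>v(x)$ and apply the vanishing map to $[x]\in H_0(D^{g_x})$ to write $x=\partial K(x)+\varphi(x)$ with $v(\varphi(x))\geq\xi(g_x)-\mu>v(x)$. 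At degree $i>0$, the class of $x-K(\partial x)$ in $D^{g_x}$ is a cycle (its boundary $\varphi(\partial x)$ already has high valuation by induction), so the same vanishing produces $\varphi(x)$ with $v(\varphi(x))>v(x)$ and a compatible $K(x)$ once $\xi(g_x)$ is chosen large enough relative to the inductively accumulated data. The main obstacle is bookkeeping: ensuring the strict valuation gain survives the inductive loss of $\mu$ per degree, which is handled by processing each degree uniformly over its finite basis and choosing the starting gap at degree $0$ to dominate $n\mu$.
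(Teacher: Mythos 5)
Your proof is essentially correct and follows the route the paper signals: apply Theorem \ref{browncrit} to the filtration $\{gC^\xi\}_{g\in G}$ of $C$ by finitely generated free $\Z G_\xi$-subcomplexes (which are in particular flat and of finite $n$-type), giving (1)$\Leftrightarrow$(2), and then relate essential triviality to the existence of a valuation-raising chain map. The paper gives no written proof at all, so the equivalence with (3) is exactly what needs to be supplied, and your Bieri--Renz-style argument is the intended one. Two remarks on details you gloss over. First, in (3)$\Rightarrow$(2), the integer $m$ you choose depends on the lift $\tilde z$, but this is harmless: the target $h$ (with $\xi(h)\leq\xi(g)-\delta$) is fixed once and for all, and different classes are killed by different $K_m$'s; what matters is only that each $\tilde z-\partial K_m(\tilde z)$ lands in $hC^\xi$, so the single map $H_i(D^g)\to H_i(D^h)$ vanishes. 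Second, in (2)$\Rightarrow$(3), the translation-equivariance of the filtration does give the uniform gap $\mu$ (take $\mu=\max_{i\leq n}\mu_i$, and note that enlarging $h$ only helps since the direct system factors through intermediate terms). But the inductive bookkeeping you defer has a genuine wrinkle when $\xi(G)$ is discrete (say $\xi(G)=c\Z$): you need the interval $\bigl(v(x)+\mu,\,v(\varphi(\partial x))\bigr]$ to actually contain a value of $\xi(G)$, which requires the inductive valuation gain at degree $i-1$ to exceed $\mu$ by at least $c$, not merely to be positive. This is repaired by the same device you already use -- start with a gap at degree $0$ that dominates $(n+1)(\mu+c)$ rather than $n\mu$ -- and your acknowledgement that the gap must be engineered to survive the loss per degree is the right idea; just be aware that strict positivity of the residual gain is not quite enough when $\xi$ has discrete image.
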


\end{appendix}


\begin{thebibliography}{99}

\bibliographystyle{amsalpha}







\bibitem{besbra}M. Bestvina, N. Brady, \textit{Morse theory and finiteness properties of groups}, Invent.\ Math.\ 129 (1997), 445-470.
\bibitem{bieri}R. Bieri, \textit{Finiteness length and connectivity length for groups}, in: Geometric group theory down under (Canberra, 1996),  9-22, de Gruyter, Berlin, 1999.
\bibitem{biegeo}R. Bieri, R. Geoghegan, \textit{Connectivity properties of group actions on non-positively curved spaces}, Mem. Amer. Math. Soc. 161 (2003),  no. 765.
\bibitem{bigeko}R. Bieri, R. Geoghegan, D. Kochloukova, \textit{The Sigma invariants of Thompson's group $F$}, preprint.
\bibitem{binest}R. Bieri, W. Neumann, R. Strebel, \textit{A geometric invariant of discrete groups}, Invent. Math. 90 (1987), 451-477.
\bibitem{bieren}R. Bieri, B. Renz, \textit{Valuations on free resolutions and higher geometric invariants of groups}, Comment.\ Math.\ Helv.\ 63 (1988), 464-497.
\bibitem{brown1}K. Brown, \textit{Homological criteria for finiteness},  Comment.\ Math.\ Helv.\ 50 (1975), 129-135.
\bibitem{brown}K. Brown, \textit{Cohomology of groups}, Springer-Verlag, New York-Berlin, 1982.
\bibitem{brown2}K. Brown, \textit{Finiteness properties of groups}, J. Pure Appl.\ Algebra 44 (1987), 45-75.
\bibitem{farber} M. Farber, \textit{Zeros of closed 1-forms, homoclinic orbits and Lusternik-Schnirelman theory},  Topol.\ Methods Nonlinear Anal.\  19 (2002), 123-152.
\bibitem{farbe4} M. Farber, \textit{Lusternik-Schnirelman theory and dynamics}, "Lusternik-Schnirelmann Category and Related Topics", Contemporary Mathematics, ~ 316(2002), 95 - 111.
\bibitem{farbook} M. Farber, \textit{Topology of closed one-forms}, Mathematical Surveys and Monographs, AMS, 2004.
\bibitem{farkap} M. Farber, T. Kappeler, \textit{Lusternik-Schnirelman theory and dynamics, II},  Proc. Steklov Inst. Math.  2004,  no. 4 (247), 232--245.
\bibitem{farscm}M. Farber, D. Sch\"utz, \textit{Moving homology classes to infinity}, Forum Math. 19 (2007), 281-296.
\bibitem{farsch}M. Farber, D. Sch\"utz, \textit{Cohomological estimates for $cat(X,\xi)$}, Geom.\ Top. 11 (2007), 1255-1289.
\bibitem{farsce}M. Farber, D. Sch\"utz, \textit{Homological category weights and estimates for $\cat^1(X,\xi)$}, J. Eur. Math. Soc. 10 (2008), 243-266.
\bibitem{fasc}M. Farber, D. Sch\"utz, \textit{Closed 1-forms in topology and dynamics}, preprint.
\bibitem{geoghe}R. Geoghegan, \textit{Topological methods in group theory}, Springer-Verlag, New York, 2008.
\bibitem{harkoc}J. Harlander, D. Kochloukova, \textit{The $\Sigma^3$-conjecture for metabelian groups},  J. London Math. Soc. (2)  67 (2003), 609-625.
\bibitem{latour}F. Latour, \textit{Existence de 1-formes ferm\'ees non singuli\`eres dans une classe de cohomologie de de Rham}, Publ. IHES 80 (1994), 135-194.


\bibitem{memewy}J. Meier, H. Meinert, L. van Wyk, \textit{Higher generation subgroup sets and the $\Sigma$-invariants of graph groups}, Comment.\ Math.\ Helv.\ 73 (1998), 22-44.


\bibitem {N1}  S. P. Novikov,
\textit{ Multi-valued functions and functionals. An analogue of Morse theory},
 Soviet Math. Doklady, \textbf{24}(1981),  pp. 222--226.

\bibitem{noviko}S.P. Novikov, \textit{The Hamiltonian formalism and a multi-valued analogue of Morse theory}, Russian Math. Surveys 37 (1982), 1-56.


\bibitem{N4} S. P. Novikov, \textit{Variational methods and periodic solutions of equations
of Kirchhoff type, II}, Functional Analysis and Its Applications,
\textbf{15}(1981), pp. 263 - 274.


\bibitem{NSm} S. P. Novikov, I. Smel'tser, \textit{Periodic solutions of the
Kirhhoff equations for the free motions of a rigid body in a liquid, and
extended Lusternik-Schnirelman-Morse theory, I} Functional Analysis and Its
Applications, \textbf{15}(1981), pp. 197 - 207.





\bibitem{ranims}A. Ranicki, \textit{The algebraic theory of finiteness obstruction}, Math.\ Scand.\ 57 (1985), 105-126.
\bibitem{schfin}D. Sch\"utz, \textit{Finite domination, Novikov homology and nonsingular closed 1-forms}, Math.\ Z.  252 (2006), 623-654.
\bibitem{sikora}J.-Cl. Sikorav, \textit{Th\`ese}, Universit\'e Paris-Sud, 1987.




\bibitem{spanie}E.H. Spanier, \textit{Algebraic topology}. McGraw-Hill Book Co., New York-Toronto, Ont.-London 1966.



\bibitem{usher}M. Usher, \textit{Spectral numbers in Floer theories}, preprint, available as \verb+arXiv:0709.1127+
\bibitem{wall}C.T.C. Wall, \textit{Finiteness conditions for ${\rm CW}$-complexes},
Ann.\ of Math.\ (2) 81 (1965), 56-69.
\bibitem{wall2}C.T.C. Wall, \textit{Finiteness conditions for CW-complexes II}, Proc.\ Roy.\ Soc.\ Ser.\ A 295 (1966), 129-139.
\end{thebibliography}
\end{document}